\documentclass[11pt]{amsart}
\usepackage{fullpage, style}
\usepackage{subfiles}
\usepackage{comment}
\newtheorem*{rep@theorem}{\rep@title}
\newcommand{\newreptheorem}[2]{%
\newenvironment{rep#1}[1]{%
 \def\rep@title{#2 \ref{##1}}%
 \begin{rep@theorem}}%
 {\end{rep@theorem}}}
\makeatother

\newreptheorem{theorem}{Theorem}
\newreptheorem{lemma}{Lemma}
\newreptheorem{conjecture}{Conjecture}

\title{Hyperbolicity of Staked Links and Lower Bounds on Their Volumes}

\author[C. Adams]{Colin Adams}
\address{Department of Mathematics, Williams College, Williamstown, MA 01267}
\email{cadams@williams.edu}

\author [F. Gomez-Paz]{Francisco Gomez-Paz}
\address{Department of Mathematics, MIT,  77 Massachusetts Avenue, Cambridge, MA 02139-4307} 
\email{pjgomez@mit.edu}

\author[J. Kang]{Jiachen Kang}
\address{Mathematics Department, Harvard University, 1 Oxford St, Cambridge, MA 02138}
\email{jkang@math.harvard.edu}

\author[L. Krause]{Lukas Krause}
\address{Schaffhauserstrasse 455, postal code 8052, Zurich, Switzerland}
\email{lukrau2002@gmail.com}

\begin{document}

\begin{abstract}
    We define a class of links in handlebodies called ``charm bracelets," which are a subset of staked links. We provide tools to construct infinitely many such hyperbolic links and bound the corresponding volumes from below in terms of volumes corresponding to the individual charms.
\end{abstract}

\maketitle

\section{Introduction}

 A staked link diagram is an immersion of a finite collection of circles in a projection surface with a finite number of double points, each with a choice of over or under for the strands at the crossing,  together with a choice of finitely many points on that surface that avoid the immersion, called ``stakes," with the projection considered up to isotopy and Reidemeister moves away from the stakes. Staked links were introduced in full generality in \cite{knotoids} as a subset of generalized knotoidal graphs, extending the class of ``tunnel links'' considered in \cite{kauffmanstaked}. 

In \cite{knotoids}, staked knots were realized topologically as knots in handlebodies as follows. Thicken the projection surface $F$ into $F \times I$. Then, the immersion becomes a link in $F \times I$. For each stake $s\in F$, remove the interior of a regular neighborhood of $s\times I$ from $F\times I$. The result of this process is the complement of a link in a handlebody. See Figure \ref{fig:handlebodyrealization} for an example. Note that every knot or link in a handlebody of genus at least one can be realized as a staked link.

\begin{figure}[htbp]
    \centering
    \includegraphics[scale=0.4]{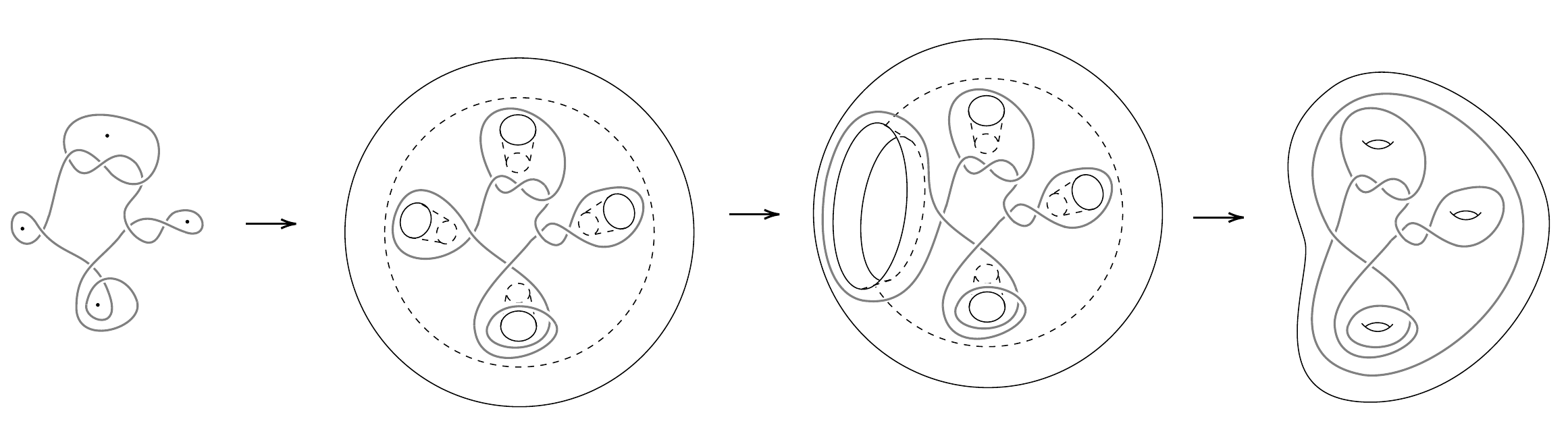}
    \caption{Realizing a staked knot as a knot in a handlebody.}
    \label{fig:handlebodyrealization}
\end{figure}

We say that a staked link is \textit{hyperbolic} if its complement in the handlebody given by the above construction possesses a hyperbolic metric with totally geodesic boundary. The Mostow-Prasad Rigidity Theorem asserts that if such a metric exists, it must be unique.

There has been previous work on hyperbolic staked links. In \cite{adams2023hyperbolicity}, the authors  provide necessary and sufficient conditions for an alternating staked link to be hyperbolic. They also provide a characterization of a family of non-alternating hyperbolic staked links (Corollary 5.1 of that paper). Extending methods from \cite{adams2021lower} and applying results that will appear in a companion paper \cite{AdamsKang}, we are able to obtain a much larger class of hyperbolic non-alternating staked knots and links, by considering a large class of staked links we call \textit{charm bracelets}.

A \textit{charm} is a part of a staked link diagram on the sphere that is contained in a disk on the sphere that has boundary avoiding crossings and stakes, such that the boundary is punctured at least twice by the link and the disk contains at least one crossing and one stake. A charm bracelet is any staked link diagram that can be partitioned into charms, the corresponding disks for which all intersect in a boundary point $t$ at the top and a boundary point $b$ at the bottom such that for each charm, at least one strand of the tangle ends on the left boundary of the disk and one on the right boundary. See Figure \ref{fig:charm}.

\begin{figure}[htbp]
    \centering
    \includegraphics[scale=0.6]{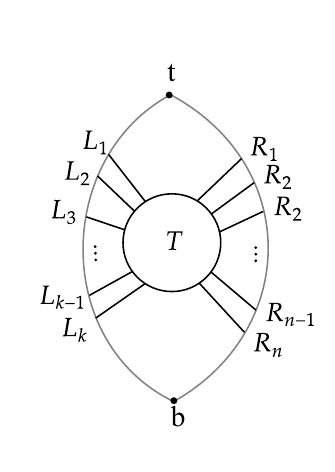}
    \caption{A charm $C$ from a charm bracelet.}
    \label{fig:charm}
\end{figure}

Given a charm $C$,  we can reflect $C$ across a vertical axis through the top and bottom boundary points in the projection plane to obtain $C^R$. Then, we can define the $2m$-replicant of $C$ to be the charm bracelet $B$ given by concatenating cyclically the $2m$ charms in order  $B = (C, C^R, C, C^R, \dots, C, C^R)$. In the case $B$ is hyperbolic, we say $C$ is $2m$-hyperbolic and define the $2m$-volume of $C$ to be $vol^{2m}(C) = \frac{1}{2m} vol(B)$.

The advantage to having $2m$-hyperbolic charms comes from the following version of Theorem 3.7 from \cite{adams2021lower} that applies to charm bracelets. 

\begin{theorem} If each charm in an even charm bracelet $B = (C_1, C_2, \dots , C_{2m})$ is $2m$-hyperbolic, then $B$ is hyperbolic and \[
        \vol(B) \geq \sum _{i= 1}^{2m} \vol^{2m}(C_i).
    \]  
\end{theorem}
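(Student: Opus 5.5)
We follow the strategy of Theorem 3.7 of \cite{adams2021lower}: cut the complement of $B$ along the surfaces separating consecutive charms, double each charm piece along its cutting surfaces, and compare volumes using guts and Agol--Storm--Thurston-type estimates.

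\emph{Setup.} Let $M_B$ denote the complement of $B$ in its handlebody. For each pair of consecutive charms $C_i, C_{i+1}$ there is a separating arc on the sphere running from $t$ to $b$. In the thickened picture this arc gives a properly embedded surface $S_i$ in $M_B$. Since at least one strand of each charm exits through the left boundary and one through the right, $S_i$ is punctured at least twice by the link; after thickening it is a twice-or-more-punctured disk (or annulus) meeting $\partial M_B$. The $2m$ surfaces $S_1,\dots,S_{2m}$ cut $M_B$ into pieces $N_1,\dots,N_{2m}$, one for each charm.

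\emph{Doubling.} The key observation is the following. Take $N_i$ and glue it, along the copies of the $S_j$ on its two sides, to copies of its mirror image $\overline{N_i}$, alternating as $N_i,\overline{N_i},N_i,\dots$ around the cycle. The result is exactly the complement of the $2m$-replicant of $C_i$. This works because reflecting across the vertical axis realizes $C^R$, and gluing $C$ to $C^R$ along a common boundary arc matches the strand endpoints. The number of charms is even, which is exactly what makes this alternating gluing close up consistently. By hypothesis that replicant is hyperbolic, with volume $2m\,\vol^{2m}(C_i)$.

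\emph{Step 1: hyperbolicity of $B$.} Hyperbolicity should come from the companion results of \cite{AdamsKang}. These say that gluing hyperbolic pieces along incompressible, boundary-incompressible surfaces (here, essential punctured disks) yields a hyperbolic manifold provided no essential spheres, disks, annuli or tori are created. We will show $M_B$ has no essential sphere, disk, annulus or torus. Put any such surface $F$ in normal position with respect to $\bigcup S_i$, minimizing intersections. The innermost and outermost pieces of $F \cap N_i$ then lift to the $2m$-replicant of $C_i$, and this lift would give an essential surface there, contradicting its hyperbolicity. We also need the $S_i$ themselves to be essential in $M_B$. This follows by the same lifting, since each $S_i$ lifts to a reflection surface in a hyperbolic replicant; the fixed set of an isometric involution is totally geodesic, hence essential.

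\emph{Step 2: volume bound.} In the replicant of $C_i$, the reflection symmetry swapping $C_i$ and $C_i^R$ is realized by an isometry by Mostow--Prasad rigidity, so the $S_j$ there are totally geodesic. Hence $N_i$, with the induced metric, has totally geodesic boundary on the $S$-faces, and
\[
\vol(N_i) = \frac{1}{2m}\cdot 2m\,\vol^{2m}(C_i) = \vol^{2m}(C_i).
\]
On the other hand, cutting $M_B$ along the essential surfaces $S_i$ and applying Agol--Storm--Thurston (the version with totally geodesic boundary used in \cite{adams2021lower}, via guts of $M_B \setminus \bigcup S_i$) gives
\[
\vol(M_B) \geq \sum_i \vol(\text{double-metrized } N_i),
\]
which is $\sum_i \vol^{2m}(C_i)$.

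\emph{Main obstacle.} The hardest step is the essentiality and lifting argument in Step 1. The $S_i$ meet the handlebody boundary and the stakes, so boundary-compressions and annuli running along stakes must be handled carefully, and this is where the input from \cite{AdamsKang} is needed. We would first try to reduce every case to a disk or annulus lying in a single $N_i$ and lift it to the replicant.
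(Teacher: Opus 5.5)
Your outline (cut $B$ into charm pieces, compare each piece with its $2m$-replicant, apply an Agol--Storm--Thurston estimate) is the strategy the paper has in mind when it calls this a version of Theorem 3.7 of \cite{adams2021lower}. The paper gives no further proof. Your Step 2, however, has a genuine gap.

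\textbf{The cutting surfaces.} The $S_i$ are not disjoint properly embedded surfaces. Each is $(\text{arc from } t \text{ to } b)\times I$, and its boundary contains $\{t\}\times I$ and $\{b\}\times I$. These lie in the interior of $M_B$, and all $2m$ of the $S_i$ share them (the axis $E$). So $\bigcup S_i$ is a $2$-complex, and AST, which concerns a properly embedded essential surface, does not apply to it. The properly embedded surfaces available are the punctured annuli $S_i\cup S_j$. Cutting along one made of opposite wedges, AST yields only $\mathrm{vol}(B)\ge\frac12\bigl(\mathrm{vol}(B')+\mathrm{vol}(B'')\bigr)$. Here $B'=(C_1,\dots,C_m,C_m^R,\dots,C_1^R)$ and $B''$ are the doubles of the two halves: new bracelets, not replicants.

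\textbf{The wrong volume.} If you literally double $N_i$ across its two faces you get $D^2(C_i)$, hence $\mathrm{vol}^2(C_i)$, not $\mathrm{vol}^{2m}(C_i)$. Moreover $D^2(C_i)$ need not be hyperbolic under your hypotheses: the charm of Figure \ref{charmdisk} is $6$-hyperbolic but not $2$-hyperbolic. The metric on $N_i$ inherited from $D^{2m}(C_i)$ has its faces meeting along $E$ at dihedral angle $\pi/m$, so it is not a manifold with totally geodesic boundary. Your final line, ``which is $\sum_i\mathrm{vol}^{2m}(C_i)$,'' silently switches from the doubled metric to this one, and that switch is the entire content of the bound.

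\textbf{One way to close the gap.} Since $\mathrm{vol}^{2m}(C^R)=\mathrm{vol}^{2m}(C)$, set $f(w)=\mathrm{vol}(w)-\sum_{c\in w}\mathrm{vol}^{2m}(c)$. For a cut along opposite wedges this satisfies $f(w)\ge\frac12\bigl(f(w')+f(w'')\bigr)$. Choose the cuts suitably, for instance shifting by one charm each time, so that a fixed fraction of the weight reaches replicants within $m-1$ steps. On replicants $f=0$, and $f$ is bounded on the finitely many bracelets built from the $C_i$ and $C_i^R$. Iterating therefore gives $f(B)\ge 0$. This scheme needs every intermediate bracelet to be hyperbolic, with essential opposite-wedge sheets.

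\textbf{Step 1.} Your Step 1 does not supply that hyperbolicity, and its key claim fails. The two faces of a piece can compress into the piece: for the $6$-hyperbolic charm of Figure \ref{charmdisk}, $\widehat{\Sigma}_i$ is compressible in $P_i$. Consequently:
\begin{itemize}
\item the innermost-disk argument (Lemma \ref{lemma:GraphLemma}, which assumes an essential pair) is unavailable;
\item annular pieces of tori and annuli, lying in pieces of different charms, do not lift to any single replicant;
\item a sheet formed by two adjacent wedges can be compressible in $M_B$, so ``essential by lifting'' is false in general.
\end{itemize}
Finally, \cite{AdamsKang} characterizes $2$-hyperbolic alternating charms; it is not a gluing theorem.
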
 

As an example of the application of this theorem, see Figure \ref{fig:DecompositionTheoremExample}.

\begin{figure}[htbp]
    \centering
    \includegraphics[scale=0.6]{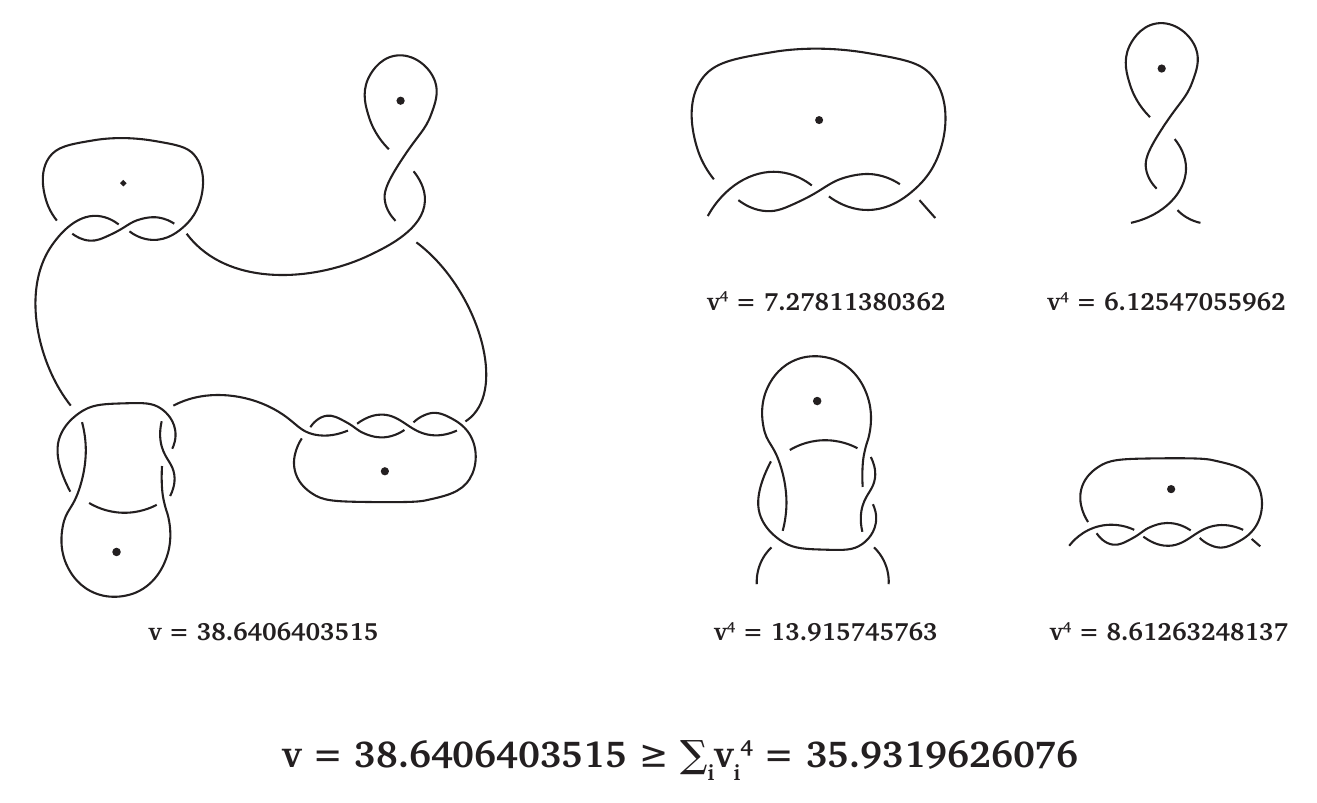}
    \caption{An example of the application of Theorem 1.1.}
    \label{fig:DecompositionTheoremExample}
\end{figure}

Note that there are no restrictions to alternating bracelets or charms. To be able to utilize this theorem, one would like to generate numerous charms that are $2m$-hyperbolic for various values of $m \geq 1$. In Section \ref{sect:prelim}, we provide preliminary definitions. In Section \ref{sect:hyperbolicity}, we show that if any charm is 2-hyperbolic, it is $2m$-hyperbolic for all $m\geq 1$. 

In  a separate paper \cite{AdamsKang}, it is shown that an alternating charm is 2-hyperbolic if and only if it satisfies a certain set of conditions that are easily determined from its diagram. These are then $2m$-hyperbolic for all $m \geq 1$ and can therefore be used to create hyperbolic charm bracelets with any even number of charms. 

This shows that a large variety of bracelets built from charms are hyperbolic, including many non-alternating bracelets, and provides a means to obtain lower bounds on their volumes.  

In previous work (c.f. \cite{Adamsproceedings}), we investigated charm bracelets where each charm was as simple as possible, which is to say a single-stranded charm with one crossing that generated a monogon with a single stake inside it. Such charm bracelets are called bongles, as in 
Figure \ref{fig:bonglesexamples}. 

\begin{figure}[htbp]
    \centering
    \includegraphics[scale=0.6]{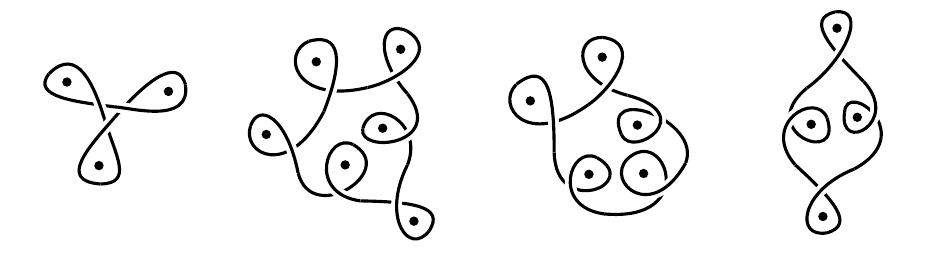}
    \caption{Examples of bongles.}
    \label{fig:bonglesexamples}
\end{figure}
We showed that a bongle is hyperbolic if and only if it is alternating, and determined examples of when hyperbolic bongles had the same volume. Here, we are extending to much more general charm bracelets.
\medskip

\noindent \textbf{Acknowledgments.} \\
This research was supported by Williams College, the Finnerty Fund, and NSF Grant DMS2241623 via the SMALL Undergraduate Research Program. We are grateful to Gregory Li, Reyna Li, Chloe Marple and Ziwei Tan, who are other members of the SMALL 2024 knot theory group, for many helpful discussions and suggestions.

\section{Preliminaries}\label{sect:prelim}

\begin{definition}
    A link $L$ in a compact, orientable 3-manifold $M$ is said to be \textit{tg-hyperbolic} if, after capping off all spherical boundaries with 3-balls and shaving off all torus boundaries, $M\setminus N(L)$ admits a complete hyperbolic metric such that its higher genus boundary components are totally geodesic. (Here and elsewhere, $N(X)$ denotes an open neighborhood of $X$.)
\end{definition}

\begin{definition}
    A \textit{staked link} $L$ in a surface $F$ is an embedding of a finite collection of circles into $F\times I$, together with a choice of a finite nonzero number of points on $F$, called stakes. We may take a projection $\pi(L)$ of $L$ onto $F$ in the usual way, such that $\pi(L)$ avoids the stakes. We consider this projection up to isotopy and Reidemeister moves. At the stakes there is a forbidden fourth move, which is depicted in Figure 1 of \cite{knotoids}, and which prevents strands of the projection from being slid across the stake points. 
\end{definition}

Note that it is equivalent to define a staked link as an embedding of $L$ into $(F\setminus U)\times I$, where $U$ is the disjoint union of small open disk neighborhoods of the points chosen as stakes. In this paper, $F$ will be $S^2$. In the case that $(F\setminus U)\times I \setminus N(L)$ is tg-hyperbolic, its hyperbolic volume is uniquely determined and denoted $\vol(L)$.

\begin{definition}
    Place a disk $D$ on a staked link diagram such that its boundary intersects the link projection transversely in two or more points and does not intersect the stakes. Also make sure $D$ contains at least one stake and at least one crossing. Avoiding $\pi(L) \cap \partial D$, choose two points $t$ and $b$ for top and bottom on $\partial D$, dividing it into two arcs, each of which contains at least one point of $\pi(L) \cap \partial D$. Let the places where the links intersects one of these arcs be called the ``left" endpoints and the places where the link intersects the other be called the ``right" endpoints. The intersection of the disk with the link projection and the stakes, together with a choice of left and right endpoints, is called a \textit{charm}. 
\end{definition}

A charm with $p$ left endpoints and $q$ right endpoints is called a $(p,q)$-stranded charm. If $p=q$ we will simply write that it is $p$-stranded.

We can realize a charm containing $r$ stakes as an embedding of a collection of arcs and circles into a thickened disk with $r$ open solid cylinders removed at the stakes. We also refer to this realization as a charm. The points $t$ and $b$ become vertical line segments on the boundary of the thickened disk. Let $L_C$ denote the restriction of the link $L$ to the charm.

Choosing the point $t$ on the disk boundary as a starting point, order the left and right strands by their proximity to the $t$. If charm $C_1$ has the same number of right endpoints as $C_2$ has left endpoints then we can concatenate them by identifying the right boundary of $C_1$ with the left boundary of $C_2$, identifying right strands of $C_1$ to left strands of $C_2$ in a way that respects their order. This is depicted in Figure \ref{fig:charmconcat}. We say $C_1$ and $C_2$ are concatenatable.

\begin{figure}
    \centering
    \includegraphics[width=0.7\linewidth]{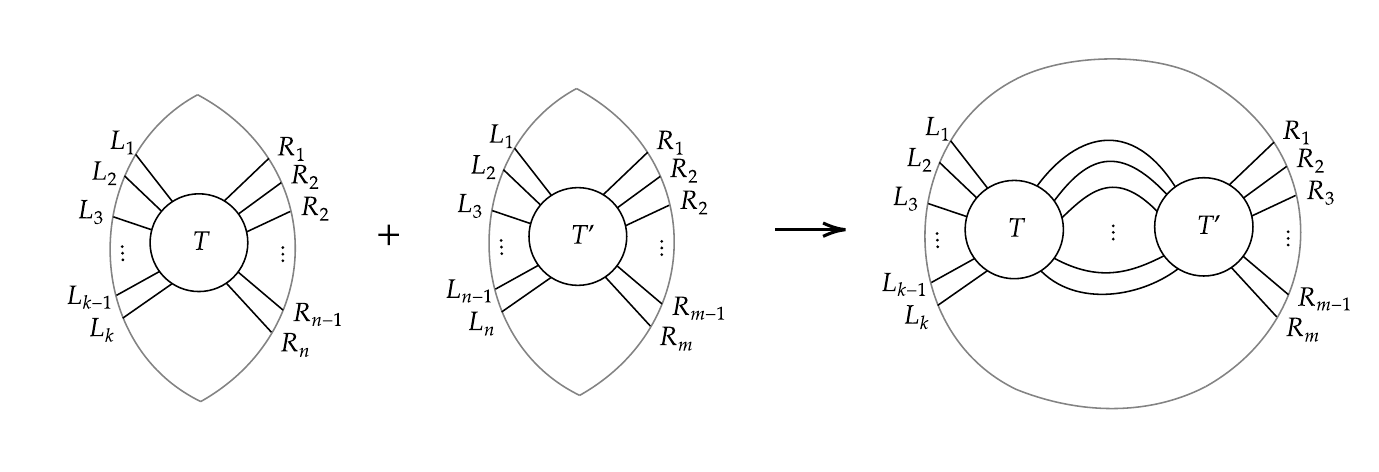}
    \caption{Concatenating two $n$-stranded charms.}
    \label{fig:charmconcat}
\end{figure}

\begin{definition}
    Given a sequence $w = (C_0, \dots ,C_{n-1})$ of charms such that $C_i$ and $C_{i+1}$ are concatenatable (mod n), we may obtain a staked link diagram in $S^2$ by concatenating $C_i$ and $C_{i+1}$ (subscripts interpreted mod $n$). The resulting staked link $L$ is the \textit{charm bracelet} corresponding to $w$. We call $w$ a \textit{bracelet word}.
\end{definition}

\begin{definition}
Given a tg-hyperbolic charm bracelet, the volume of the charm bracelet is the volume of the complement of the bracelet $L$ in the handlebody that is $S^2 \times I$ with open neighborhoods of the stakes removed.  
\end{definition}

\begin{definition}
    A \textit{$2n$-replicant} of a charm $C$ is the bracelet of length $2n$ in that charm and its reflection given by the vector $(C,C^R,C,C^R,\cdots,C,C^R)$. It is denoted $D^{2n}(C) = (H_{2n,C}, L_{2n,C})$, where $H_{2n,C}$ is the resulting handlebody that comes from removing open neighborhoods of the stakes in $S^2 \times I$ and $L_{2n,C}$ is the resulting link in that handlebody.  See Figure \ref{fig:replicant examples} for examples. 

    \begin{figure}
    \centering
    \includegraphics[scale=0.7]{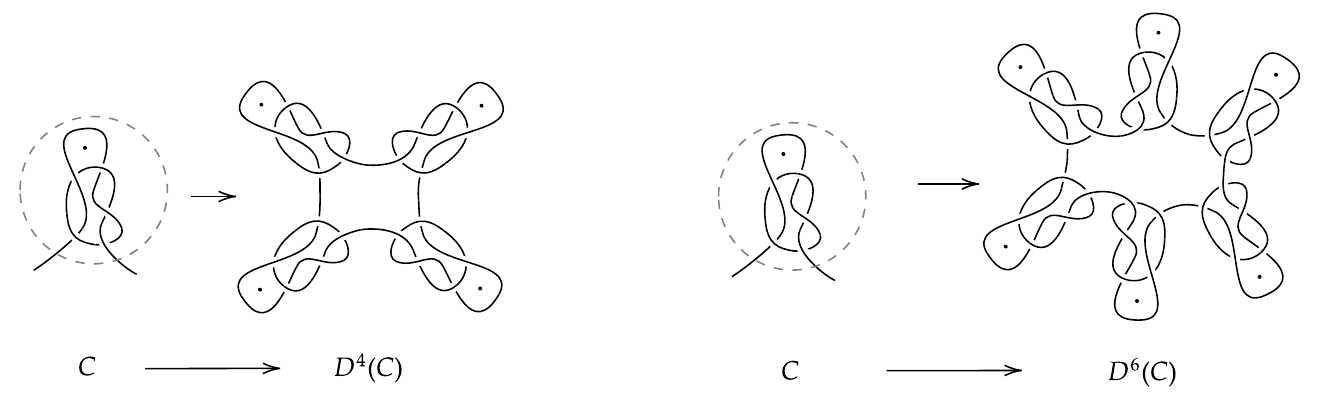}
    \caption{Examples of a $4$-replicant and a $6$-replicant.}
    \label{fig:replicant examples}
\end{figure}
    
    If the complement of $L_{2n,C}$ in $H_{2n,C}$ is tg-hyperbolic, we say $D^{2n}(C)$ is tg-hyperbolic.  We define $\vol(D^{2n}(C)) = \vol(H_{2n,C} \setminus L_{2n,C})$. We then define the \textit{$2n$-volume of a charm $C$} to be  $\vol^{2n}(C) = \frac{1}{2n} \cdot \vol(D^{2n}(C))$. 
\end{definition}

\section{Hyperbolicity of charm bracelets} \label{sect:hyperbolicity}

\subsection{Results}

\begin{theorem}  \label{cor:2_replicant}
    Let $C$ be a charm. If $D^{2}(C)$ is tg-hyperbolic then $D^{2m}(C)$ is tg-hyperbolic for all $m \geq 1$.
   \end{theorem}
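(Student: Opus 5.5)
The natural approach is to realize $D^{2m}(C)$ as a cyclic branched or unbranched cover of $D^{2}(C)$, or of a common quotient orbifold, and then lift the hyperbolic structure.

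\emph{Step 1: the quotient.} The $2m$-replicant $(C,C^R,\dots,C,C^R)$ carries a rotational symmetry of $S^2$ of order $m$. Its axis passes through the two points where all charm disks meet, namely the common top point $t$ and bottom point $b$. This rotation sends each pair $(C,C^R)$ to the next pair. It also acts on $S^2\times I$ by rotating the $S^2$ factor, preserves the stakes and the link, and so acts freely on $H_{2m,C}\setminus L_{2m,C}$ away from the arcs $\{t\}\times I$ and $\{b\}\times I$, which avoid the link and the stakes.

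Alternatively, one can use the reflection symmetry across the vertical axes through $t$ and $b$ that separate $C$ from $C^R$. For $D^{2}(C)$ this reflection realizes $C^R$ as the mirror image of $C$. The quotient of $D^{2m}(C)$ by the dihedral group of order $2m$ is then a single charm $C$, with mirror boundary on its left and right sides, sitting inside a ball. This is the same orbifold $\mathcal{O}_m$ for every $m$, except for the cone angle $\pi/m$ along the edge $\{t\}\times I$ and along $\{b\}\times I$ where the two mirror walls meet.

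\emph{Step 2: cut open along the reflection surfaces.} I would rather argue directly with surfaces. The two reflection surfaces in $D^{2}(C)$ are the vertical discs $\alpha\times I$, where $\alpha$ runs from $t$ to $b$ on either side of $C$. Because $D^{2}(C)$ is tg-hyperbolic and these surfaces are fixed sets of an isometric involution (Mostow--Prasad), they are totally geodesic. Cutting $D^{2}(C)$ along both of them gives a copy $P$ of $C\times I$ minus the link and stakes. $P$ is a hyperbolic polyhedral piece whose boundary consists of totally geodesic faces. Its two mirror faces meet the geodesic boundary $S^2\times\{0,1\}$ at right angles, and they meet each other along the arcs over $t$ and $b$ at dihedral angle $\pi/2$. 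I expect to get this angle because in $D^{2}(C)$ the two discs together form the full geodesic sphere through $t$ and $b$; each is a half, so they meet at angle $\pi$, and the angle between them inside $P$ is $\pi$ measured around the axis, hence $\pi/2$ per half. This is where care is needed.

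\emph{Step 3: the main obstacle.} Gluing $2m$ copies of $P$ alternately reflected gives $D^{2m}(C)$, but the angles around the $t$- and $b$-edges sum to $2m$ times the angle of $P$ there. That sum is $2\pi$ only for $m=1$, so the hyperbolic structure on $P$ cannot simply be glued for $m>1$. So I would fall back on Thurston's hyperbolization for Haken manifolds with boundary, in the form used in \cite{adams2021lower}. $H_{2m,C}\setminus L_{2m,C}$ is a Haken manifold, so it suffices to show it is irreducible, boundary-irreducible, atoroidal and anannular. Suppose one of these fails, witnessed by an essential sphere, disc, torus or annulus $S$. By the equivariant sphere, disc, torus and annulus theorems (Meeks--Yau, Meeks--Scott), I can take $S$ to be equivariant, or disjoint from its images, under the order-$m$ rotation. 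I can then isotope $S$ to intersect the reflection discs $\alpha_i\times I$ minimally.

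If $S$ misses all the reflection discs, it lies in a single copy of $P$, and $P$ embeds in $D^{2}(C)$; this gives an essential surface there, a contradiction. Otherwise I cut $S$ along the discs into pieces in the copies of $P$. An innermost or outermost argument shows that each piece is an essential disc or a square in $P$ relative to the mirror faces. Reflecting that piece and doubling it across the two mirror faces of $D^{2}(C)$ produces an essential sphere, disc, annulus or torus in $D^{2}(C)$, again a contradiction. Essentiality of the doubled surface follows because $P$ has incompressible, geodesic mirror faces. The delicate cases, which I expect to be the heart of the proof, are pieces that run around the $t$- or $b$-axis, such as an annulus encircling $\{t\}\times I$. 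For these I would use the fact that the $t$-axis has boundary endpoints on $S^2\times\{0,1\}$, and that each charm has strands on both its left and right sides. Together these force such a piece to meet the link, or else to be boundary-parallel.
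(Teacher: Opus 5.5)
Your framework matches the paper's: Thurston's theorem for Haken manifolds, cutting along the sheets, and using total geodesicity of the reflection-fixed annulus $\Sigma_1$ of $D^{2}(C)$ to make the wedge $\widehat{\Sigma}_i$ incompressible and $\partial$-incompressible in the piece. That last observation is the paper's whole proof of this statement, which it then feeds into Theorem~\ref{thrm:general_replicant}. The gap is the step that carries all the weight. You claim that after minimizing, every piece of $S$ is an ``essential disc or square'' whose double in $D^{2}(C)$ is essential. This is asserted rather than proved, and it is not how the contradiction actually arises. For a polygonal piece (a disc with boundary on the wedge, possibly cornered on $E$, or meeting $\partial M$ in one arc), the double in $D^{2}(C)$ is an \emph{inessential} sphere or disc. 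What you need instead is the essential-pair property plus irreducibility of the piece, used to push the polygon across the wedge and lower $(|S\cap E|,|\pi_0(S\cap\mathbf{S'})|)$; this is the Graph Lemma~\ref{lemma:GraphLemma}. That kills spheres, discs and tori meeting $E$. It leaves exactly the hard configurations: tori meeting the sheets in parallel essential curves, and Type I annuli meeting them in core circles or in spanning arcs that cycle around $E$. There every piece is an annulus or square whose double in $D^{2}(C)$ is legitimately inessential. That gives no contradiction by itself, because pieces of $S$ in different copies of $P$ need not be related.

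So you need a mechanism relating pieces in different copies of $P$. The paper's mechanism is to show that adjacent pieces $D_1$ and $D_2^R$ can be isotoped onto each other inside one piece, by isotopies that extend over all of $S$. This is the case analysis of $D_1\cap D_2^R$ leading to Lemma~\ref{lemma:arcAnnulusInessential}, together with the analogous torus argument in Lemma~\ref{lemma:EssentialTorus}. After that, $S$ has the form $D\cup D^R\cup\cdots\cup D^R$ and descends to the smaller replicant, where its compression or parallelism is found and lifted back. Your closing remark about the $t$-axis and strands on both sides does not supply this. The Meeks--Yau/Meeks--Scott equivariance is never used afterwards, and your minimizing isotopy destroys it anyway. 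Making equivariance do the work would need a surface invariant under the whole dihedral group plus an orbifold-level argument, neither of which you give.

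There are three smaller points. First, the case where $S$ misses all the sheets is not immediate. A compressing disc or parallelism for its image in $D^{2}(C)$ may run through the other piece, so it must be pushed back into $P$ using the essential-pair property. For annuli you must also exclude a ball swallowing a sheet, using the condition that each charm has strands on both sides (Lemmas~\ref{lemma:torus_in_piece} and~\ref{annulusmissingsheet}). Second, the paper runs the argument as an induction $2n\to 2n+2$. It handles annuli missing one sheet through the map $\phi$ from $\frac{1}{2}D^{2n+2}(C)$ to $D^{2n}(C)$; comparing directly with $D^{2}(C)$ needs a substitute for that step. Third, in $D^{2}(C)$ the fixed set is a single smooth geodesic annulus containing $E$ in its interior. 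So the two halves of the wedge meet along $E$ at angle $\pi$, not $\pi/2$; that is what makes your total $2m\pi$ and the naive gluing fail for $m>1$. Since you abandon Step 2, this does not affect the rest.
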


   In order to prove this theorem, we first prove another theorem from which this will follow. But, we do need a bit more notation.

\subsection{Notation}

We follow notation introduced in \cite{adams2021lower}.

\begin{definition}Let $C$ be a charm, and $D^{2n}(C) = (H_{2n,C},L_{2n,C})$. For convenience of notation, let $M = H_{2n,C}$, and let $L = L_{2n,C}$.  A \textit{sheet} $\Sigma_i$ is the surface that is the properly embedded fixed point set of a reflection isometry for $D^{2n}(C)$ as shown in Figure \ref{fig:SplittingAndWedges}. Note that for $D^{2n}(C)$ there are $n$ distinct sheets, each an annulus intersected multiple times by the corresponding link. The \textit{separating starburst} $\mathbf{S'} = \cup_i \Sigma_i$ is the union of the set of sheets in $D^{2n}(C)$.  The separating starburst $\mathbf{S'}$, divides $D^{2n}(C)$ into a series of $2n$ homeomorphic 
connected components in the link exterior, called \textit{pieces} and denoted $P_i$. Note that $P_i$ consists of a ball with open neighborhoods of stakes removed, and the open neighborhood of the tangle that is the intersection of $L$ with this ball removed.  A \textit{wedge} $\widehat{\Sigma}_i$ is the intersection of the  separating starburst with a piece $P_i$. Note it lies in $\partial P_i$ and is punctured by the endpoints of the tangle. 
\end{definition}

\begin{figure}[htbp]
    \centering
    \includegraphics[trim = {2cm 2cm 2cm 2cm}, scale=0.2]{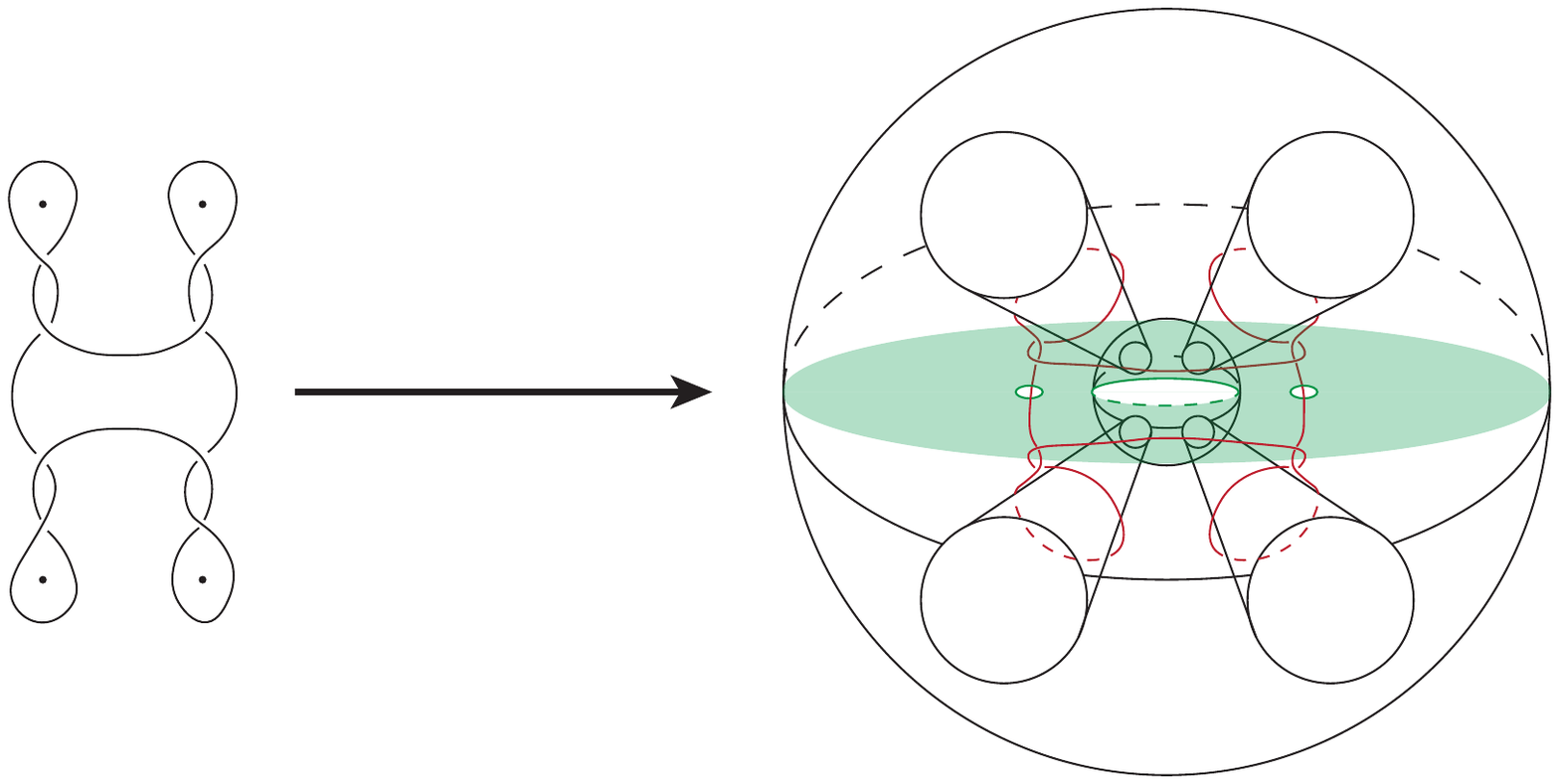}
    \caption{A replicant and a sheet.}
    \label{fig:SplittingAndWedges}
\end{figure}

\begin{definition}Let $\frac{1}{2}D^{2n+2}(C)$ denote the portion of $D^{2n+2}(C)$ lying to one side of $\Sigma_i$. Note that up to homeomorphism, it does not depend on $i$. For $n \geq 2$, there is an embedding $\phi: \frac{1}{2}D^{2n+2}(C) \rightarrow D^{2n}(C)$, that sends pieces to pieces and wedges to wedges, and restricts to an embedding $P_i \rightarrow D^{2n}(C)$ as shown in Figure \ref{fig:inclusion map}.     When $n=1$, we define $\phi$ so that it is not an embedding. Instead, $\phi$ denotes the quotient map that identifies opposite sides of $\Sigma_i$. The restriction $\phi: P_i \rightarrow D^{2}(C)$ still defines a valid embedding.
\end{definition}

\begin{figure}[htbp]
    \centering
    \includegraphics[scale=0.4]{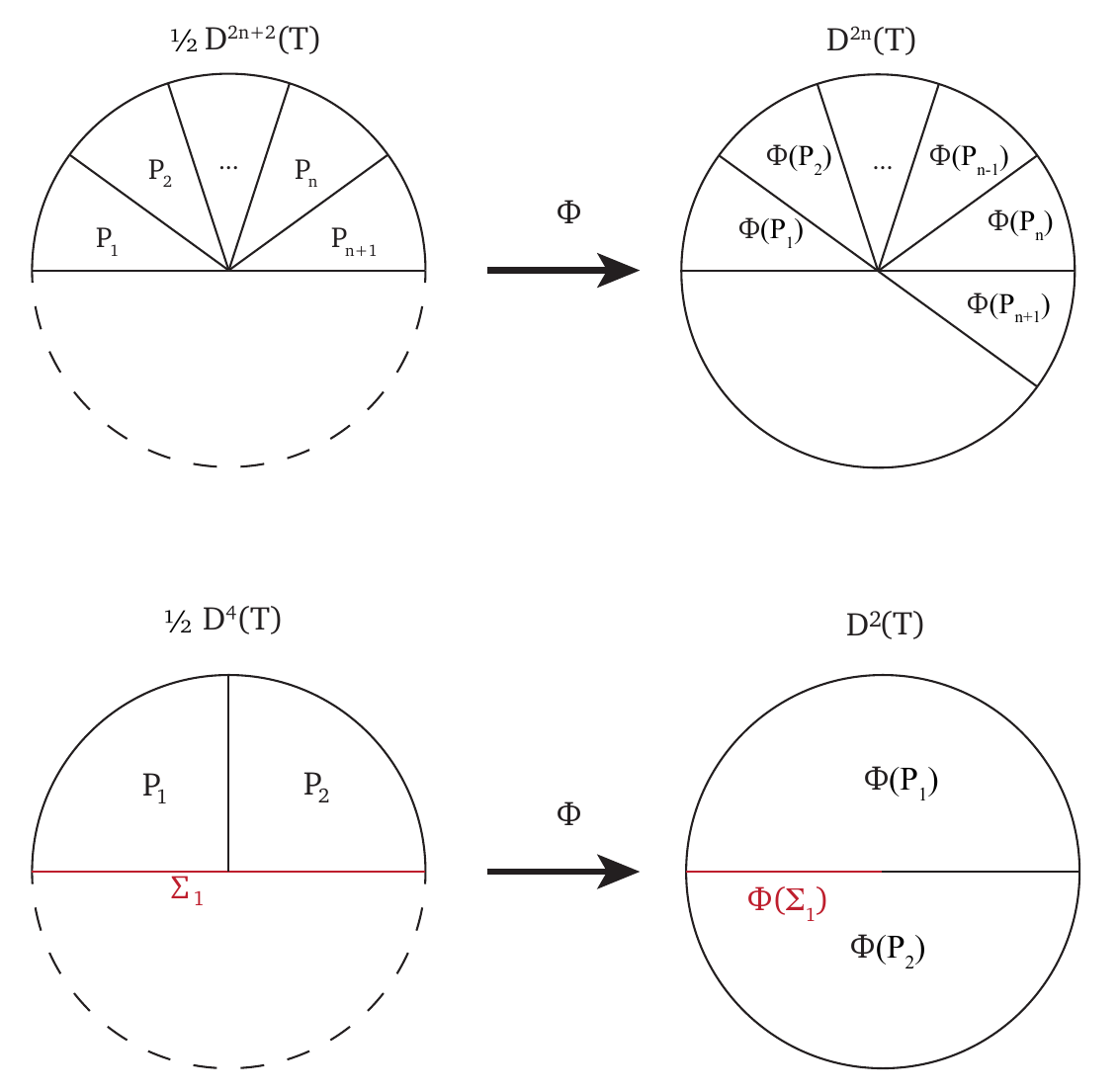}
    \caption{Including pieces into the $2n$ replicant.
    }
    \label{fig:inclusion map}
\end{figure}

\begin{definition} The \textit{central axis} $\mathbf{E} = \bigcap_{i= 1}^{i= n} \Sigma_i$ is the intersection of the sheets, which consists of two closed intervals. Note that any distinct pair of sheets intersects in just $\mathbf{E}$. (In the case $n=1$, we take $E$ to be $\{ b, t\} \times I$, even thought there are no additional surfaces intersecting $\Sigma_1$. 

We define $\partial M$ to be the boundary of $H_{2n,C}$. 
\end{definition}

\begin{definition} \label{pairinc} Given a surface $X$ in a 3-manifold $Y$, we say the pair $(Y,X)$ is incompressible (respectively $\partial$-incompressible) if $X$ is incompressible (resp. $\partial$-incompressible) as a surface in $Y$. If $(Y, X)$ is both incompressible and $\partial$-incompressible, we say that it is an {\it essential pair}.
\end{definition}

\subsection{Outline of the proof of the General Theorem}

We derive Theorem \ref{cor:2_replicant}  by proving the following.

\begin{theorem} \label{thrm:general_replicant}
    Let $C$ be a charm such that $(P_i, \widehat{\Sigma}_i)$ forms an essential pair. If $D^{2n}(C)$ is hyperbolic then $D^{2m}(C)$ is hyperbolic  for all $m \geq n \geq 1$.
    \end{theorem}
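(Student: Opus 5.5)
We use Thurston's hyperbolization for Haken manifolds with boundary, in the form used in \cite{adams2021lower}. A compact orientable irreducible 3-manifold whose boundary components include surfaces of genus at least two is tg-hyperbolic exactly when it is boundary-irreducible, atoroidal, and anannular, that is, it contains no essential spheres, disks, tori or annuli. So fix $m \geq n$ and suppose $D^{2m}(C)$ contains an essential surface $F$ of one of these four types. We will produce an essential surface of the same type in $D^{2n}(C)$, contradicting its hyperbolicity.

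\textbf{Step 1: put $F$ in normal position with respect to the starburst.} Isotope $F$ to be transverse to the separating starburst $\mathbf{S'}$ and to the central axis $\mathbf{E}$. Among such positions, choose one minimizing the lexicographic complexity consisting of $|F\cap \mathbf{E}|$ followed by the number of components of $F\cap\mathbf{S'}$.

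\textbf{Step 2: clean up the intersections using the essential pair.} We use that $(P_i,\widehat{\Sigma}_i)$ is an essential pair, and that $F$ is incompressible and $\partial$-incompressible.
\begin{itemize}
\item A simple closed curve of $F\cap\widehat{\Sigma}_i$ that is trivial in $\widehat{\Sigma}_i$ bounds a disk on $F$ by incompressibility of $F$. Irreducibility, which we would verify first from the piece structure, then lets us remove it.
\item A curve that is nontrivial in $\widehat{\Sigma}_i$ but trivial in $F$ would give a compressing disk for $\widehat{\Sigma}_i$ inside a piece. This contradicts incompressibility of the pair.
\item Arcs are handled the same way. Inessential arcs are removed using $\partial$-incompressibility of $\widehat{\Sigma}_i$ in $P_i$ and of $F$.
\item Arcs meeting $\mathbf{E}$ are pushed off $\mathbf{E}$ when possible.
\end{itemize}
After this, each component of $F\cap P_i$ is an essential surface in $P_i$ whose boundary lies on $\widehat{\Sigma}_i\cup\partial P_i$. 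Since $F$ is a disk, annulus, sphere or torus, Euler characteristic counting (each piece contributes $\chi\le 0$ unless it is a disk) forces every piece of $F\cap P_i$ to be a disk or an annulus. The disks are essential, so they cannot be parallel into the boundary.

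\textbf{Step 3: transfer $F$ to $D^{2n}(C)$.} This is the main obstacle.
\begin{itemize}
\item If $F$ misses the starburst, it lies in a single piece $P_i$. Applying $\phi$ embeds it into $D^{2n}(C)$, and it remains essential there, because the wedges are essential and so $P_i$ sits $\pi_1$-injectively with incompressible frontier.
\item If $F$ meets the starburst, consider the cyclic sequence of pieces it visits. If $F$ occupies at most $n+1$ consecutive pieces, it lies in $\frac12 D^{2n+2}(C)$, and $\phi$ embeds it into $D^{2n}(C)$ (or, when $n=1$, folds it via the quotient map).
\item If $F$ wraps all the way around the central axis, we use the reflection symmetries across the sheets to fold it. The $2m$ pieces are mapped to $2n$ pieces by a piecewise reflection quotient, which sends $P_i$ to $P_{i \bmod 2n}$ and alternates orientation. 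We try to show that the image surface, after the same cleaning as in Step 2, is still essential. Intersections with $\mathbf{E}$, and the count of how many times the surface wraps around $\mathbf{E}$, must be tracked through this folding.
\item For tori, the main case is a torus surrounding $\mathbf{E}$ or a link component. This would have to be ruled out by noting that it corresponds to a boundary-parallel torus or to one that is also visible in $D^{2n}(C)$.
\end{itemize}

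\textbf{Step 4: conclude.} The essentiality checks in Step 3 rely on the fact that a compressing disk or $\partial$-compressing disk for the image surface could be pulled back and intersected with the wedges in the same way. That would give a compression of $F$, contradicting its choice. Hence $D^{2m}(C)$ contains no essential sphere, disk, torus or annulus, and is tg-hyperbolic.

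Step 3 is where I expect the real work, in particular the wrapping surfaces that cannot be confined to half of the replicant. I would first attempt the reflection-folding argument there.
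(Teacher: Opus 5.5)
There is a genuine gap in Step 3, and also a structural one. Your framework matches the paper's: minimal position with respect to $\mathbf{S'}$ and $\mathbf{E}$, removing trivial intersections via the essential pair, then transplanting into $D^{2n}(C)$.

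The structural problem is that comparing $D^{2m}(C)$ directly with $D^{2n}(C)$ does not work. Your two cases (at most $n+1$ consecutive pieces, or wrapping all the way around $\mathbf{E}$) are not exhaustive when $m>n+1$. Once $m\ge 2n$, a surface that misses a sheet of $D^{2m}(C)$ can still run through $m\ge 2n$ consecutive pieces, and that region cannot be placed piece-by-piece inside $D^{2n}(C)$. The paper avoids this by proving only that $D^{2n}(C)$ hyperbolic implies $D^{2n+2}(C)$ hyperbolic, and then inducting; the hypothesis on $(P_i,\widehat{\Sigma}_i)$ does not depend on $n$. Half of $D^{2n+2}(C)$ then embeds in $D^{2n}(C)$ via $\phi$. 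This gives the lemma that a Type I annulus missing a sheet is inessential, which is used not only for $F$ itself but repeatedly for auxiliary annuli built from pieces of $F$.

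The gap in Step 3 is that your folding does not produce anything usable. The rule $P_i\mapsto P_{i \bmod 2n}$ is continuous only if $n\mid m$. For $n=2$, $m=3$, the wedge between $P_5$ and $P_0$ would be sent to two different wedges of $D^4(C)$. In particular it is never continuous in the step $m=n+1$ with $n\ge 2$. When it is defined, it is a cyclic branched cover along $\mathbf{E}$, and any folding alternative is not locally injective at the fold. Either way, the image of an embedded surface is in general only immersed, so there is nothing to clean up and test for essentiality.

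The missing idea is to make the surface symmetric \emph{before} transplanting it. The paper does this for a minimal Type I annulus $A$ whose graph consists of spanning arcs:
\begin{enumerate}
\item It shows adjacent arcs lie on different sheets.
\item For adjacent disks $D_1\subset P_1$, $D_2\subset P_2$, it reflects $D_2$ into $P_1$ and eliminates the arcs of $D_1\cap D_2^R$ case by case.
\item It uses the doubled annulus $A^*$ of $D_1\cup D_2^R$, which misses a sheet, to isotope $D_1$ onto $D_2^R$ while keeping boundary arcs on the sheets, so the isotopy extends to $A$.
\item Only then is $A$ a union of alternating copies of one disk. The paper then \emph{builds} a new annulus in $D^{2n}(C)$ from $2n$ copies, whose compressions and $\partial$-compressions pull back to $A$.
\end{enumerate}

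The remaining surface types also need arguments your proposal does not supply:
\begin{itemize}
\item Tori missing $\mathbf{E}$ are symmetrized the same way.
\item Tori meeting $\mathbf{E}$ are excluded by an Euler characteristic count on a graph whose vertices have valence $2n+2$, together with the Graph Lemma.
\item Annuli with boundary on $\partial N(L')$ need separate reductions. Type II reduces to Type I via the frontier of a neighborhood of $A\cup K$. Type III is handled by the Seifert-fibered argument, since $\partial M$ has genus greater than one.
\end{itemize}

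As written, Steps 3 and 4 restate what has to be proved rather than proving it.
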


    We then prove that in the case of Theorem \ref{cor:2_replicant}, $(P_i, \widehat{\Sigma}_i)$ is an essential pair. However, we note here that $D^{2n}(C)$ can be  hyperbolic even when  $(P_i, \widehat{\Sigma}_i)$ is not an essential pair. For instance, in Figure \ref{charmdisk}, we see an alternating charm $C$ such that $\widehat{\Sigma}_i$ is not incompressible. The boundary of a compressing disk is shown in red. This charm is neither 2-hyperbolic nor 4-hyperbolic since the disk reflects to generate an essential sphere in $D^2(C)$ and an essential torus in $D^4(C)$. However, $C$ is 6-hyperbolic. This particular charm is an example of an alternating charm that fails the conditions we provide in \cite{AdamsKang} for an alternating charm to be 2-hyperbolic.

\begin{figure}[htbp]
    \centering
    \includegraphics[scale=0.6]{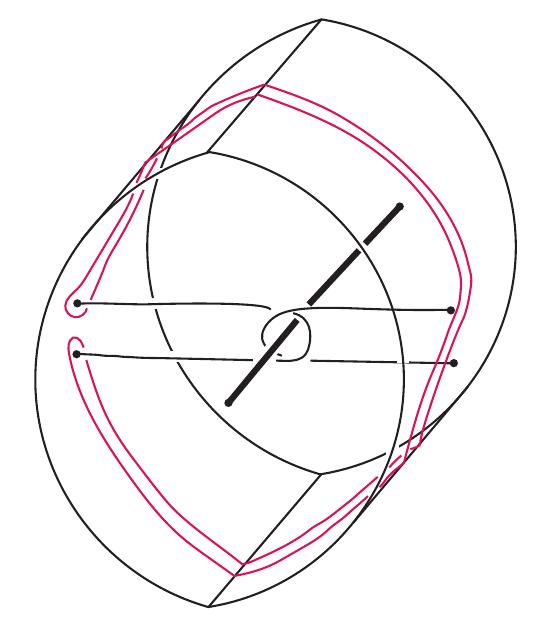}
    \caption{An alternating charm with compressible $\widehat{\Sigma}_i$, such that $D^2(C)$ and $D^4(C)$ are not hyperbolic but $D^6(C)$ is.}
    \label{charmdisk}
\end{figure}

    Moreover, Theorem \ref{thrm:general_replicant} is interesting in its own right. We include an example of a charm in Figure \ref{4hypincomp} where the shaded disk becomes an essential annulus in $D^2(C)$, so $D^2(C)$ is not hyperbolic. However, $D^4(C)$ is hyperbolic, as can be checked with SnapPy \cite{SnapPy},  and it is not hard to show that $(P_i, \widehat{\Sigma}_i)$ is essential. So,  Theorem \ref{thrm:general_replicant} implies $D^{2m}(C)$ is hyperbolic for all $m \geq 2$.

    \begin{figure}[htbp]
    \centering
    \includegraphics[scale=0.4]{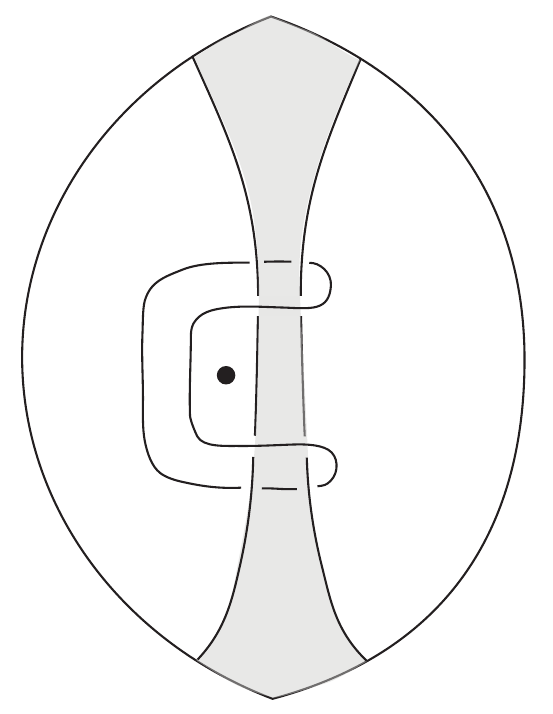}
    \caption{A charm with essential $\widehat{\Sigma}_i$, such that $D^2(C)$ is not hyperbolic but $D^4(C)$ is, and therefore  all $D^{2m}(C)$ are for $m \geq 2$.}
    \label{4hypincomp}
\end{figure}

    We provide an inductive proof of Theorem \ref{thrm:general_replicant}. By the work of Thurston, it is enough to show that $D^{2n+2}(C)$ contains no essential spheres, disks, tori or annuli. In Subsection \ref{subsect:sheetsAndEssentiality}, we show the sheets in $D^{2n+2}(C)$ are essential. In Subsection \ref{subsect:eliminatingSpheresAndDisks}, we use essentiality of the sheets to isotope any essential spheres or disks to lie in a single piece. We then place them in $D^{2n}(C)$, contradicting its hyperbolicity. In Subsection \ref{subsect:eliminatingEssentialTori}, we address the torus case via a slight modification of arguments provided in Theorem 5.1 of \cite{adams2021lower}. In Subsection \ref{subsect:eliminatingEssentialAnnuli}, we use the intersection of any annulus with $\mathbf{S'}$ to control its behavior. After doing so, we tackle the remaining cases by cutting and gluing the annuli to rule out their essentiality.

\medskip

Throughout the rest of this section, we assume $D^{2n}(C)$ is hyperbolic and we assume $(P_i, \widehat{\Sigma}_i)$ is an essential pair. We let $L = L_{2n,C}$ be the link in $D^{2n}(C)$ and $L' = L _{2n+2,C}$ the link in $D^{2n+2}(C)$.

\subsection{Sheets and Essentiality}\label{subsect:sheetsAndEssentiality}

\begin{lemma} \label{Pirred}
    The piece $P_i$ is irreducible.
\end{lemma}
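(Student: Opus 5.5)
The plan is to derive irreducibility of $P_i$ from the hyperbolicity of $D^{2n}(C)$, which is part of the standing hypotheses, using the embedding $\phi|_{P_i}\colon P_i \to D^{2n}(C)$. This restriction is an embedding for every $n\ge 1$, including $n=1$, where $\phi$ itself is only a quotient map. Let $S \subset P_i$ be an embedded $2$-sphere. We must show that $S$ bounds a ball in $P_i$. Since $S$ lies in the interior of $P_i$ after a small isotopy, $\phi(S)$ is a sphere in $D^{2n}(C)$ disjoint from the sheets. Because $D^{2n}(C)$ is tg-hyperbolic, its exterior $M\setminus N(L)$ is irreducible; here $M$ has no spherical boundary, since every stake gives a genus-contributing tube and the boundary of $S^2\times I$ together with the stake tubes forms higher genus surfaces. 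Hence $\phi(S)$ bounds a ball $B$ in $M\setminus N(L)$.

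The key step is to show that $B$ lies in $\phi(P_i)$. Then $\phi^{-1}(B)$ is a ball in $P_i$ bounded by $S$.

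First I would try a direct argument. The complement of $\phi(S)$ in $M\setminus N(L)$ has two components: the ball $B$ and the other side. The wedge $\phi(\widehat{\Sigma}_i)$ is connected and disjoint from $\phi(S)$, so it lies entirely on one side.
\begin{itemize}
\item If the wedge lies outside $B$, then $B$ meets $\phi(P_i)$ only through its interior. By connectedness, $B$ lies in the component of the complement of $\phi(\partial P_i\cap \text{sheets})$ that contains $S$, and that component is $\phi(P_i)$. (The parts of $\partial P_i$ on $\partial M$ and on $\partial N(L)$ are boundary of the whole manifold, so they do not separate.)
\item If the wedge lies inside $B$, we need a contradiction.
\end{itemize}

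The second case is where I expect the main obstacle. The wedge is an incompressible surface in $P_i$ by the essential-pair hypothesis. It is a disk or annulus-type surface punctured by the tangle endpoints, with at least two punctures, so it is not a disk with fewer than two punctures. It also meets $\partial M$: its boundary includes arcs on the outer and inner spheres of $S^2\times I$ near $t$ and $b$. A ball $B$ in the exterior cannot contain points of $\partial M$ in its interior. Yet the wedge's boundary lies on $\partial M\cup\partial N(L)$, which would have to be inside $B$, contradicting $\partial B=\phi(S)$ lying in the interior.

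So the second case is impossible, and the conclusion is straightforward. It is worth double-checking this boundary-touching point carefully, especially for $n=1$. There the image of the wedge is the whole sheet $\Sigma_1$, which still meets $\partial M$ along $\{t,b\}\times I$ ends and the outer boundary, so the same argument applies.
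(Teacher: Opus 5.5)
Your proof is correct and follows essentially the paper's argument. You embed $P_i$ in $D^{2n}(C)$, use irreducibility there, and rule out the ball lying on the non-$P_i$ side because that side contains boundary of the manifold: the paper points to the stake boundary, you point to the wedge meeting $\partial M\cup\partial N(L)$. Your connectedness step makes explicit why the ball then lies in $\phi(P_i)$, which the paper leaves implicit. The remarks in your second case about incompressibility and punctures of the wedge are not needed; the observation that the wedge touches the boundary does all the work.
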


\begin{proof}
    We have $P_i \subseteq D^{2n}(C)$ which contains no essential spheres.  So, a sphere in $P_i$ that does not bound a ball in $P_i$ must bound a ball to the outside in $D^{2n}(C)$. But $D^{2n}(C)$ contains stakes and hence has a higher genus boundary to the outside of the sphere, meaning the outside of the sphere cannot be a ball, a contradiction.
\end{proof}

\begin{lemma} \label{boundarypieceinc} Given a piece $P_i$, $\partial P_i \setminus \widehat{\Sigma}_i$ is incompressible in $P_i$.
\end{lemma}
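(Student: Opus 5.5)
Suppose $D$ is a compressing disk for $\partial P_i \setminus \widehat{\Sigma}_i$ in $P_i$. We will push $D$ into $D^{2n}(C)$ via the embedding $\phi|_{P_i}: P_i \rightarrow D^{2n}(C)$ and show that $\phi(D)$ contradicts the hyperbolicity of $D^{2n}(C)$. First we may take $\partial D$ to lie in the interior of $\partial P_i \setminus \widehat{\Sigma}_i$, since $\partial D$ can be pushed off $\widehat{\Sigma}_i$ within that surface. This surface consists of several kinds of pieces:
\begin{itemize}
\item portions of the outer boundary $\partial M$ coming from $S^2\times\{0\}$ and $S^2\times\{1\}$;
\item portions of the boundaries of the stake cylinders lying in the piece;
\item annular or disk portions of $\partial N(L)$, namely tubes around arcs of the tangle and tori around closed components.
\end{itemize}
So $\partial D$ lies in a component $F$ of $\partial P_i\setminus\widehat{\Sigma}_i$, and $F$ is contained in a single boundary component $B$ of $D^{2n}(C)$. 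Here $B$ is either the higher genus boundary $\partial M$ or a torus $\partial N(K)$ for a component $K$ of $L$.

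\textbf{Case 1: $\partial D$ bounds a disk $E$ in $B$.} Then $D\cup E$ is a sphere in $D^{2n}(C)$. It is not essential, and $D^{2n}(C)$ is irreducible, so it bounds a ball $Q$ in $D^{2n}(C)$. Since $\partial D$ is essential in $F$, the disk $E$ must meet $\partial F$, and hence meets $\widehat{\Sigma}_i$ or passes beyond the piece. Because $E\subset B$ extends outside $F$, $E$ contains part of $\widehat{\Sigma}_i$, so $E$ meets $\phi(\widehat{\Sigma}_i)$ in some set. Consider $\phi(\widehat{\Sigma}_i)\cap Q$.
\begin{itemize}
\item If some component of $\widehat{\Sigma}_i\cap E$ is a puncture of $\widehat{\Sigma}_i$ by a tangle strand, or a boundary curve of $\widehat{\Sigma}_i$ on a stake, then the ball $Q$ contains a strand of $L$ or a stake. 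This is impossible, since $Q$ misses $L$ and the stakes; a stake in particular extends from $S^2\times\{0\}$ to $S^2\times\{1\}$ and cannot be trapped in $Q$.
\item Otherwise $E\cap\widehat{\Sigma}_i$ consists of arcs. An outermost such arc on $E$ cuts off a subdisk, and $\widehat{\Sigma}_i\cap Q$ contains a disk giving a $\partial$-compression of $\widehat{\Sigma}_i$ in $P_i$. This contradicts the assumption that $(P_i,\widehat{\Sigma}_i)$ is an essential pair, unless the arc is inessential in $\widehat{\Sigma}_i$. Inessential arcs can be removed by isotopy using irreducibility (Lemma \ref{Pirred}). After removing them, $\partial D$ bounds a disk in $F$ itself, contradicting the choice of $D$.
\end{itemize}

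\textbf{Case 2: $\partial D$ is essential in $B$.} Then $\phi(D)$ is a compressing disk for $B$ in $D^{2n}(C)$. If $B$ is a torus $\partial N(K)$, this contradicts hyperbolicity, since cusps of a hyperbolic manifold are incompressible. If $B=\partial M$, it contradicts the requirement that the higher genus boundary be totally geodesic, hence incompressible.

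\textbf{Main obstacle.} The hardest step is Case 1: turning the ball $Q$ into a $\partial$-compression of $\widehat{\Sigma}_i$, or into an isotopy showing $\partial D$ was trivial in $F$. I would do this by an innermost-curve and outermost-arc argument on $E\cap\phi(\widehat{\Sigma}_i)$. Closed curves are handled using incompressibility of $\widehat{\Sigma}_i$ and Lemma \ref{Pirred}, and arcs are handled using $\partial$-incompressibility.

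A special subcase arises when $n=1$, since $\phi$ is only a quotient on all of $\frac12 D^{4}(C)$. However, $\phi|_{P_i}$ is still an embedding, so the argument goes through unchanged.
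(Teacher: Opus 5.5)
Your Case 2 is the paper's whole argument: $P_i\subset D^{2n}(C)$, and a properly embedded disk whose boundary is essential in $\partial D^{2n}(C)$ contradicts $\partial$-irreducibility. The substance of the lemma is showing Case 1 cannot occur, and your treatment of it does not work.

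Since $\partial E=\partial D$ lies in the interior of $F$, and $\widehat{\Sigma}_i$ meets $\partial D^{2n}(C)$ only along $\partial\widehat{\Sigma}_i$, the set $E\cap\widehat{\Sigma}_i$ is a union of entire boundary curves of the wedge. These are meridians around tangle endpoints, together with the two closed curves $c_0,c_1$ in which the wedge meets $S^2\times\{0\}$ and $S^2\times\{1\}$. Each $c_j$ is a union of two arcs from $t$ to $b$; the wedge never meets a stake cylinder. So no component of $E\cap\widehat{\Sigma}_i$ is an arc properly embedded in $E$. Your outermost-arc step, which is supposed to produce a $\partial$-compression of $\widehat{\Sigma}_i$, has nothing to act on, and the conclusion that $\partial D$ ``then bounds a disk in $F$'' is unsupported. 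Your first bullet does not cover the curves $c_0,c_1$ correctly either. The inference that $Q$ would contain a strand of $L$ or a stake is not justified, and $Q$ lies in the link exterior by construction. So the key situation, $F$ the $\partial M$-component with $c_0\subset E$, is never actually ruled out.

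What closes Case 1 is a statement about $\partial F$ inside $B$. Since $E\not\subset F$, $E$ is connected, and $\partial E\cap\partial F=\emptyset$, some component $c$ of $\partial F$ lies in the interior of $E$ and hence bounds a disk in $B$.
\begin{itemize}
\item If $B$ is a link torus, $c$ is a meridian, which bounds no disk there.
\item If $B=\partial M$, then $c$ is $c_0$ or $c_1$, and $c$ is nonseparating on $\partial M$. Start just inside $c_0$, go up a stake tube in $P_i$, cross $c_1$, and come down a stake tube in another piece; this reaches the other side of $c_0$, since every piece contains a stake.
\end{itemize}
Hence $E\subset F$, contradicting the choice of $D$. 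This is exactly the paper's remark that the other pieces ``have stakes giving additional genus to $\partial M$.'' With it, Case 1 is immediate, the ball $Q$ and the essential-pair hypothesis are not needed, and your proof reduces to the paper's.
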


\begin{proof} Suppose not. So, there is a disk $D$ in $P_i$ such that $\partial D \subset \partial P_i \setminus \widehat{\Sigma}_i$ and $\partial D$ does not bound a disk in $\partial P_i \setminus \widehat{\Sigma}_i$. But, we can consider $P_i$ as a subset of $D^{2n}(C)$, and because $\partial D$ does not bound a disk in $\partial P_i \setminus \widehat{\Sigma}_i$, it also cannot bound a disk in $\partial M$ since all other pieces that make up $D^{2n}(C)$ have stakes giving additional genus to $\partial M$.  However, $D^{2n}(C)$ is hyperbolic and hence $\partial$-irreducible, a contradiction to the existence of $D$.

\end{proof}

\begin{lemma}\label{lemma:pieceNoEssentialDisks}
    The replicant $D^{2n+2}(C)$ has no essential disks lying entirely in some $P_i$.
\end{lemma}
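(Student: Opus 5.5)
Suppose for contradiction that $D$ is an essential disk in $D^{2n+2}(C)$ contained in some piece $P_i$. Then $D$ is properly embedded, so $\partial D$ lies in $\partial M \cap P_i$. Since $D$ lies in $P_i$, this means $\partial D \subset \partial P_i \setminus \widehat{\Sigma}_i$, after a small isotopy pushing $D$ off the sheets if it only touches them. The plan is to use the embedding $\phi|_{P_i}: P_i \rightarrow D^{2n}(C)$, which sends pieces to pieces and wedges to wedges; it is an honest embedding even when $n=1$, since then $\phi$ is only a quotient on the sheet. Under this embedding, the portion of $\partial P_i$ that lies in $\partial M'$ (the handlebody boundary of $D^{2n+2}(C)$, including stake boundaries and the boundary of $N(L')$) maps into the corresponding boundary of $D^{2n}(C)$. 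So $\phi(D)$ is a properly embedded disk in $D^{2n}(C)$ with boundary in $\phi(\partial P_i\setminus\widehat{\Sigma}_i)$.

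By Lemma \ref{boundarypieceinc}, $\partial P_i \setminus \widehat{\Sigma}_i$ is incompressible in $P_i$. Hence $\partial D$ bounds a disk $D'$ in $\partial P_i \setminus \widehat{\Sigma}_i \subset \partial M'$. Then $D\cup D'$ is a sphere in $P_i$. By Lemma \ref{Pirred}, $P_i$ is irreducible, so this sphere bounds a ball in $P_i$. The ball gives a parallelism of $D$ into the boundary of $D^{2n+2}(C)$, so $D$ is inessential, a contradiction. If one wants to avoid the ball step, it already suffices that $\partial D$ bounds a disk in the boundary, since the standard definition of an essential disk only requires $\partial D$ to be nontrivial on the boundary.

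The step needing most care is the bookkeeping about $\partial D$. First, one must check that a properly embedded disk inside $P_i$ really has boundary in $\partial P_i\setminus\widehat{\Sigma}_i$, rather than running along the wedge. This holds because the wedge is interior to $D^{2n+2}(C)$, so any contact of $D$ with the wedge can be removed by pushing $D$ slightly into the interior of $P_i$. Second, one must confirm that Lemma \ref{boundarypieceinc} applies verbatim to $P_i$ viewed inside $D^{2n+2}(C)$. It does, because the pieces of $D^{2n+2}(C)$ and $D^{2n}(C)$ are homeomorphic via $\phi$, with wedges corresponding to wedges, and the lemma is intrinsic to the pair $(P_i,\widehat{\Sigma}_i)$.
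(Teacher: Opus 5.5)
Your proof is correct and is essentially the paper's argument. The paper embeds $P_i$ into $D^{2n}(C)$ and uses $\partial$-irreducibility there to see that $\partial D$ must bound a disk in $\partial P_i\setminus\widehat{\Sigma}_i \subset \partial D^{2n+2}(C)$, which is exactly the content of Lemma~\ref{boundarypieceinc}. Citing that lemma directly is a cleaner packaging of the same idea, and it makes the $\phi(D)$ discussion in your first paragraph unnecessary.
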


\begin{proof}
   Suppose there were such a disk $D$.  Then, considering  $P_i$ as a subset of $D^{2n}(C)$ implies $D$ is properly embedded in $D^{2n}(C)$, with boundary that does not bound a disk in $\partial P_i \setminus \widehat{\Sigma}_i$, and does not bound a disk in $\partial D^{2n}(C) \setminus \partial P_i$ since that portion of $\partial D^{2n}(C)$ corresponds to stakes and therefore has higher genus. Therefore, the disk is essential in $D^{2n}(C)$, a contradiction to its hyperbolicity.
\end{proof}

\begin{definition} \label{minimalsurf} To each compact essential surface $S$ properly embedded in $D^{2n+2}(C)$ we can isotope so that $S$ and $\mathbf{S'}$ are transverse and $S$ intersects $E$ in a finite set of points that do not include $E$'s endpoints. Then, we associate the pair $(|E \cap S|,|\pi_0(\mathbf{S'} \cap S)|)$.

We say $S$ has {\it minimal complexity in its isotopy class} if it is minimal with respect to this pair under lexicographic ordering under isotopy. We say an essential properly embedded surface $S$ of a given topological type (disk, sphere, torus or annulus) has {\it minimal complexity in its type} if it is minimal with respect to this pair over all essential surfaces of this type properly embedded in the manifold. In some cases, we will drop the word ``complexity'' when the meaning is clear. 
\end{definition}

\begin{lemma}  \label{lemma:splitting_incomp}
    The pairs $(D^{2n}(C), \Sigma_i)$ and $(D^{2n+2}(C), \Sigma_i)$  are incompressible.
   \end{lemma}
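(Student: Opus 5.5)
Suppose some sheet $\Sigma_i$ (in either $D^{2n}(C)$ or $D^{2n+2}(C)$) has a compressing disk $D$: an embedded disk with $D\cap\Sigma_i=\partial D$ and $\partial D$ essential in $\Sigma_i$ (punctured by $L$). We will push $D$ into a single piece and then contradict the essentiality of $(P_j,\widehat{\Sigma}_j)$ assumed throughout.

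Among all compressing disks for $\Sigma_i$, choose $D$ transverse to the other sheets and minimizing $|D\cap \mathbf{E}|$ first, then the number of components of $D\cap \mathbf{S'}$, as in Definition~\ref{minimalsurf}. First we show $D$ can be taken disjoint from $\mathbf{E}$. Since $\partial D\subset\Sigma_i$ is transverse to the other sheets only away from $\mathbf{E}$, any point of $D\cap \mathbf{E}$ in the interior of $D$ is a vertex where the arcs of $D\cap\Sigma_j$, $j\ne i$, cross. Consider the graph $D\cap\mathbf{S'}$ and an innermost closed curve or outermost arc. An innermost loop bounds a subdisk $D'$ in some piece with $\partial D'\subset\widehat{\Sigma}_j$. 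By incompressibility of $\widehat{\Sigma}_j$, $\partial D'$ bounds a disk in $\widehat{\Sigma}_j$. By Lemma~\ref{Pirred} the two disks cobound a ball, so we can isotope to remove the loop, lowering complexity. An outermost arc with endpoints on $\partial D$ cuts off a subdisk $D''$ whose boundary is one arc in $\widehat{\Sigma}_j$ and one arc in $\Sigma_i$. Since $\Sigma_i\cap\Sigma_j=\mathbf{E}$, this is a $\partial$-compression-type disk for the wedge relative to the neighboring wedge. The $\partial$-incompressibility of the pair, together with irreducibility, lets us isotope across it, again reducing complexity.

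Points of $\mathbf{E}$ in the interior of $D$ need more care. Near such a point, $D$ meets every sheet, and the sectors of $D$ around that point lie in consecutive pieces. We would take an outermost sector, i.e.\ a region of $D\setminus\mathbf{S'}$ lying in a single piece $P_k$ and adjacent to the point. Its boundary lies in $\widehat{\Sigma}_k\cup\partial P_k$, and we would use the essential pair hypothesis to slide $D$ off $\mathbf{E}$. This is the step I expect to be the main obstacle: it amounts to showing that a disk in $P_k$ whose boundary runs once around a wedge corner is inessential. If the sector argument proves awkward, the fallback is to double $P_k$ along $\widehat{\Sigma}_k$ and use hyperbolicity of $D^{2n}(C)$.

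Once $D$ misses $\mathbf{E}$ and all other sheets, it lies in a single piece on one side of $\Sigma_i$, and $\partial D$ lies in one wedge $\widehat{\Sigma}_k$. Since $\partial D$ is essential in $\Sigma_i$, it is essential in $\widehat{\Sigma}_k$, unless it bounds a disk in $\Sigma_i$ containing part of $\mathbf{E}$. That is impossible, because each component of $\mathbf{E}$ is an arc running between the two boundary components of the annulus $\Sigma_i$. So $D$ compresses $\widehat{\Sigma}_k$ in $P_k$, contradicting the assumption that $(P_k,\widehat{\Sigma}_k)$ is an essential pair. For $D^{2n+2}(C)$ and $D^{2n}(C)$ the argument is identical, since both are assembled from the same pieces. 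For $n=1$ there is just one sheet, so the reduction is immediate.
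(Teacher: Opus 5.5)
Your skeleton matches the paper's: minimal complexity, innermost loops removed via wedge incompressibility and Lemma~\ref{Pirred}, outermost arcs, then a disk inside one piece contradicting the essential-pair hypothesis. However, two steps are not correct as written.

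\textbf{First gap: the obstacle you flag does not exist.} The case you single out as the main obstacle cannot occur. You leave it unresolved because you miss the one observation that rules it out: $\mathbf{E}\subset\Sigma_i$ and $D\cap\Sigma_i=\partial D$, so $D\cap\mathbf{E}\subset\partial D$. Consequently $D\cap\mathbf{S'}$ has no vertices in the interior of $D$. Apart from $\partial D$, it consists of disjoint simple closed curves and disjoint properly embedded arcs. Each arc lies in some $\Sigma_j$, $j\neq i$, and has both endpoints in $\partial D\cap\mathbf{E}$. This is exactly what makes ``outermost arc'' meaningful. In your write-up, where arcs are allowed to cross at interior points of $D$, it is not.

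\textbf{Second gap: the outermost-arc step cites the wrong property and omits a case.} Let $\gamma$ be outermost, cutting off $D'\subset P_k$ with $\partial D'=\gamma\cup\alpha$ and $\alpha\subset\partial D$. Then $\alpha$ lies in the half of $\Sigma_i$ bounding $P_k$. That half is part of the wedge $\widehat{\Sigma}_k=\mathbf{S'}\cap P_k$, since the wedge contains both faces of the piece. So $\gamma\cup\alpha$ is a closed curve in $\widehat{\Sigma}_k$, and $D'$ is not a $\partial$-compression-type disk. What you need is incompressibility of $\widehat{\Sigma}_k$ in $P_k$, and it gives a dichotomy:
\begin{itemize}
\item Either $\gamma\cup\alpha$ is essential in $\widehat{\Sigma}_k$, for example when the endpoints of $\gamma$ lie on different components of $\mathbf{E}$, or when the curve encloses punctures. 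Then $D'$ itself compresses the wedge, a contradiction.
\item Or it bounds a disk $D''\subset\widehat{\Sigma}_k$. Then the ball bounded by $D'\cup D''$ lets you slide $\alpha$ across $\mathbf{E}$ inside $\Sigma_i$, lowering $|D\cap\mathbf{E}|$ by two.
\end{itemize}
Your version only describes the isotopy. But when $\partial D$ meets $\mathbf{E}$ essentially, no isotopy can remove $D\cap\mathbf{E}$. For instance, if $\partial D$ is the core of the annulus $\Sigma_i$, it must cross both arcs of $\mathbf{E}$. In that situation the argument has to end in the first alternative, so your claim that $D$ can always be taken disjoint from $\mathbf{E}$ needs to be replaced by this dichotomy.

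With these two repairs your proof becomes the paper's. Your treatment of $D^{2n}(C)$ by the same piece argument is fine, since the pieces and the essential-pair hypothesis are identical. The paper instead notes that $\Sigma_i$ is the fixed set of a reflection of the hyperbolic manifold $D^{2n}(C)$, hence totally geodesic and therefore incompressible.
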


\begin{proof} Since $D^{2n}(C)$ is hyperbolic and $\Sigma_i$ is the fixed point set of a reflection, it must be totally geodesic, which implies it is incompressible to either side. 
    We now show $(D^{2n+2}(C),\Sigma_i)$ is an incompressible pair. 
    Choose a compression disk $D$  for $\Sigma_i$ that is minimal complexity in its type. 
    
    Suppose $D$ does not intersect $E$. If $D$ does not intersect other sheets, it is a compression disk for $\widehat{\Sigma}_i$ in $P_i$, contradicting that $(P_i, \widehat{\Sigma}_i)$ is an essential pair. 
    
    Thus, we assume $D$ does intersect other sheets. Because $D\cap E =\emptyset$, every intersection curve is a simple closed curve  on $D$. Pick an innermost curve $\gamma$, bounding a disk $D' \subset D$. Note $\gamma$ lies on some wedge $\widehat{\Sigma}_j$ and $D'$ lies in the piece $P_j$. Since wedge-piece pairs are essential, there exists a disk $D'' \subset \widehat{\Sigma}_j$ such that $\partial D'' = \partial D'$. Together $D' \cup D''$ forms a sphere which bounds a ball in $P_j$ by Lemma \ref{Pirred}. One may isotope $D$ to $D''$ and past the wedge through the ball, eliminating an intersection curve and contradicting minimality.

    Thus, assume $D \cap E \neq \emptyset$. Consider the set of intersection curves $\mathbf{S'} \cap D$ on $D$. Recall $E \subset \Sigma_i$ and $D \cap \Sigma_i = \partial D$, so $D$ intersects $E$ only on its boundary and no two intersection curves intersect on the interior of $D$. By the previous argument, simple closed intersection curves contradict the minimality of $D$. Thus, all intersection curves are disjoint properly embedded arcs. Fix an outermost arc $\gamma$ on $D$. Together with an arc $\alpha \subset \partial D$, $\gamma$ co-bounds a disk $D' \subset D$ containing no intersection curves. Note $\gamma \cup \alpha$ is a simple closed curve on $\widehat{\Sigma}_i$ and $D' \subset P_i$. 
    Since $(P_i, \widehat{\Sigma}_i)$ is an essential pair, $\gamma \cup \alpha$ bounds some disk $D'' \subseteq \widehat{\Sigma}_i$. Then $D' \cup D''$ bounds a ball in $P_i$ and we can use it to isotope $\alpha$ along $\Sigma_i$ and through $E$ to eliminate $\gamma$, a contradiction to $D$ being minimal complexity.
    
    See Figure \ref{fig:sheet incompressible}.
\end{proof}

\begin{figure}[htbp]
    \centering
    \includegraphics[scale=0.4]{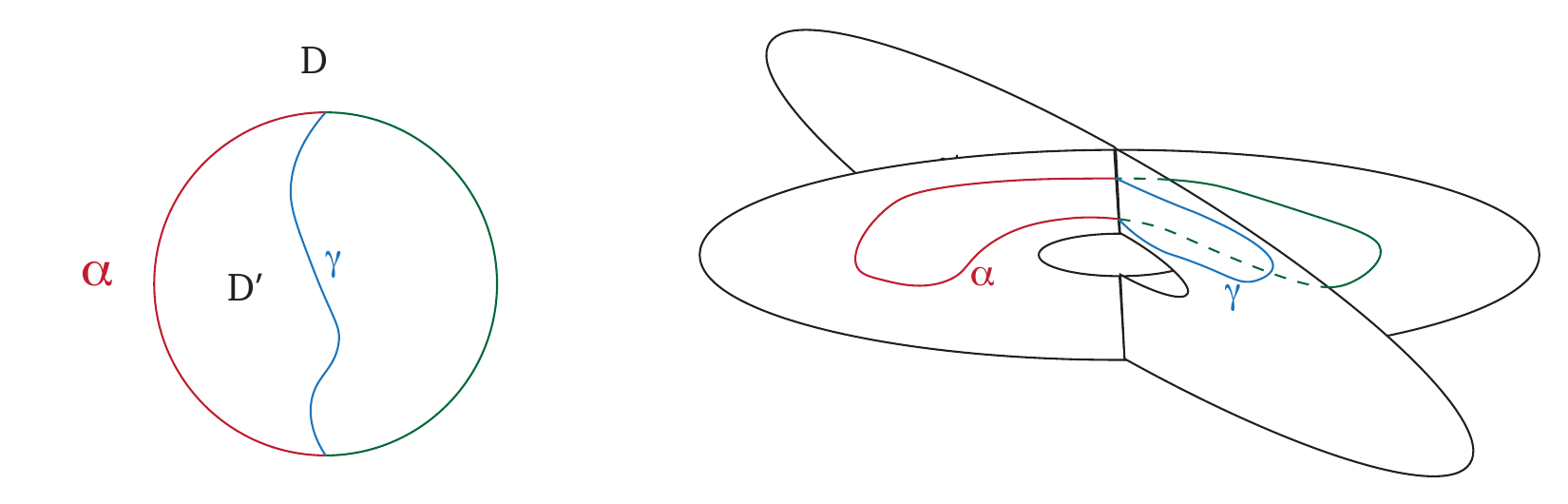}
    \caption{An outermost intersection arc on $D$ yields a disk in a piece $P_i$.}
    \label{fig:sheet incompressible}
\end{figure}

\begin{lemma}
    The pairs $(D^{2n+2}(C), \Sigma_i)$ and $(D^{2n}(C), \Sigma_i)$ are $\partial$-incompressible pairs.
    \label{lemma:splitting_bdy_incomp}
\end{lemma}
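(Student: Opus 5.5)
For $D^{2n}(C)$ I would use hyperbolicity directly, as in Lemma \ref{lemma:splitting_incomp}. The sheet $\Sigma_i$ is the fixed point set of a reflection isometry, so it is totally geodesic. A $\partial$-compressing disk $D$ for $\Sigma_i$ meets $\Sigma_i$ in an arc $\alpha$ and $\partial M$ (or $\partial N(L)$) in an arc $\beta$. Doubling $D$ across the reflection, or taking a regular neighborhood of $D\cup\Sigma_i$, gives an essential disk or annulus in the hyperbolic manifold, which is impossible. Alternatively one can invoke the standard fact that a totally geodesic surface is $\partial$-incompressible. If needed, I would combine this with Lemma \ref{lemma:splitting_incomp} and the standard result that an incompressible surface in an irreducible, $\partial$-irreducible manifold with no essential disks is $\partial$-incompressible unless it is a $\partial$-parallel annulus. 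Since $\Sigma_i$ is punctured by $L$ and separates stakes, it is not $\partial$-parallel.

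For $D^{2n+2}(C)$ I would mirror the proof of Lemma \ref{lemma:splitting_incomp}. Take a $\partial$-compressing disk $D$ of minimal complexity, with $\partial D=\alpha\cup\beta$, where $\alpha\subset\Sigma_i$ is essential and $\beta\subset\partial M'$, and $\partial M'$ includes the link boundary. The intersection $D\cap\mathbf{S'}$ away from $\alpha$ consists of closed curves and arcs. As before, $E\cap\operatorname{int}D=\emptyset$, since $E\subset\Sigma_i$ and $D\cap\Sigma_i=\alpha$. Innermost closed curves are removed using incompressibility of $(P_j,\widehat\Sigma_j)$ and Lemma \ref{Pirred}. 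Arcs with both endpoints on $\beta$ are handled by an outermost arc argument: the outermost subdisk is a $\partial$-compressing disk for $\widehat\Sigma_j$ in $P_j$. Because the pair is essential, the arc cobounds a disk in $\widehat\Sigma_j$ with an arc of $\partial\widehat\Sigma_j$, and we isotope to remove it, contradicting minimality. Arcs from $\alpha$ to $\beta$ are pushed off $\alpha$ through $E$ in the same way once the outermost region has been shown to be trivial. Arcs with both endpoints on $\alpha$ cannot occur, because the other sheets meet $\Sigma_i$ only in $E$ and interior points of $D$ avoid $E$. The remaining arcs run from points of $\alpha\cap E$ to $\beta$.

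After these reductions, $D\cap\mathbf{S'}$ consists of arcs from $\alpha\cap E$ to $\beta$. These cut $D$ into subdisks, each lying in a single piece. Now consider an outermost subdisk $D'$, one containing an end of $\alpha$. Its boundary consists of a subarc of $\alpha$ in $\widehat\Sigma_i$, an arc in another wedge $\widehat\Sigma_j$ of the same piece, and an arc of $\beta$. Since the two wedge arcs together lie in $\partial P$, this $D'$ is a $\partial$-compressing disk for the wedge surface in $P_k$. By essentiality of the pair, the corresponding arc is inessential in $\widehat\Sigma$, and we can isotope across it, reducing $|E\cap D|$. Once $D$ misses $E$ and all other sheets, $D$ lies in one piece and is a $\partial$-compression of $\widehat\Sigma_i$ in $P_i$. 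This contradicts the assumption that $(P_i,\widehat\Sigma_i)$ is essential.

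The main obstacle is this last step: checking that a subdisk with boundary on two different wedges and on $\partial M'$ really contradicts essentiality of $(P_k,\widehat\Sigma_k)$, and that the resulting isotopy strictly lowers complexity. The difficulty is that $\widehat\Sigma_k\cup\widehat\Sigma_j$ meets along $E$. If essentiality alone does not suffice, I would fall back on Lemma \ref{boundarypieceinc} and Lemma \ref{lemma:pieceNoEssentialDisks}: view the subdisk inside $D^{2n}(C)$ via $\phi$, and conclude that an essential $\partial$-compression there would contradict the first part.
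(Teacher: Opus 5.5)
Your plan follows the paper's proof: total geodesicity handles $D^{2n}(C)$, and for $D^{2n+2}(C)$ you take a minimal-complexity $\partial$-compressing disk $D$, clean it up using the essential pairs $(P_j,\widehat\Sigma_j)$ and Lemma~\ref{Pirred}, and finish with the outermost subdisk at an end of $\alpha$. However, one of your reductions is false, and your final step depends on it.

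You claim that arcs of $D\cap\mathbf{S'}$ with both endpoints on $\alpha$ cannot occur, because the other sheets meet $\Sigma_i$ only in $E$ and the interior of $D$ misses $E$. That reasoning only shows such endpoints lie in $\alpha\cap E$. An arc of $D\cap\Sigma_j$ can perfectly well join two points of $\alpha\cap E$: a finger move of $\alpha$ across a component of $E$ and back creates $n$ nested arcs of exactly this kind. When such arcs exist, the arc leaving the first point of $\alpha\cap E$ next to $\alpha'$ may return to $\alpha$. The region next to $\alpha'$ is then not bounded by a subarc of $\alpha$, a single arc of another sheet, and an arc of $\beta$, as you assert.

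The paper disposes of these arcs explicitly before treating arcs that reach $\beta$. Take such an arc that cuts off, together with a subarc of $\alpha$, a subdisk of $D$ containing no other intersection curves. This subdisk lies in a single piece $P_j$, and its boundary is a closed curve in $\widehat\Sigma_j$. Incompressibility of $(P_j,\widehat\Sigma_j)$ gives a disk in $\widehat\Sigma_j$ with the same boundary, and Lemma~\ref{Pirred} gives a ball. Isotoping through that ball removes two points of $E\cap D$, contradicting minimality.

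The obstacle you flag at the end does not arise with the paper's definitions. The wedge $\widehat\Sigma_k=\mathbf{S'}\cap P_k$ is already the union of both half-sheets bounding $P_k$, glued along $E$. So $\alpha'\cup\gamma$ is a single arc in a single wedge, and $\partial$-incompressibility of $(P_k,\widehat\Sigma_k)$ applies to it directly; there is no need to pass through $\phi$.

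What you do need to supply is the ball for each isotopy. In the last step, $\partial(D'\cup D'')$ bounds a disk in $\partial P_k\setminus\widehat\Sigma_k$ by Lemma~\ref{boundarypieceinc}. For arcs with both ends on $\beta$, the needed disk on $\partial D^{2n+2}(C)$ comes from Lemma~\ref{lemma:pieceNoEssentialDisks}. In both cases Lemma~\ref{Pirred} then provides the ball.

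Finally, for $D^{2n}(C)$ rely only on the totally geodesic argument. Your fallback, ``incompressible implies $\partial$-incompressible unless it is a $\partial$-parallel annulus,'' fails when the boundary has higher genus. For example, an essential once-punctured torus in a genus-two boundary component, pushed slightly into the interior, is incompressible but $\partial$-compressible.
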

\begin{proof}Again, since $D^{2n}(C)$ is hyperbolic and $\Sigma_i$ is the fixed point set of a reflection, it must be totally geodesic, which implies it is $\partial$-incompressible to either side.

    We now show $(D^{2n+2}(C), \Sigma_i)$ is $\partial$-incompressible. Choose a minimal complexity $\partial$-compression disk $D$. Recall $\partial D$ is composed of an arc $\alpha \subseteq \Sigma_i$ and an arc $\beta \subseteq \partial D^{2n+2}(C)$. Since $E \subset \Sigma_i$ and $D \cap \Sigma_i \subset \partial D$, then $D \cap E \subset \partial D$. Thus no two intersection curves of $\mathbf{S'} \cap D$ meet in the interior of $D$. As argued in Lemma \ref{lemma:splitting_incomp}, simple closed curves of intersection contradict minimality. Thus, we need only concern ourselves with arcs of intersection, the boundary points of which lie in $\partial D$. Note an outermost 
    intersection arc $\gamma$ with both endpoints on $\alpha$, cuts off a disk $D' \subset D$, with $\partial D' \subset \widehat{\Sigma}_j, D' \subset P_j$. A similar argument as in the last paragraph of the proof of Lemma \ref{lemma:splitting_incomp} shows such intersections can be isotoped away.

    We now consider intersection arcs such that not both their endpoints are in $\alpha$.
    First, suppose $D$ does not intersect $E$. Choose an outermost arc of intersection $\gamma$, which cuts off a disk $D_1 \subset D$, containing no intersection curves.
    
     If one endpoint of $\gamma$ is on $\alpha$, since $\gamma\cap E=\emptyset$, for the other endpoint of $\gamma$ to be on $\beta$ it must be in $\partial \Sigma_i\cap D\subset\alpha$, which reduces
    to the previous case. Thus, we assume both endpoints of $\gamma$ lie in $\beta$, and $\partial D_1 = \gamma \cup \beta'$, where $\gamma \subseteq \widehat{\Sigma}_j$ for some $j$ and $\beta' \subseteq \partial D^{2n+2}(C)$. Since $(P_j, \widehat\Sigma_j)$ is an essential pair, $\gamma$ must cobound a disk $D_2 \subset \widehat{\Sigma}_j$ with an arc on $ \partial \widehat{\Sigma}_j$. Glue $D_1$ to $D_2$ along $\gamma$ forming a disk $D_1 \cup D_2$ properly embedded in $D^{2n+2}(C)$. By Lemma \ref{lemma:pieceNoEssentialDisks}, there exists a disk $D_3 \subset \partial D^{2n+2}(C)$ with $\partial D_3 = \partial (D_1 \cup D_2)$. Together $D_3 \cup D_1 \cup D_2$ form a sphere which must bound a ball by Lemma \ref{Pirred}. One may isotope $D_1$ to $D_2$ and then past $\widehat{\Sigma}_j$, while keeping $\beta'$ on $D_3 \subset \partial D^{2n+2}(C)$. This reduces the number of intersection curves, contradicting minimality of $D_1$. 
    
    Suppose $D$ intersects $E$. Note $E \cap D$ cuts $\alpha$ into a series of subarcs. Take a subarc $\alpha' \subset \alpha$ with one endpoint shared with an endpoint of $\beta$. 
    
    Consider the intersection curves $\mathbf{S'} \cap D$. There is an arc $\gamma$ such that $\gamma$ shares one endpoint with $\alpha'$ and has its other endpoint in $\beta$.  Note $\gamma$ cuts off a sub-arc $\beta' \subset \beta$, and an  outermost disk $D' \subset D$, such that $\partial D' = \alpha' \cup \gamma \cup \beta'$. Note $\alpha' \cup \gamma \subset \widehat{\Sigma}_j$, $\beta' \subset P_j \cap \partial D^{2n+2}(C)$ and $D' \subset P_j$. Since $(P_j, \widehat{\Sigma}_j)$ is an essential pair, $\alpha' \cup \gamma$ cobounds a disk $D''$ in $\widehat \Sigma_j$ with an arc on $\partial \widehat{\Sigma}_j$. Since $\alpha'$ intersects $E$, then $D''$ intersects $E$. By Lemma \ref{boundarypieceinc}, $\partial (D'\cup D'')$ bounds a disk $D'''$ in $\partial P_i \setminus \widehat{\Sigma}_i$. By Lemma \ref{Pirred}, the sphere $\D \cup D' \cup D'''$ bounds a ball in $P_i$. We can use this ball to isotope $D'$ to $D''$ and then beyond, eliminating an intersection of $D$ with $E$ and contradicting its minimality.
    See Figure \ref{fig:sheetsBoundaryIncompressible}.
\end{proof}

\begin{figure}[htbp]
    \centering
    \includegraphics[width=0.5\linewidth]{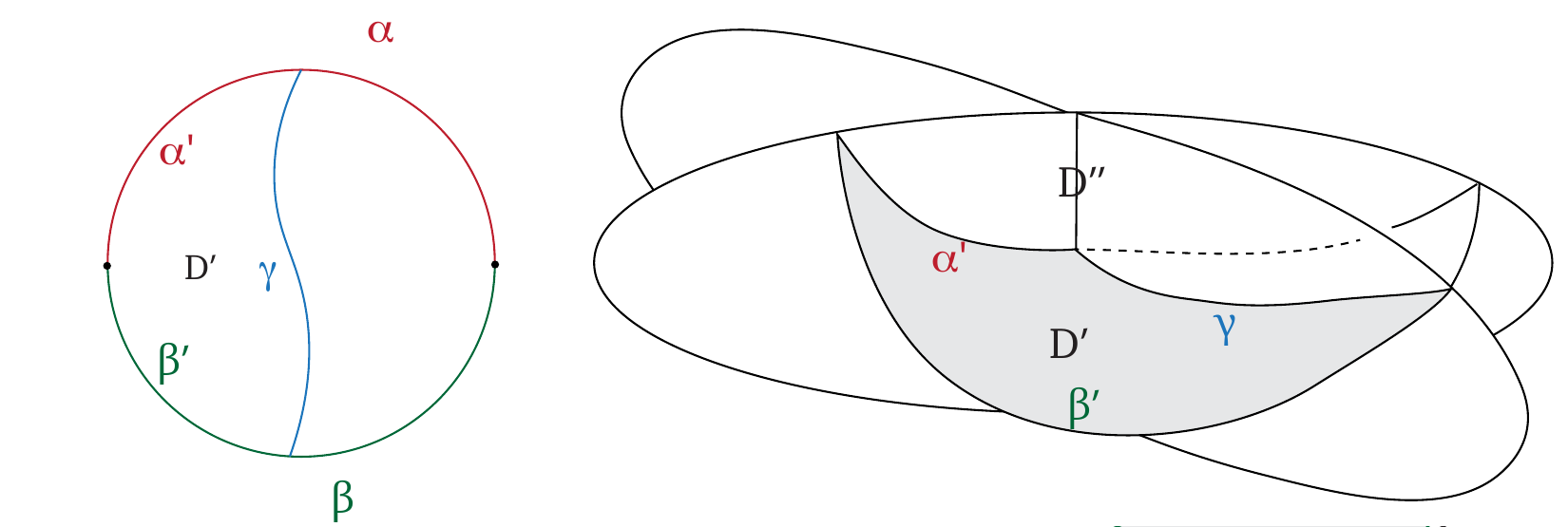}
    \caption{Boundary compressions either induce boundary compressions with a wedge or can be isotoped to avoid $E$.}
    \label{fig:sheetsBoundaryIncompressible}
\end{figure}

\begin{definition}Let $S$ be a properly embedded surface in $D^{2n+2}(C)$. Consider the intersection graph $G$, where vertices of $G$ are points on $E \cap S$, and the edges are arcs on $\mathbf{S'} \cap S$. A region $R$ in the complement of $G$ on $S$ is an \textit{interior polygonal region} if the closure of $R$ is an embedded disk on $S$ and $\partial R \cap \partial S = \emptyset$. Analogously $R$ is an $\textit{exterior polygonal region}$ if the closure of $R$ is an embedded disk and $\partial R$ meets $\partial S$ in a single arc. See Figure \ref{fig:interiorPolygonal}. Note that in either the case of an interior or exterior polygonal region, there may or may not be vertices coming from intersections of $S$ with $E$.
\end{definition}

\begin{lemma}[Graph Lemma] \label{lemma:GraphLemma} If $S$ is a minimal complexity properly embedded essential surface in its  isotopy class  in $D^{2n+2}(C)$, $G$ generates no interior or exterior polygonal regions on $S$.  

\end{lemma}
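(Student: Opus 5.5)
We argue by contradiction: suppose $G$ has an interior or exterior polygonal region $R$ on $S$. In either case we will isotope $S$ across a ball to remove $R$, strictly lowering the pair $(|E\cap S|,|\pi_0(\mathbf{S'}\cap S)|)$ in lexicographic order. This contradicts minimality in the isotopy class.

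Since $R$ meets no curve of $\mathbf{S'}$ in its interior, its closure lies in a single piece $P_j$. Its boundary runs along edges of $G$, which are arcs in wedges adjacent to $P_j$, together with vertices on $E$.

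\emph{Interior case.} The curve $\partial R$ lies in $\widehat{\Sigma}_j$; when $R$ has vertices, the arcs of $\partial R$ pass through $E$, which lies in the wedge. Thus $R$ is a disk in $P_j$ with boundary in $\widehat{\Sigma}_j$. Because $(P_j,\widehat{\Sigma}_j)$ is an essential pair, $\partial R$ bounds a disk $D''\subset\widehat{\Sigma}_j$. The sphere $R\cup D''$ bounds a ball in $P_j$ by Lemma~\ref{Pirred}.
\begin{itemize}
\item If $R$ has no vertices, pushing $R$ through this ball past $\widehat{\Sigma}_j$ removes the closed curve $\partial R$ from $\mathbf{S'}\cap S$. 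This lowers the second coordinate without creating new points of $E\cap S$; we may assume $D''$ misses $E$, or else shrink it.
\item If $R$ has vertices, the ball is a ball in $P_j$ whose boundary contains part of $E$. Pushing $R$ through it and slightly beyond $\widehat{\Sigma}_j$ into the adjacent pieces removes the vertices of $\partial R$. It creates no new intersections with $E$, because $D''\cap E$ consists of arcs of $E$ with endpoints at vertices of $R$, and after the push these arcs are disjoint from $S$.
\end{itemize}
So the first coordinate strictly drops, exactly as in the last paragraph of the proof of Lemma~\ref{lemma:splitting_incomp}.

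\emph{Exterior case.} Here $\partial R=\gamma\cup\beta$ with $\gamma\subset\widehat{\Sigma}_j$ (possibly passing through $E$) and $\beta\subset\partial M\cap P_j$.
\begin{enumerate}
\item Boundary-incompressibility of the essential pair $(P_j,\widehat{\Sigma}_j)$ gives a disk $D''\subset\widehat{\Sigma}_j$ cobounded by $\gamma$ and an arc $\delta\subset\partial\widehat{\Sigma}_j$.
\item The disk $R\cup D''$ has boundary $\beta\cup\delta$ in $\partial P_j\setminus\widehat{\Sigma}_j$, after a small push off the wedge. By Lemma~\ref{boundarypieceinc}, this boundary bounds a disk $D'''$ there.
\item By Lemma~\ref{Pirred}, the sphere $R\cup D''\cup D'''$ bounds a ball.
\item Isotope $S$ through this ball, sliding $\beta$ along $D'''\subset\partial M$, as in the proof of Lemma~\ref{lemma:splitting_bdy_incomp}. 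This eliminates $\gamma$ and any vertices on it.
\end{enumerate}

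The main obstacle is the bookkeeping at vertices. When pushing $S$ past a wedge at points of $E$, the surface leaves $P_j$ into the \emph{other} pieces meeting along $E$, not just one neighbor, since several sheets meet there. We must check this introduces no new intersection curves with those sheets, so that the complexity genuinely decreases. To handle it, we take the ball thin near $D''$ and note that $D''\cap E$ consists of arcs whose endpoints are exactly the vertices of $R$. After the push, $S$ near these arcs is replaced by a sheet parallel to $D''$ on the far side. It meets each adjacent wedge only along pieces of arcs already in $G$, or not at all. So $|E\cap S|$ drops by the number of vertices of $R$, which suffices when that number is positive. When it is zero, $|E\cap S|$ is unchanged and the component count drops by at least one.
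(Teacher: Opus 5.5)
Your proof is correct and follows the paper's argument. Incompressibility (interior case) or $\partial$-incompressibility (exterior case) of the essential pair $(P_j,\widehat{\Sigma}_j)$ gives the disk in the wedge, Lemma~\ref{boundarypieceinc} closes it up on $\partial P_j\setminus\widehat{\Sigma}_j$ in the exterior case, and Lemma~\ref{Pirred} gives the ball to push across. Your vertex bookkeeping goes beyond what the paper writes, but the claim that the push creates no new intersection curves should be dropped. Near each arc of $D''\cap E$, the pushed sheet does cross the other $2n$ half-sheets, either splicing together old edges at paired vertices or running them out to $\partial M$ in the exterior case. The correct justification is simply that $|E\cap S|$ strictly drops whenever $\partial R$ has vertices, and that alone suffices for the lexicographic order.
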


\begin{figure}[htbp]
    \centering
    \includegraphics[scale=0.15]{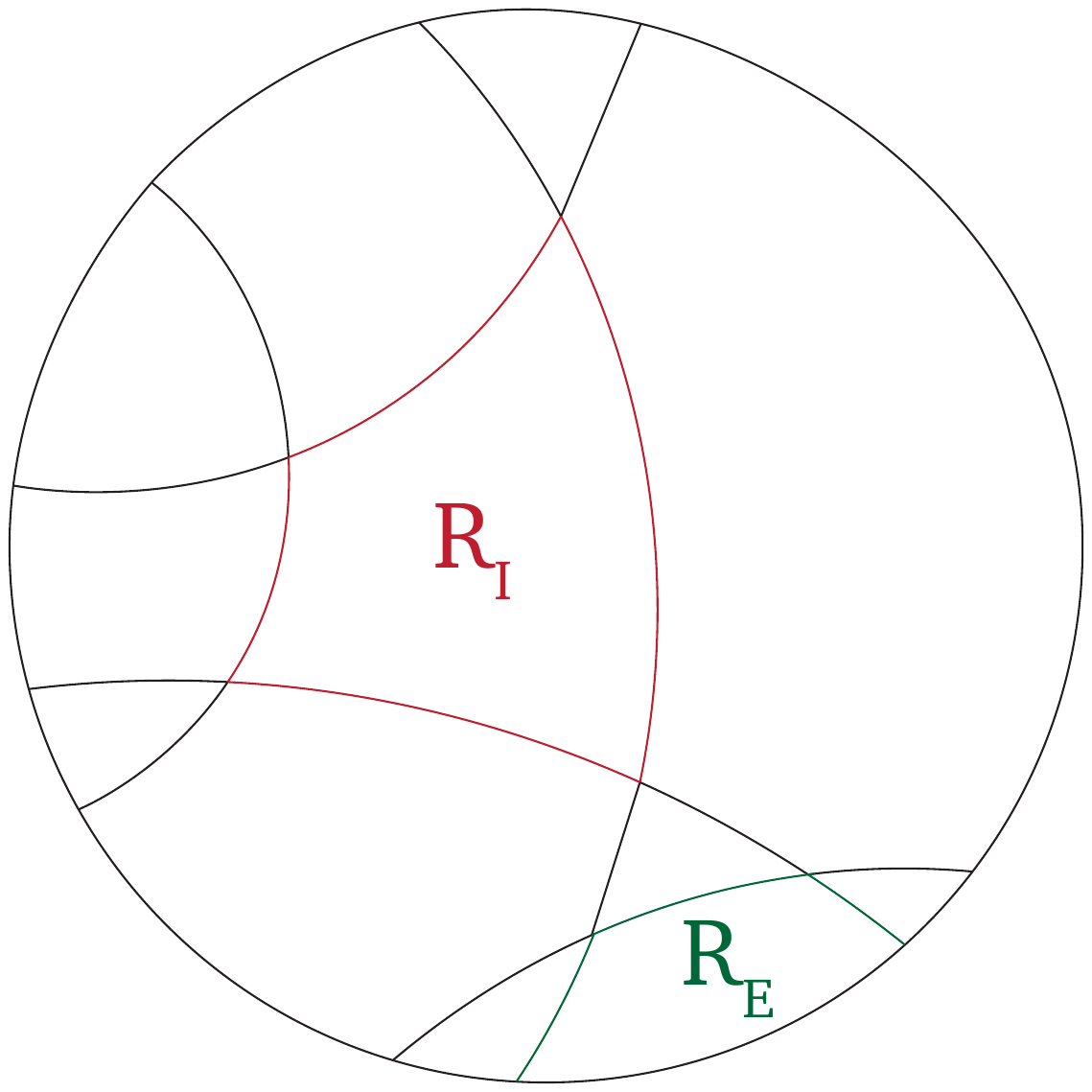}
    \caption{An interior polygonal region $R_I$ and an exterior polygonal region $R_E$ on a disk.}
    \label{fig:interiorPolygonal}
\end{figure}

\begin{proof}
Note any interior polygonal region $R_I$ has $\partial R_I \subset \widehat{\Sigma}_j$ and $R_I \subset P_j$. Because $(P_j,\widehat{\Sigma}_j)$ is an essential pair, $\partial R_I$ must bound a disk in $\widehat{\Sigma}_j$ and because $P_j$ is irreducible, the sphere given by these two disks must bound a ball in $P_j$. Hence, we can  isotope $R_I$ past $\widehat{\Sigma}_j$, decreasing complexity of $S$, a contradiction.

In the case there is an exterior polygonal region $R_E \subset P_j$, $\partial R_E$ intersects $\widehat{\Sigma}_j$ in one arc  $\alpha$ that can cross $E$ multiple times and otherwise intersects $\partial P_j \setminus \widehat\Sigma_j$ in a single arc $\beta$. 
Because $(P_j,\widehat{\Sigma}_j)$ is essential, $\widehat{\Sigma}_j$ is $\partial$-incompressible in $P_j$ and hence $\alpha$ cuts a disk $R'$ from $\widehat\Sigma_j$. 

Then, after a slight isotopy,  $R_E \cup R'$ is a disk in $P_i$ with boundary in $\partial P_i \setminus \widehat{\Sigma}_i$. By Lemma \ref{boundarypieceinc}, $\partial (R_E \cup R')$ must bound a disk $R''$ in $\partial P_i \setminus \widehat{\Sigma}_i$. Hence, $R_E \cup R' \cup R''$ is a sphere in $P_i$ which must bound a ball in $P_i$ by Lemma \ref{Pirred}. We can use that ball to isotope $R_E$ to $R'$ and a little further to lower the  complexity of $S$, a contradiction.
\end{proof}

\subsection{Eliminating Essential Spheres and Disks}\label{subsect:eliminatingSpheresAndDisks}

\begin{lemma}\label{lemma:EssentialSphere}
     The replicant $D^{2n+2}(C)$ contains no essential spheres.    
    \end{lemma}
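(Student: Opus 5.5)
Suppose for contradiction that $D^{2n+2}(C)$ contains an essential sphere. Choose one, $S$, of minimal complexity in its type, in the sense of Definition \ref{minimalsurf}. The plan has three steps: show that $S$ misses the separating starburst $\mathbf{S'}$, so that it lies in a single piece; then use Lemma \ref{Pirred} inside that piece; then rule out the possibility that the resulting ball contains part of the link or a stake.

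\textbf{Step 1: $S$ misses $\mathbf{S'}$.} Since $S$ is closed, $\partial S = \emptyset$. Hence no complementary region of the intersection graph $G$ can be exterior, and it suffices to produce an interior polygonal region.

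First suppose $S \cap \mathbf{S'} \neq \emptyset$. Then $G$ is a nonempty graph on the sphere $S$, whose components are either simple closed curves or components carrying vertices at $E\cap S$. In either case $G$ has a complementary region whose closure is an embedded disk:
\begin{itemize}
\item if $G$ contains a simple closed curve component, take an innermost one;
\item otherwise, an Euler characteristic count on $S^2$ shows some face is a disk.
\end{itemize}
To make the second case precise, I would take a component of $G$ that is innermost, meaning one of its complementary regions contains no other component of $G$. That component lies in a disk of $S^2$, so the complementary region of its regular neighbourhood on the innermost side is a disk. Some face inside that region is then an open disk with disk closure. Degeneracies, such as a face whose closure is not embedded because an edge is traversed twice, must be handled. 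Since intersection arcs end at points of $E$ where several sheets meet, I expect to pass to a smaller embedded subdisk in that case.

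This produces an interior polygonal region, contradicting the Graph Lemma \ref{lemma:GraphLemma}. Therefore $S \cap \mathbf{S'} = \emptyset$, and $S$ lies in a single piece $P_j$.

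\textbf{Step 2: $S$ bounds a ball in $P_j$.} By Lemma \ref{Pirred}, $P_j$ is irreducible, so $S$ bounds a ball $B$ in $P_j$.

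\textbf{Step 3: $S$ is inessential.} It remains to check that $B$ is a ball in $D^{2n+2}(C)$ containing neither link components nor stakes. This holds because $B \subset P_j$, and $P_j$ is already the link exterior with the stakes removed. So $S$ bounds a ball in $D^{2n+2}(C)$, contradicting its essentiality.

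Alternatively, one can reach the same contradiction through the embedding $\phi: P_j \to D^{2n}(C)$. There $S$ would be an essential sphere, contradicting the hyperbolicity of $D^{2n}(C)$. The argument would mirror Lemma \ref{lemma:pieceNoEssentialDisks}, noting that the complement of $S$ in $D^{2n}(C)$ on the side away from the ball contains higher-genus boundary, so that side cannot be the ball.

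The main obstacle is Step 1: guaranteeing an embedded disk face when $G$ has vertices of high valence at $E$. Here I would argue with the innermost component together with the Euler characteristic of the sphere.
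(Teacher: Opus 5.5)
Your proposal is correct and follows the paper's proof. You take a minimal essential sphere, use the Graph Lemma to force $S \cap \mathbf{S'} = \emptyset$, and conclude from irreducibility of the piece (Lemma \ref{Pirred}); your Step 1 spells out the existence of an interior polygonal region, which the paper asserts in one line. The degeneracy you worry about cannot occur, so there is no need to pass to subdisks: each face lies in a single piece, the $2n+2$ sectors at a vertex of $E \cap S$ lie in $2n+2$ distinct pieces, and the two sides of every edge lie in different pieces, so every boundary walk of a face is an embedded circle and the region on the empty side of an innermost component of $G$ is automatically an embedded disk.
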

\begin{proof}
Let $S$ be a minimal essential sphere in $D^{2n+2}(C)$. Since $S$ is a sphere, any innermost intersection curve in $G= \mathbf{S'} \cap S$ bounds a disk, yielding an interior polygonal region. Thus by Lemma \ref{lemma:GraphLemma}, $\mathbf{S'} \cap S = \emptyset$. 
Therefore $S$ lies in some piece. By Lemma \ref{Pirred}, $S$ cannot be essential.
\end{proof}

\begin{lemma}\label{lemma:EssentialDisks}
    The replicant $D^{2n+2}(C)$ contains no essential disks.
      \end{lemma}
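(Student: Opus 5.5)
We argue as in Lemma \ref{lemma:EssentialSphere}, using the Graph Lemma to push a minimal essential disk entirely into a single piece, and then invoke Lemma \ref{lemma:pieceNoEssentialDisks}. Let $D$ be an essential disk in $D^{2n+2}(C)$ of minimal complexity in its type. Since $D^{2n+2}(C)$ has no essential spheres (Lemma \ref{lemma:EssentialSphere}) and the pieces are irreducible (Lemma \ref{Pirred}), every isotopy used below keeps $D$ essential. In particular, $D$ is also minimal in its isotopy class, so Lemma \ref{lemma:GraphLemma} applies: the intersection graph $G$ on $D$, formed by $\mathbf{S'} \cap D$ together with the vertices $E \cap D$, has no interior or exterior polygonal regions.

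Next we show that $G$ must be empty. Suppose $G \neq \emptyset$ and consider the components of $G$ as a graph on the disk. First suppose some component is a closed curve, or more generally a subgraph that encloses a region disjoint from $\partial D$. Take an innermost such component. Some complementary region inside it is then a disk whose boundary misses $\partial D$, which is an interior polygonal region, a contradiction.

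Otherwise every component of $G$ is a tree-like union of arcs meeting $\partial D$. Each edge of $G$ is an arc of $\Sigma_j \cap D$, and every vertex is a transverse crossing point of $D$ with $E$. At such a vertex all $n+1$ sheets cross $D$, so each vertex has degree $2(n+1)$ and there are no degree-one interior vertices. Hence every component of $G$ reaches $\partial D$. Choose an outermost component, meaning one cutting off a region of $D$ that contains no other part of $G$. The region it cuts off from $D$ is a disk meeting $\partial D$ in a single arc. This is an exterior polygonal region, again a contradiction.

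The main obstacle is justifying carefully that an outermost region is genuinely an embedded disk meeting $\partial D$ in exactly one arc, as the definition requires. This is a planar combinatorics argument on the disk. If $G$ is connected and meets $\partial D$ in at least two points, then some complementary region adjacent to $\partial D$ is bounded by an arc of $\partial D$ and a path in $G$, and we take such a region. Because $D$ is simply connected, every complementary region is a disk. Cutting along the arcs of $G$ that meet $\partial D$ then yields a region whose boundary meets $\partial D$ in one arc.

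Therefore $D \cap \mathbf{S'} = \emptyset$, so $D$ lies in a single piece $P_i$. By Lemma \ref{lemma:pieceNoEssentialDisks}, $D^{2n+2}(C)$ has no essential disk lying in a piece, contradicting the choice of $D$.
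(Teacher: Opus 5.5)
Your proof is correct and follows the same route as the paper: take a minimal essential disk, use the Graph Lemma (Lemma \ref{lemma:GraphLemma}) to force $D \cap \mathbf{S'} = \emptyset$, and then apply Lemma \ref{lemma:pieceNoEssentialDisks}. The difference is that you spell out the planar-combinatorics fact the paper asserts in one line: an innermost cycle gives an interior polygonal region, and otherwise, since interior vertices have degree $2n+2$, $G$ is a forest with all leaves on $\partial D$, so an outermost complementary region is an exterior polygonal region.
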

\begin{proof}

Let $D$ be a minimal essential disk. If $D \cap \mathbf{S'}$ is nonempty, because any graph on a disk has either an interior or exterior polygonal region, $D$ cannot be minimal. Thus a minimal disk cannot intersect $\mathbf{S'}$, in which case it is not essential by Lemma \ref{lemma:pieceNoEssentialDisks}. 
\end{proof}

\subsection{Eliminating Essential Tori}\label{subsect:eliminatingEssentialTori}

\begin{lemma} \label{lemma:non_parallel_torus}
Let $A$ be an annulus in $P_i$ with boundary circles $\partial A_0$, $\partial A_1$ on $\widehat{\Sigma}_i$ such that $A \cap E = \emptyset$. Let $T$ be the torus formed in $D^{2n}(C)$ by gluing $A$ to its reflections, as in Figure \ref{fig:reflectingAnnuliToTori}. Then, $T$ is not parallel to $\partial M$.
\end{lemma}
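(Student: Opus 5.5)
We argue by contradiction, splitting on which component of $\partial M$ the torus $T$ would be parallel to. Suppose $T$ is parallel to a component $X$ of $\partial M$. Then $T$ and $X$ cobound a product region $W\cong T^2\times I$ in $D^{2n}(C)$, and $X$ is either the boundary of the handlebody $H_{2n,C}$ or a torus $\partial N(K)$ for a component $K$ of $L$.

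\textbf{Handlebody boundary.} The boundary of $H_{2n,C}$ is connected of genus $2nr-1$, where $r\ge 1$ is the number of stakes in $C$. Whenever $2nr\ge 3$, $X$ has genus at least $2$, so it cannot be homeomorphic to $T$, and this case is immediate. In the one remaining case $n=r=1$, $X$ is a torus containing both endpoints of each arc of $E=\{t,b\}\times I$. Since $T$ is built from $A$ and its reflections, $T\cap E=\emptyset$. Hence each arc of $E$ lies entirely in $W$, and so does a collar of $\partial \Sigma_1$ in $X$. I would use the essentiality of $\Sigma_1$ (Lemmas \ref{lemma:splitting_incomp} and \ref{lemma:splitting_bdy_incomp}, stated for $D^{2n}(C)$) to get a contradiction. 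Since $\Sigma_1$ is incompressible and $W$ is a product, $\Sigma_1\cap W$ consists of annuli. These annuli are vertical, running from $T$ to $X$, or are $\partial$-parallel. The curves $\Sigma_1\cap T$ are exactly $\partial A_0$ and $\partial A_1$ and their reflections, and these avoid $E$. On the other hand, the components of $\Sigma_1\cap X$ that meet $E$ would have to be joined through $W$ to curves on $T$. Since $E\subset \Sigma_1$ and $E\cap T=\emptyset$, this forces some annulus of $\Sigma_1\cap W$ to contain an arc of $E$ running from $X$ back to $X$. Such an annulus is $\partial$-parallel into $X$, and the corresponding $\partial$-compression would contradict Lemma \ref{lemma:splitting_bdy_incomp}.

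\textbf{Link torus.} Now suppose $X=\partial N(K)$. The curves $T\cap \bigcup_j\Sigma_j$ are the reflections of $\partial A_0$ and $\partial A_1$. Each of them is a curve on a sheet that is nontrivial on $T$ (we may assume this after the usual innermost-disk simplification using Lemma \ref{lemma:splitting_incomp}). I would again cut $W$ along the sheets. The pieces of $\Sigma_j\cap W$ are incompressible annuli in a product, so each is either vertical or parallel into $T$ or into $X$. Meanwhile $\Sigma_j\cap X$ is a union of meridian circles of $K$, one for each point of $K\cap \Sigma_j$. Matching vertical annuli forces $\partial A_0$ and $\partial A_1$ to be isotopic in $\widehat{\Sigma}_i$ to meridian circles around punctures of the same component $K$. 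It then follows that $A$ is isotopic in $P_i$, rel the wedge, to the annulus $\partial N(K)\cap P_i$ between two such punctures. By the symmetry of the construction, $K$ passes through every piece, and in $P_i$ the arc of $K$ between those punctures is enclosed by $A$.

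\textbf{Expected obstacle.} The main difficulty is this last configuration, where $A$ is a tube around a single arc of the tangle in $P_i$. Here I would argue that $T$ meets each sheet in exactly two circles while $K$ meets each sheet in exactly two points, and that the product structure of $W$ forces $K\cap P_i$ to be an unknotted arc parallel into $\widehat{\Sigma}_i$. Such an arc would give an exterior polygonal region, or a $\partial$-compression disk for $\widehat\Sigma_i$ in $P_i$, contradicting that $(P_i,\widehat{\Sigma}_i)$ is an essential pair. I expect the most care will be needed in this bookkeeping of how the annuli $\Sigma_j\cap W$ sit relative to the meridians of $K$, particularly in ruling out annuli that are parallel into $X$ rather than vertical.
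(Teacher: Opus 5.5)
Your handlebody case already proves the lemma. Your link-torus case should be deleted: it rests on a misreading and aims at a false claim. In the paper, $\partial M$ denotes only $\partial H_{2n,C}$. The lemma is later invoked precisely to conclude that a boundary-parallel torus of this form must be parallel to a \emph{link} component. Such parallelism genuinely occurs. For $n=1$, let $\kappa$ be any arc of $L\cap P_1$ and set $A=\partial N(\kappa)\cap P_1$. Its boundary is two meridian circles on $\widehat{\Sigma}_1\setminus E$, and $A\cup A^R$ is parallel to $\partial N(K)$ for the component $K=\kappa\cup\kappa^R$, however $\kappa$ sits in $P_1$. In fact, when $(P_1,\widehat{\Sigma}_1)$ is essential, $\kappa$ is \emph{never} parallel into $\widehat{\Sigma}_1$, since such a parallelism would compress the wedge. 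So the step in your ``expected obstacle'' paragraph cannot be carried out. That step claims the collar between $T$ and $\partial N(K)$ forces $K\cap P_i$ to be unknotted and parallel into $\widehat{\Sigma}_i$, but a collar of $\partial N(K)$ says nothing about how $\kappa$ is embedded.

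Your handlebody argument is correct and differs from the paper's at the one nontrivial point. Both arguments use the genus count $2nr-1$ to reduce to $n=r=1$. The paper then argues by linking: a longitude of a torus parallel to the torus $\partial M$ must link both stakes, so $\partial A_0,\partial A_1$ would have to run around the sheet and hence meet $E$. You instead note that $T\cap E=\emptyset$ places $E$ inside the collar $W$. You then combine essentiality of $\Sigma_1$ with the classification of incompressible surfaces in $T^2\times I$. This is more rigorous than the paper's sketch, at the cost of quoting that classification.

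Two points need tightening. First, to pass from incompressibility of $\Sigma_1$ in $D^2(C)$ to incompressibility of $\Sigma_1\cap W$ in $W$, you need that $\partial A_0,\partial A_1$ do not bound disks in $\Sigma_1$. This holds: otherwise an innermost one would give a compressing disk for $T$, but $T$ is incompressible because it is parallel to the incompressible torus $\partial M$. Second, the contradiction should use that each arc of $E$ joins the two \emph{distinct} circles $c_0=\Sigma_1\cap(S^2\times\{0\})$ and $c_1=\Sigma_1\cap(S^2\times\{1\})$; an arc from $c_0$ back to $c_0$ could lie in a vertical annulus. The annular component of $\Sigma_1\cap W$ containing such an arc then has boundary exactly $c_0\cup c_1\subset\partial\Sigma_1$. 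It would therefore be all of $\Sigma_1$, which is impossible since $\Sigma_1$ is punctured by $L$. No $\partial$-compression argument is needed.
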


\begin{proof}
When $P_i$ contains more than one stake or when $n > 1$ the genus of $\partial M$ is at least 2 and the result follows immediately. Thus, without loss of generality, we assume each piece contains only one stake and $n = 1$.  If $T$ is parallel to $\partial M$, the longitude of $T$ must link with each of the two stakes in $D^{2}(C)$. Hence, each of the boundary components $\partial A_0$ and $\partial A_1$ must be concentric on $\widehat{\Sigma}_i$ and wind once around the stake in $P_i$. But this forces them to intersect $E$, a contradiction. 
\end{proof}

\begin{figure}[htbp]
    \centering
    \includegraphics[scale=0.6]{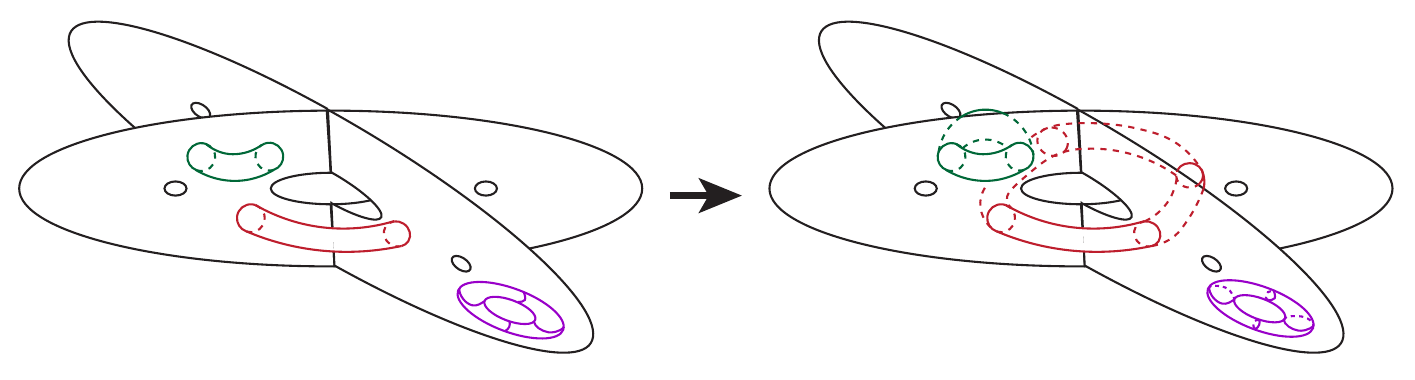}
    \caption{The cases of annuli in $P_i$  that don't intersect $E$ and the corresponding tori obtained by reflection.}
    \label{fig:reflectingAnnuliToTori}
\end{figure}

\begin{lemma}
    Let $T \subset D^{2n+2}(C)$ be a torus such that $T\cap \mathbf{S'} = \emptyset$. Then $T$ is not essential.
   \label{lemma:torus_in_piece}
\end{lemma}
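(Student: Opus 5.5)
Since $T\cap \mathbf{S'}=\emptyset$ and $T$ is connected, $T$ lies in the interior of a single piece $P_i$. The plan is to transport $T$ into $D^{2n}(C)$ via the embedding $\phi|_{P_i}: P_i\to D^{2n}(C)$ (valid also for $n=1$). Because $D^{2n}(C)$ is hyperbolic, $\phi(T)$ is either compressible there or boundary parallel there. We then pull each of these alternatives back to $P_i\subset D^{2n+2}(C)$ and conclude that $T$ is inessential in $D^{2n+2}(C)$.

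\textbf{Case 1: $\phi(T)$ is compressible in $D^{2n}(C)$.} Take a compressing disk $D$ and put it transverse to $\mathbf{S'}$ in $D^{2n}(C)$, minimizing $|D\cap \mathbf{S'}|$. Since $\partial D\subset T$ misses $\mathbf{S'}$, all curves of $D\cap\mathbf{S'}$ are closed curves, each lying in a single wedge away from $E$ (or arcs through $E$, which an innermost-region argument also handles). For an innermost curve, the disk it cuts off lies in some piece $P_j$ with boundary on $\widehat{\Sigma}_j$. Because $(P_j,\widehat{\Sigma}_j)$ is essential, that curve bounds a disk in $\widehat\Sigma_j$. By Lemma~\ref{Pirred}, the resulting sphere bounds a ball in $P_j$, and an isotopy through this ball removes the intersection. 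This is the same argument as in the Graph Lemma. Repeating it, we may assume $D\subset P_i$, so $D$ compresses $T$ in $P_i\subset D^{2n+2}(C)$.

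\textbf{Case 2: $\phi(T)$ is parallel to a torus boundary component of $D^{2n}(C)$.} Then $\phi(T)$ cobounds a product region $T^2\times I$ with that component. Torus boundary components of $D^{2n}(C)$ are either link components of $L$ or (when $n=1$) possibly genus-one boundary coming from the stakes. Lemma~\ref{lemma:non_parallel_torus} is the model for excluding the latter once $T$ misses $E$; in general the genus of $\partial M$ from stakes exceeds one except when $n=1$ with one stake per piece. The key step is to show that the product region lies entirely in $\phi(P_i)$. If the product region met $\mathbf{S'}$, then $\mathbf{S'}$ would meet $T^2\times I$ in incompressible surfaces missing $T$. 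By Lemmas~\ref{lemma:splitting_incomp} and~\ref{lemma:splitting_bdy_incomp}, these could only be tori parallel to the boundary or $\partial$-parallel annuli. But each sheet $\Sigma_i$ is an annulus whose boundary lies on the higher-genus part of $\partial M$ rather than on the link torus, so neither is possible. Hence the product region sits in $P_i$, the boundary torus it meets is a link component lying in $P_i$, and $T$ is boundary parallel in $D^{2n+2}(C)$ as well.

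If $\phi(T)$ were parallel to a higher-genus boundary component, that is impossible for a torus. So in all cases $T$ is inessential. I expect the main obstacle to be Case 2: controlling the product region when $n=1$, where $\phi$ on the whole manifold is a quotient map and the boundary torus could come from a single stake. There I would adapt the linking argument of Lemma~\ref{lemma:non_parallel_torus}: a torus parallel to a stake boundary in $D^2(C)$ must have a longitude encircling the stake, which forces it to cross $\Sigma_i$, contradicting $T\subset P_i$.
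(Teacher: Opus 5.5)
Your overall route is the paper's. You map $T$ into $D^{2n}(C)$ by $\phi|_{P_i}$, clear a compressing disk off $\mathbf{S'}$ with the innermost-region argument of the Graph Lemma and pull it back, and pull back a boundary parallelism to a link component.

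The gap is in the subcase you single out yourself: $n=1$ with one stake per piece, where $\partial M$ is a torus. Your proposed step says that a torus parallel to $\partial M$ has a longitude encircling the stake and therefore must cross $\Sigma_1$. This is false. Here $H_{2,C}$ is (an annulus)$\times I$, a solid torus. Take a small horizontal circle $\ell$ around the stake of $C$, at height $1/2$ inside the charm disk; it is isotopic in $H_{2,C}$ to the core. So $\partial N(\ell)$ lies in $\phi(P_i)$, misses $\Sigma_1$ and $L$, has a longitude encircling the stake, and still cobounds a product region with $\partial M$ in $H_{2,C}$. Hence no argument that ignores the link can exclude this subcase.

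What does exclude it, and what the paper uses, is that the product region must lie in the link exterior. The other piece, or equivalently $\Sigma_1\setminus N(L)$, is connected, disjoint from $\phi(T)$, and meets $\partial M$, so it lies on the collar side of $\phi(T)$. But it contains strands of $L$, since the charm conditions force $L$ to puncture $\Sigma_1$; that is a contradiction. This is exactly why $\partial N(\ell)$ is not parallel to $\partial M$ in $D^2(C)$; it is instead compressible, which your Case 1 handles.

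The same connectivity observation should replace your justification in the link-component subcase. In the link exterior each sheet is a punctured annulus with meridian boundary circles on $\partial N(L)$. So it is not true that its boundary avoids the link tori. Moreover, $\partial$-parallel annuli with meridian boundary on $\partial N(K)$ are precisely what your classification of surfaces in $T^2\times I$ permits.

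The argument that works is this. Each sheet is connected, misses $\phi(T)$, and reaches $\partial M$, which is not in the product region. So every sheet misses the product region, $K$ misses $\mathbf{S'}$, and the product region lies in $\phi(P_i)$ and pulls back to $D^{2n+2}(C)$. With these two repairs your proof agrees with the paper's. It also supplies the justification the paper leaves implicit when it asserts that the isotopy to $K$ stays inside $P_i$.
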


\begin{proof}
The torus $T$ lies in some piece $P_i$, and therefore $\phi(T) \subseteq D^{2n}(T)$ is compressible or $\partial$-parallel. Let $D$ be a compression disk for $\phi(T)$. 

Consider the intersection graph $G = \mathbf{S'} \cap D$. Pick $D$ minimal among all such compression disks. Note by the argument presented in the proof of Lemma \ref{lemma:GraphLemma}, we may isotope away any interior polygonal regions on $D$ without altering $\partial D$. 
Thus, since $D$ is minimal, $G$ must contain no interior polygonal regions. 

Note $\partial D \subset T$, so $\partial D \cap \mathbf{S'} = \emptyset$, thus $G$ contains no properly embedded arcs of intersection. Thus if $G \neq \emptyset$, $G$ must contain an interior polygonal region, contradicting the minimality of $D$. Thus $D$ must lie in $P_i$, and the pre-image $\phi^{-1}(D)$ forms a compression disk for $T$ in $D^{2n+2}(C)$.
 
Suppose $\phi(T)$ is incompressible and  $\partial$-parallel.
If $\phi(T)$ is parallel to $\partial M$, then $\phi(T)$ is one boundary component of a $T \times I$ with $\partial M$ as the other boundary component. But since $\phi(T)$ is contained in $P_i$ and the other pieces contain some part or all of various link components, this cannot be the case. 
 
 Then, without loss of generality, assume $\phi(T)$  is parallel to a link component $K$. Thus, there is an isotopy taking $\phi(T)$ to $K$, by retracting meridian circles until they form meridians of $K$.  Note this isotopy takes place entirely inside $P_i$. Then the image of this isotopy under $\phi^{-1}$, yields an isotopy taking  $T$ to $\phi^{-1}(K)$ in $D^{2n+2}(C)$, a contradiction.
\end{proof}

We follow the arguments in Lemma 5.1 of \cite{adams2021lower} to rule out essential tori.

\begin{lemma}\label{primelink}
    The link $L'$ in $D^{2n+2}(C)$ is prime, by which we mean $D^{2n+2}(C)$ has no essential annulus realized as an essential sphere  punctured twice by $L'$.
\end{lemma}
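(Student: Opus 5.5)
Suppose for contradiction that $S$ is a sphere in $S^2\times I$ (away from the stakes) that meets $L'$ in exactly two points. Assume also that the twice-punctured sphere $A = S\setminus N(L')$ is an essential annulus in $D^{2n+2}(C)$. The plan is to push $A$ into a single piece $P_i$ and then transport it by $\phi$ into $D^{2n}(C)$. There it would be an essential twice-punctured sphere, which contradicts the hyperbolicity of $D^{2n}(C)$.

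\textbf{Step 1: make $A$ miss the central axis and the sheets.} Choose $A$ of minimal complexity in its isotopy class, in the sense of Definition \ref{minimalsurf}. Then consider the intersection graph $G = \mathbf{S'} \cap A$. Since $\partial A$ consists of meridians of $L'$, we can arrange that $\partial A$ is disjoint from $\mathbf{S'}$ by sliding the meridians off the sheets along $L'$. On the sphere $S$, every component of $G$ together with its complementary regions sits in a planar surface. So if $G \neq \emptyset$, there is an innermost region of $S \setminus G$. It is a disk containing neither puncture, or at worst it contains punctures only in a way we handle below.
\begin{itemize}
\item If some complementary region of $G$ on $S$ is a disk avoiding both punctures, it is an interior polygonal region. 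Lemma \ref{lemma:GraphLemma} then contradicts minimality.
\item Since $S$ has only two punctures and $G$ is a graph on a sphere, an Euler characteristic or innermost-region count shows that unless $G=\emptyset$, such a disk region exists. A graph on $S^2$ whose complementary regions all contain a puncture has at most two faces. So $G$ would be a single collection of parallel circles separating the two punctures, with no vertices on $E$.
\end{itemize}
The main obstacle is this remaining case. Here $A \cap \mathbf{S'}$ is a set of essential circles on $A$ (cores of the annulus) lying on wedges and disjoint from $E$.

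\textbf{Step 2: the remaining case.} Take two adjacent intersection circles. They cobound a sub-annulus $A' \subset A$ lying in a piece $P_j$, with $\partial A' \subset \widehat{\Sigma}_j$ and $A' \cap E = \emptyset$. Each circle bounds a disk in $\widehat{\Sigma}_j$ punctured once by $L'$, since it is a meridian-like curve on the twice-punctured sphere. Using incompressibility of the sheets (Lemma \ref{lemma:splitting_incomp}), such a circle on $\Sigma_j$ must bound a once-punctured disk in $\Sigma_j$. Otherwise it would be essential in the sheet while bounding the once-punctured disk on $S$, and that would yield a compression after tubing along $L'$. So $A'$ together with these two once-punctured disks forms a twice-punctured sphere inside $P_j$. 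If $A'$ is essential, we push it into the piece and use the next step. If it is inessential, then since $P_j$ is irreducible (Lemma \ref{Pirred}), $A'$ is parallel into $\widehat{\Sigma}_j$ through a product region. We isotope across it to remove two intersection circles, again contradicting minimality. An outermost sub-annulus is handled the same way, using a disk in $A$ capped by a single puncture. In the end $A \cap \mathbf{S'} = \emptyset$.

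\textbf{Step 3: transport to $D^{2n}(C)$.} Now $A$ lies in some piece $P_i$, and $\phi|_{P_i}$ is an embedding into $D^{2n}(C)$. The image $\phi(A)$ is a twice-punctured sphere. Since $D^{2n}(C)$ is hyperbolic, $\phi(A)$ is inessential. Any compressing or $\partial$-compressing disk for $\phi(A)$, or any ball or solid-tube region realizing its inessentiality, can be made disjoint from the sheets of $D^{2n}(C)$. This uses the innermost and outermost disk arguments of Lemmas \ref{lemma:torus_in_piece} and \ref{lemma:GraphLemma}, together with the essentiality of $(P_i,\widehat{\Sigma}_i)$ and Lemma \ref{boundarypieceinc}. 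Concretely, we take a minimal compressing disk or parallelism region, remove interior polygonal regions exactly as in the proof of Lemma \ref{lemma:torus_in_piece}, and conclude that it lies in $P_i$. Pulling back by $\phi^{-1}$ then exhibits $A$ as inessential in $D^{2n+2}(C)$, which is a contradiction. Hence $L'$ is prime.
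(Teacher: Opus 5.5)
Your Steps 1 and 3 can be made to work. In Step 1, a component of $G$ containing a vertex has every vertex of valence $2n+2\ge 4$, so it has at least three disk faces, and an innermost-component argument then yields an unpunctured disk face. In Step 3, the ball bounded by $\phi(A)$ in $D^{2n}(C)$ automatically misses every sheet, since each sheet is connected, disjoint from $\phi(A)$, and meets $\partial M$.

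The gap is in Step 2, and it has two parts.

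\begin{itemize}
\item \textbf{The once-punctured claim.} You assert that each circle $\gamma$ of $A\cap\mathbf{S'}$ bounds a \emph{once}-punctured disk in $\widehat{\Sigma}_j$, but this is not justified. The circle $\gamma$ does bound a disk $D_\gamma$ in a half-sheet, and parity makes $|D_\gamma\cap L'|$ odd. Nothing you cite rules out three punctures. A curve bounding a $3$-punctured disk in the sheet and a once-punctured disk on $S$ gives no compressing disk for the sheet, and ``tubing along $L'$'' does not produce one. Step 3 cannot help either, since the capped surface would then be a $4$-punctured sphere.
\item \textbf{The middle sub-annulus reduction.} Even granting once-punctured caps, this step fails. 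If $\gamma_1,\gamma_2$ are nested in the half-sheet, $A'\cup D_1\cup D_2$ is not an embedded sphere. If they are disjoint, the ball bounded by $A'\cup D_1\cup D_2$ contains an arc of $L'$ running from $D_1$ to $D_2$. So $A'$ is parallel to $D_1\cup D_2$ tubed along that arc, not into $\widehat{\Sigma}_j$. Pushing it across still leaves two meridional circles on the sheet, so the complexity does not drop.
\end{itemize}

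The missing idea is the paper's doubling step, applied only to an innermost nontrivial curve $\gamma$ on $S$. Such a $\gamma$ bounds a once-punctured disk $D\subset P_i$ with $\partial D\subset\widehat{\Sigma}_i$ and no other intersection curves inside.
\begin{itemize}
\item Reflect $D$ across $\Sigma_i$ in $D^{2n}(C)$. The result is a twice-punctured sphere there, which by hyperbolicity bounds a ball meeting $L$ in a single unknotted arc.
\item That ball and arc are invariant under the reflection, which swaps the arc's endpoints. So the arc crosses $\Sigma_i$ exactly once; this is what actually forces the cap in the sheet to be once-punctured.
\item Hence $D$ cuts from $P_i$ a ball containing a single unknotted arc running from $D$ to the sheet.
\item $D$, with its puncture sliding along that arc, can then be pushed across $\Sigma_i$, lowering complexity.
\end{itemize}
Repeating this removes all intersection circles, and your Step 3 then finishes the argument.
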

\begin{proof}
    Suppose such a twice-punctured sphere $S$ exists. We choose $S$ to be minimal complexity. 
If $S$ intersects one of the surfaces $\Sigma_i$ in a closed curve that is trivial on the punctured sphere, we can isotope to remove the intersection.  If $S \cap E = \emptyset$, then if there is a nontrivial intersection curve on $S$ with one $\Sigma_i$, choose an innermost such, so it bounds a punctured disk $D$ on $S$ containing no other intersection curves. Then, $D$ is properly embedded in a $P_i$ with boundary in $\widehat\Sigma_i$. We can reflect $D$ in $D^{2n}(C)$ to obtain a twice-punctured sphere in $D^{2n}(C)$, which by hyperbolicity, must bound a ball containing an unknotted arc of $L$. However, then $D$ cuts a ball containing an unknotted arc from $P_i$ in $D^{2n+2}(C)$, and we can isotope $D$ to $\Sigma_i$ and through to lower the complexity of $S$, a contradiction.

Now consider the case that $S \cap E \neq \emptyset$. Let $G$ be the $2n+2$-valent graph that results on $S$.

Suppose first that $G$ consists of a single connected component. 
 Then, for the corresponding unpunctured sphere, Euler characteristic $v - e + f = 2$ yields $f = nv + 2$. Since $n, v \geq 1$, there are at least three faces, only two of which can be punctured. Hence there is an interior polygonal region, contradicting the Graph Lemma.

    Now, suppose there is more than one component of $G$. 
    Choose one component. It contains at least three faces, and if any one of them does not contain a puncture or another component, it contradicts the Graph Lemma. So, at least one of these faces contains another component and possibly all three do. Take an innermost component in each face, or take the face itself.  Each such component contains  at least two faces (not counting the outer one), so we have at least four such faces, and only two punctures to place in them. So, there will be a face that contradicts the Graph Lemma. 
     \end{proof}

\begin{lemma} \label{lemma:EssentialTorus}
    $D^{2n+2}(C)$ contains no essential tori. 
\end{lemma}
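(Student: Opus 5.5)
We argue by contradiction. Let $T$ be an essential torus in $D^{2n+2}(C)$, chosen of minimal complexity in its type in the sense of Definition \ref{minimalsurf}. If $T\cap\mathbf{S'}=\emptyset$, Lemma \ref{lemma:torus_in_piece} already rules it out. So assume $T$ meets $\mathbf{S'}$, and study the intersection graph $G=\mathbf{S'}\cap T$. We follow the template of Lemma 5.1 of \cite{adams2021lower}.

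\textbf{Step 1: reduce the graph.} Any trivial simple closed curve of $G$ on $T$, or any component of $G$ contained in a disk on $T$, yields an interior polygonal region. The innermost disk argument and Lemma \ref{lemma:GraphLemma} exclude this. So every component of $G$ is essential on $T$.

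\textbf{Step 2: the case $T\cap E\neq\emptyset$.} Here some component of $G$ has vertices of valence $2n+2$. If $G$ is connected and its complementary regions are disks, Euler characteristic gives $v-e+f=0$ with $e=(n+1)v$, so $f=nv>0$. Then there is a disk face, which is an interior polygonal region, contradicting the Graph Lemma. If a complementary region of $G$ is not a disk, it must be an annulus on $T$. In that case the components of $G$ containing vertices lie in annular regions of $T$. Any vertex-containing component of $G$ in such an annulus still has at least one disk face, by the same Euler characteristic count on the annulus (there $\chi=0$ as well). This again contradicts Lemma \ref{lemma:GraphLemma}. Hence $T\cap E=\emptyset$.

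\textbf{Step 3: the case $T\cap E=\emptyset$.} Now $G$ is a nonempty family of parallel essential curves on $T$. These curves cut $T$ into annuli $A$, each properly embedded in some piece $P_i$ with $\partial A\subset\widehat\Sigma_i\setminus E$.
\begin{itemize}
\item Take one such annulus $A$ and map it by $\phi$ into $D^{2n}(C)$. Reflecting it across the sheets gives a torus $T'$, as in Lemma \ref{lemma:non_parallel_torus}, and that lemma says $T'$ is not parallel to $\partial M$.
\item Since $D^{2n}(C)$ is hyperbolic, $T'$ must be compressible or parallel to a link component. If $T'$ compresses, cut the compressing disk down with the Graph Lemma arguments. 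The case in which it meets $E$ is handled by the lexicographic minimality as in Lemmas \ref{lemma:splitting_incomp} and \ref{lemma:splitting_bdy_incomp}. The result is either a compressing disk for $A$ in $P_i$ or a $\partial$-compression of $A$ into $\widehat\Sigma_i$.
\item A compressing disk for $A$ in $P_i$ would compress $T$. A $\partial$-compression would make $A$ parallel into $\widehat\Sigma_i$, so we could isotope $A$ across the sheet and lower complexity. Both contradict the choice of $T$.
\item If $T'$ is parallel to a component of $L$, then $A$ is parallel into $\widehat\Sigma_i$ through a region avoiding the link, or it cobounds with $\widehat\Sigma_i$ a solid torus containing an arc of $L$. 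In the first case we push $A$ across and lower complexity. In the second case, reassemble the pieces of $T$: each annulus bounds such a region around a strand, so the union shows $T$ is parallel to a component of $L'$, contradicting essentiality.
\end{itemize}
The step where a reassembled region is a knotted tube around a strand uses Lemma \ref{primelink}. That lemma excludes a twice-punctured sphere that cuts off a local knot, which is what such a configuration would otherwise produce.

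\textbf{Main obstacle.} I expect Step 3 to be the hardest part. The compressing disk or parallelism for the reflected torus $T'$ in $D^{2n}(C)$ must be pulled back to a single piece so that it says something about $A$ and hence about $T$. This is where the essential-pair hypothesis and Lemma \ref{boundarypieceinc} do the work. I would mimic the torus argument of \cite{adams2021lower} closely at this point.
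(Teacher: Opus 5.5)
\textbf{Gap: the boundary-parallel case of Step~3.} Your treatment of $T\cap\mathbf{S'}=\emptyset$ and of $T\cap E\neq\emptyset$ is fine. So is the compressible half of Step~3. Handling every annulus $A$ of $T$ the same way (reflect it to $T'$ in $D^{2n}(C)$, then cut a compressing disk for $T'$ down to a compressing or $\partial$-compressing disk for $A$ in $(P_i,\widehat\Sigma_i)$) is tidier than the paper's split into annuli with both boundary curves on one sheet versus annuli running between distinct sheets.

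The problem is the boundary-parallel case. Parallelism of $T'$ to a link component in $D^{2n}(C)$ only gives information about one annulus at a time. Your sentence ``each annulus bounds such a region around a strand, so the union shows $T$ is parallel to a component of $L'$'' is where all the work lies, and it is asserted without argument. You need to show three things:
\begin{enumerate}
\item the regions for adjacent annuli lie on the same side of $T$ along their common curve $\gamma$;
\item the arcs of $L'$ inside them close up into a single component with a product neighbourhood;
\item nested curves of $G$ on a sheet do not spoil the gluing.
\end{enumerate}
Your description of the regions (``a solid torus containing an arc of $L$'') is also not what actually occurs. If $\partial A$ is nested on one sheet, $T'$ cannot be parallel to a link component at all, because the reflection fixes the annulus between the two curves. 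Otherwise the region is a ball cobounded by $A$ and two disks of the wedge, each punctured exactly once, and it contains a single unknotted arc. Both facts follow from the reflection symmetry of $T'$, but they need proof. Only with this description do adjacent regions share the disk bounded by $\gamma$, and so lie on a consistent side of $T$ and glue to a regular neighbourhood of one component of $L'$.

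\textbf{How the paper avoids the gluing.} Suppose some annulus has both boundary curves on one sheet and its reflected torus is parallel to a link component. The paper uses that single tube to compress $T$ meridionally along the two once-punctured disks. It then applies Lemma~\ref{primelink} to the resulting twice-punctured sphere, which forces $T$ to be boundary parallel. When every annulus runs between distinct sheets, it first isotopes $T$ (following Section 5 of \cite{adams2021lower}) so that $T$ consists of reflected copies of a single annulus, after which the parallelism region in $D^{2n}(C)$ lifts directly.

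This is also the real role of primeness. In your setup it is not needed to exclude ``knotted tubes'': tubes obtained from parallelism in $D^{2n}(C)$ are pieces of a product neighbourhood of a link component cut along once-punctured disks, so they are already unknotted. To finish your route, either prove the gluing facts above, or switch to meridional compression plus Lemma~\ref{primelink} as soon as one annulus is a tube.
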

\begin{proof}
    Let $T$ be an essential torus in $D^{2n+2}(C)$. Choose $T$ to be minimal complexity in its type.

    First suppose $T$ does not intersect $E$. Then by incompressibility of each $\Sigma_i$, $T$ intersects $\mathbf{S'}$ in a finite number of parallel curves on $T$ that divide $T$ into a collection of disjoint annuli, each lying in a single piece. (See Figure \ref{fig:reflectingAnnuliToTori} again.)
    
    If some annulus $A$ in a piece $P_i$ has both boundary circles on the same sheet $\Sigma_i$, it replicates in the $2n$-replicant $D^{2n}(C)$ to a collection of $n$ tori, none of which can be essential. Let $T'$ be one of these tori. If $T'$ is boundary parallel, then it is boundary parallel to a link component by Lemma \ref{lemma:non_parallel_torus}, so $A$ is also parallel to a link component. This parallellism lets us meridianally compress $T$ in the $2n$-replicant, either yielding an essential annulus and contradicting primeness of the link from Lemma \ref{primelink} or showing $T$ was boundary parallel to begin with, a contradiction. 
    
    If $T'$ is compressible, it cannot compress along a curve parallel to the boundary of $A$, as then $A$ and hence $T$ would be compressible. Thus,  there is a compression disk with longitude boundary on $T'$ if the boundaries of $A$ are not concentric and meridian boundary if they are concentric. 
    By incompressibility of $\Sigma_i$, we may isotope the compression disk so it intersects $\Sigma_i$ in a single arc, and its boundary subdivides into two arcs, one on $A$ and one on the reflection of $A$. Let $D'$ be the sub-disk bounded by the boundary arc in $A$ and the arc in $\Sigma_i$. 

Let $D''$ be the disk boundary of $N(A \cup D')$. That is, $D''$ is obtained by gluing  the disk obtained by cutting $A$ open along $\partial D \cap A$ to two copies of $D'$ along $\partial D \cap A$. 
Then, $\partial D'' \subset \Sigma_i$, so it bounds a disk $D'''$ in $\Sigma_i$. Then $D''\cup D'''$ is a sphere in $P_i$ which must bound a ball containing $A$, implying $A$ is either compressible in the ball, or can be isotoped through the ball to lower the complexity of $T$, in either case a contradiction.

    Thus, every annulus in each piece has its boundary circles on distinct sheets in the starburst. Following the arguments in Section 5 of \cite{adams2021lower}, we may use essentiality of the torus and the sheets in our starburst to isotope $T$ so that it is composed of reflected copies of the same annulus $A$. But then these reflected copies give us a torus $T''$ in the $2n$-replicant, which is either boundary parallel or compressible. Boundary parallelisms of $T''$ must be to the link by Lemma $\ref{lemma:non_parallel_torus}$, which force $A$ to be boundary parallel to an arc in a link component. Therefore, $T$ is boundary parallel in $D^{2n+2}(C)$, a contradiction. 
    
If $T''$ is instead compressible, the compression must be to the outside, so we may choose it to be a longitude. By incompressibility and choosing a minimal such compression disk, we may isotope the disk such that it is composed of reflected copies of a disk in $C$, with boundary that is made up of one arc a nontrivial arc on $A$ and one arc on a wedge $\widehat{\Sigma}_i$.
We may reflect the disk back and forth in $D^{2n+2}(C)$ to obtain a compression disk for $T$ in $D^{2n+2}(C)$, a contradiction to its being essential here.

    Finally, suppose $T$ intersects $E$. Consider the intersection graph $G$ of $T$ with $\mathbf{S'}$. Because sheets in $\mathbf{S'}$ intersect only at $E$ and all sheets intersect there, and because $T$ separates $D^{2n+2}(C)$,  the intersection graph has an even number of vertices, each of valency $2n+2$.

First, suppose $G$ has only one component. Then, there must be a complementary region that is a disk, again yielding an interior polygonal region that is a disk, and contradicting the Graph Lemma. If $G$ has more than one component, take an innermost such. It will still have an interior polygonal region and thus generate a contradiction.
\end{proof}

\subsection{Eliminating Essential Annuli}\label{subsect:eliminatingEssentialAnnuli}

Let $A$ be an annulus and $\partial A_0, \partial A_1$ denote its boundary circles. We classify annuli in $D^{2n+2}(C)$ into three cases. Let $\partial N(L')$ denote the union of the link boundary components.
\begin{enumerate}
\item $A$ is a \textit{Type I} annulus if $\partial A_0 \subset \partial M$ and $\partial A_1 \subset \partial M$
\item $A$ is a \textit{Type II} annulus if $\partial A_0 \subset \partial M$ and $\partial A_1 \subset \partial N(L')$
\item $A$ is a \textit{Type III} annulus if $ \partial A_0 \subset \partial N(L')$ and $\partial A_1 \subset \partial N(L')$
\end{enumerate}

We focus at first on eliminating essential Type I annuli. We do so by first using the Graph Lemma to control the intersection curves with $\mathbf{S'}$. After doing so we are left with two cases which we tackle individually.
We eliminate Type II annuli by reducing them to Type I annuli. Type III annuli arise only in Seifert-fibered spaces. We eliminate them by arguing $D^{2n+2}(C)$ is never Seifert fibered.

\begin{remark}\label{annuliinmanifolds}
It is a well-known fact that an annulus in an irreducible, $\partial$-irreducible 3-manifold is $\partial$-parallel if and only if it is $\partial$-compressible. (See for example \cite{hatcher3manifold}.) We use this fact repeatedly without further mention.
\label{rmrk:boundary_interchangability}
\end{remark}

\begin{lemma} \label{annulusmissingsheet}
Let $A$ be a Type I annulus in $D^{2n+2}(C)$ such that $A$ does not intersect a particular sheet $\Sigma_i$. Then $A$ cannot be essential.
\label{lemma:splitting_annulus}
\end{lemma}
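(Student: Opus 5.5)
We argue by contradiction: let $A$ be an essential Type I annulus in $D^{2n+2}(C)$ with $A\cap\Sigma_i=\emptyset$, chosen of minimal complexity among essential Type I annuli disjoint from $\Sigma_i$. The sheet $\Sigma_i$ is essential by Lemmas \ref{lemma:splitting_incomp} and \ref{lemma:splitting_bdy_incomp}, so cutting along it is natural. Since $\Sigma_i$ is the fixed set of a reflection, $D^{2n+2}(C)$ cut along $\Sigma_i$ consists of copies of $\frac{1}{2}D^{2n+2}(C)$. Because $\Sigma_i$ is an annulus meeting $E$, and $E$ runs through every sheet, $D^{2n+2}(C)\setminus\Sigma_i$ is in fact a single copy of $\frac12 D^{2n+2}(C)$, with two copies of $\Sigma_i$ on its boundary. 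So $A$ lies in $\frac{1}{2}D^{2n+2}(C)$ and misses the faces coming from $\Sigma_i$. We then push $A$ forward by $\phi$ into $D^{2n}(C)$. For $n\ge 2$, $\phi$ is an embedding, so $\phi(A)$ is an embedded annulus with boundary on $\partial H_{2n,C}$. For $n=1$, $\phi$ is the quotient identifying the two sides of $\Sigma_i$; since $A$ misses $\Sigma_i$, it is still embedded.

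Because $D^{2n}(C)$ is hyperbolic, $\phi(A)$ is inessential there. Since $D^{2n}(C)$ is irreducible and $\partial$-irreducible, $\phi(A)$ is either compressible or $\partial$-compressible, and by Remark \ref{annuliinmanifolds} we may treat this as: compressible or $\partial$-parallel.

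\emph{Compressible case.} Take a compression disk $D$ for $\phi(A)$, minimal with respect to $|D\cap \phi(\mathbf{S'})|$. Since $\partial D\subset\phi(A)$ misses the sheets, there are no arc intersections. Closed curves on $D$ give interior polygonal regions, which can be removed as in the Graph Lemma (Lemma \ref{lemma:GraphLemma}) and in the proof of Lemma \ref{lemma:torus_in_piece}, using essentiality of the pairs $(P_j,\widehat\Sigma_j)$ and irreducibility of the pieces (Lemma \ref{Pirred}). The one subtlety is that the image of $\Sigma_i$ under $\phi$ must be included among the surfaces we clear; it is a union of wedges, so the same argument applies. Once cleared, $D$ lies in $\phi(\frac12 D^{2n+2}(C))$ away from the image of $\Sigma_i$, and $\phi^{-1}(D)$ compresses $A$ in $D^{2n+2}(C)$, a contradiction.

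\emph{$\partial$-parallel case.} $\phi(A)$ cobounds a solid torus $V$ with an annulus $A'\subset\partial M$. If $V$ lies inside $\phi(\frac12 D^{2n+2}(C))$ and avoids the image of $\Sigma_i$, we pull it back by $\phi^{-1}$ and $A$ is $\partial$-parallel, a contradiction. Otherwise $V$ contains part of the image of $\Sigma_i$ or extends into the complementary pieces. Since $V$ is a product region disjoint from $L$, it cannot contain any piece: every piece contains a crossing of a link strand, and its stakes give $\partial M$ extra genus, which a product region cannot contain. Hence each sheet meets $V$ in disks or in annuli parallel into $A'$. A $\partial$-compressing disk for $\phi(A)$ inside $V$ can then be chosen disjoint from the sheets, or, by an outermost-arc argument using $\partial$-incompressibility of the wedges (as in Lemma \ref{lemma:splitting_bdy_incomp}), isotoped off them. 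Pulling this disk back gives a $\partial$-compression of $A$ in $D^{2n+2}(C)$, which by Remark \ref{annuliinmanifolds} makes $A$ $\partial$-parallel, a contradiction.

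The main obstacle is the $\partial$-parallel case. The parallelism region in $D^{2n}(C)$ may wrap around the core through pieces that do not correspond to pieces in the preimage of $A$. The step I would attempt first is to show that the $\partial$-compressing disk can be made disjoint from all sheets, by minimizing its intersection with $\mathbf{S'}$ and invoking the Graph Lemma's exterior-region argument. This relies on the endpoints of the compressing disk lying on $\partial M$, away from $E$.
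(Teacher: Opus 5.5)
There is a genuine gap, and it starts at ``Since $\partial D\subset\phi(A)$ misses the sheets, there are no arc intersections.'' The hypothesis is only that $A$ misses the single sheet $\Sigma_i$. $A$ may cross every other sheet, for instance in spanning arcs or in core-parallel circles. This is exactly the situation in the later applications of the lemma, such as the doubled disks in Lemma \ref{lemma:annulus_double_back} and in the proof of Lemma \ref{lemma:arcAnnulusInessential}. So $D\cap\phi(\mathbf{S'})$ generally contains arcs ending on $\partial D$. The Graph Lemma cannot remove the resulting outermost regions: their outer boundary arc lies on $\phi(A)$, not on $\partial P_j\setminus\widehat\Sigma_j$, so Lemma \ref{boundarypieceinc} gives nothing.

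The same issue defeats the step you propose for the $\partial$-parallel case. The boundary arc of a $\partial$-compressing disk is essential in $\phi(A)$, so it must cross every sheet that meets $\phi(A)$ in core-parallel circles. Hence a $\partial$-compressing disk disjoint from all sheets need not exist. A minor point: $\Sigma_i$ does separate $D^{2n+2}(C)$, into two copies of $\frac12 D^{2n+2}(C)$. This does not affect placing $A$ in one half.

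The missing idea is that you only have to stay off $\phi(\Sigma_i)$, and this needs no surgery on disks. As in the paper, irreducibility and $\partial$-irreducibility of $D^{2n}(C)$ turn compressibility or $\partial$-parallelism of $\phi(A)$ into a region $R$: a ball, or the solid torus of parallelism. Its frontier is $\phi(A)$, and it meets the boundary only in $\partial H_{2n,C}$.

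Now $\phi(\Sigma_i)$ is connected and disjoint from $\phi(A)$, so it lies entirely inside or entirely outside $R$. Inside is impossible: every charm has strands crossing its left and right boundaries, so $\phi(\Sigma_i)$ is punctured by $L$. Hence $R$ lies in the component of $D^{2n}(C)\setminus\phi(\Sigma_i)$ containing $\phi(A)$. That component is the image of the interior of the half of $D^{2n+2}(C)$ containing $A$, so $\phi^{-1}(R)$ exhibits $A$ as compressible or $\partial$-parallel.

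In particular, your ``otherwise'' branch, where $V$ meets $\phi(\Sigma_i)$ or extends into the complementary pieces, never occurs. Your observation that $V$ misses $L$ is the right ingredient, but apply it to $\phi(\Sigma_i)$ rather than to whole pieces, and handle the compressible case by the same dichotomy.
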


\begin{proof}
Suppose $A$ is essential. Since $A$ does not intersect $\Sigma_i$, it lies in half of $D^{2n+2}(C)$ and hence, we can consider $\phi(A) \subset D^{2n}(C)$.  When $n \neq 1 $, $\phi(\Sigma_i)$ splits $D^{2n}(C)$ into two disjoint components. Note $\phi(A)$ lies in one such component. 

Suppose $\phi(A)$ compresses in $D^n(C)$. Let $D$ be the compression disk. 
Taking the union of $D$ with each of the sub-annuli that $\partial D$ cuts $\phi(A)$ into yields two properly embedded disks  in $D^n(C)$. By $\partial$-irreducibility and irreducibility of $D^n(C)$, they each cut a ball from $D^n(C)$. Taking the union of the two balls along $D$ yields a ball $B$ that is cut from $D^n(C)$ by $\phi(A)$. Since $A$ does not intersect $\Sigma_i$ in $D^{2n+2}(C)$, $\phi(A)$ does not intersect $\phi(\Sigma_i)$ in $D^n(C)$. So, either $\phi(\Sigma_i) \cap B = \emptyset$ or $\phi(\Sigma_i) \subset B$. In the first case, The pre-image of $D$ in $D^{2n+2}(C)$  yields a compression disk for $A$, a contradiction. In the second case, the existence of $B$ surrounding $\phi(\Sigma_i)$ prevents any strands of $C$ from intersecting $\phi(\Sigma_i)$ and hence, from intersecting $\Sigma_i$ in 
$D^{2n+2}(C)$, contradicting the rules for a charm.

Now, suppose $\phi(A)$ $\partial$-compresses in $D^n(C)$, with $\partial$-compression disk $D$. Then a regular neighborhood of $D \cup \phi(A)$ has a component that is a disk $D'$  with boundary in $\partial H_{n,C}$, with two parallel copies of $D$ as part of $D'$. Since $\D^n(C)$ is $\partial$-irreducible, $\partial D'$ bounds a disk $D''$ on $\partial H_{n,C}$. Then, $D' \cup D''$ is a sphere, which by irreducibility of $D^n(C)$, bounds a ball in $D^n(C)$. Then $\phi(A)$ is either inside the ball or outside the ball. 

If $\phi(A)$ is inside the ball then $\phi(A)$ is  compressible.
If $\phi(A)$ is outside the ball, then the two copies of $D$ that appear in $D'$ and are therefore on the boundary of the ball glue together to yield a solid torus in $D^n(C)$, through which $\phi(A)$ can be isotoped to $\partial H_{n,C}$. 
In particular, this means we can find a $\partial$-compressing disk $D'''$ for $\phi(A)$ that avoids $\phi(\Sigma_n)$. This disk lifts to a $\partial$-compressing disk for $A$ in $D^{2n+2}(C)$, a contradiction to $A$ being essential in $D^{2n+2}(C)$.
\end{proof}

\begin{lemma}
  Let $A$ be a Type I minimal complexity essential annulus in $D^{2n+2}(C)$. 
  Then, the intersection graph $G = \mathbf{S'} \cap A$ takes one of the two forms depicted in Figure \ref{fig:possibleAnnulusIntersections}.
  \label{lemma:annulus_graph_lemma}
\end{lemma}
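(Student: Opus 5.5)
We want to pin down the intersection graph $G=\mathbf{S'}\cap A$ on a minimal complexity essential Type I annulus $A$. The tool is the Graph Lemma (Lemma \ref{lemma:GraphLemma}), which forbids interior and exterior polygonal regions. Since we cannot see Figure \ref{fig:possibleAnnulusIntersections}, we expect the two forms to be: (a) $A\cap E=\emptyset$, so $G$ is a nonempty family of essential circles parallel to the core of $A$, each lying in a single sheet; and (b) $A\cap E\neq\emptyset$, so $G$ is made of spanning arcs running from $\partial A_0$ to $\partial A_1$, meeting only at vertices on $E$, with every complementary region a disk that meets both boundary circles.

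First, $G$ is nonempty. Otherwise $A$ misses every sheet, and Lemma \ref{annulusmissingsheet} shows $A$ is inessential. In fact Lemma \ref{annulusmissingsheet} gives more: $A$ must meet every sheet $\Sigma_i$.

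Next, simple closed curves of $G$ disjoint from $E$. Such a curve lies in the interior of a single sheet. If it is inessential on $A$, an innermost one bounds a disk region on $A$ that either contains no other curves or contains only part of $G$. An innermost component of $G$ then yields an interior polygonal region, contradicting the Graph Lemma. So every closed curve of $G$ avoiding the vertices is a core curve of $A$.

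Now take the case $A\cap E=\emptyset$. Then $G$ has no vertices. By the previous step it has no trivial circles. Every arc of $G$ would be an arc in a single wedge with both endpoints on $\partial A$. An inessential such arc cuts off an exterior polygonal region, which is forbidden. An essential arc runs from $\partial A_0$ to $\partial A_1$, but then, since $A\cap E=\emptyset$, $A$ meets no other sheet along that arc, and so $A$ has a disk complementary region meeting the boundary in two arcs. We must exclude this possibility or include it in form (b). So $G$ is a family of parallel core circles, which is form (a).

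Now take the case $A\cap E\neq\emptyset$. At each vertex, $2n+2$ edges meet, alternating among all $n+1$ sheets. Do an Euler characteristic count on $A$: $\chi(A)=0$, and the vertices have valence at least $2n+2\ge 4$ in the interior. From this we argue that if some component of $G$ did not separate $\partial A_0$ from $\partial A_1$, it would lie in a disk on $A$ or cut off a region meeting $\partial A$ in a single arc. An innermost or outermost face then gives an interior or exterior polygonal region, contradicting the Graph Lemma. Hence every face of $A\setminus G$ is either an annulus or a disk meeting $\partial A$ in at least two arcs. Counting forces the faces adjacent to vertices to be disks meeting both $\partial A_0$ and $\partial A_1$. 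This means the edges through vertices are essentially spanning arcs, giving form (b).

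The main obstacle is this last combinatorial step: ruling out vertex configurations in which a component of $G$ containing vertices has faces that are annular, or disks touching only one boundary circle in two arcs. We would handle it by choosing a component of $G$ that is extremal (closest to $\partial A_0$) and applying the Euler count $v-e+f$ to the subsurface it bounds. Each vertex contributes $n+1$ edges of excess, so we obtain a face with too few boundary contacts. That face is polygonal, and we then invoke the Graph Lemma, or the $\partial$-incompressibility of $\Sigma_i$ from Lemma \ref{lemma:splitting_bdy_incomp}, to isotope it away and reduce complexity.
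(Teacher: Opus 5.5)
Your guess for the second form in Figure \ref{fig:possibleAnnulusIntersections} is wrong, and this breaks both halves of your case split. The two forms are (a) disjoint essential circles and (b) \emph{disjoint} spanning arcs from $\partial A_0$ to $\partial A_1$. These are exactly the configurations treated in Lemmas \ref{lemma:simpleClosedCurveAnnulusInessential} and \ref{lemma:arcAnnulusInessential}. In both forms $A\cap E=\emptyset$ and $G$ has no vertices. The dichotomy is circles versus arcs: an essential circle separates $\partial A_0$ from $\partial A_1$, so it cannot coexist with a spanning arc.

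Consequently, your conclusion in the case $A\cap E=\emptyset$, ``So $G$ is a family of parallel core circles,'' is a non sequitur. Vertex-free spanning arcs cut $A$ into disks meeting $\partial A$ in two arcs. These are not polygonal regions, so the Graph Lemma cannot exclude them. They are precisely form (b), and the paper needs Lemmas \ref{lemma:annulus_double_back}--\ref{lemma:arcAnnulusInessential} to eliminate them later.

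In the case $A\cap E\neq\emptyset$ you are trying to establish a configuration that cannot exist. The missing observation, which the paper states in one line, is that \emph{any} vertex of $G$ forces an interior or exterior polygonal region. Your Euler count proves this if you carry it through:
\begin{itemize}
\item[(1)] Take $v$ vertices of valence $2n+2$, $b$ endpoints on $\partial A$ (valence $3$ in $G\cup\partial A$), and one valence-two vertex on each vertex-free circle of $G\cup\partial A$. Then $\chi(A)=0$ gives $\sum_F\chi(F)=nv+b/2$, summing over the faces of $A\setminus(G\cup\partial A)$.
\item[(2)] Each face has $\chi\le 1$, with equality only for disks. A face adjacent to an endpoint-free boundary circle is not a disk, and each of the $b$ boundary edges between endpoints lies on exactly one face.
\item[(3)] So if no face is polygonal, each disk face uses at least two boundary edges and $\sum_F\chi(F)\le b/2$. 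This forces $v=0$, since $n\ge 1$.
\end{itemize}
Closures of disk faces are automatically embedded, as the Graph Lemma requires. A face lies in one piece, while distinct sectors at a vertex and the two sides of an edge lie in distinct pieces.

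With $v=0$, $G$ is a disjoint union of circles and arcs. Innermost trivial circles give interior polygonal regions, and outermost arcs with both ends on one boundary circle give exterior polygonal regions. What remains is exactly forms (a) and (b). Your last paragraph reaches for this count, but you apply it only to prune some vertex configurations while keeping a vertex-bearing form (b), which the count itself rules out.
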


\begin{proof}

If $\mathbf{S'} \cap A = \emptyset$, then by Lemma \ref{lemma:splitting_annulus}, $A$ cannot be essential. Thus, $G$ must be a non-empty set of curves. There can be no arcs with  both endpoints on the same edge of $A$ since an outermost such cuts a disk off $A$ which contradicts the minimality of $A$ by Lemma \ref{lemma:GraphLemma}. Any interior vertices in $G$ induce an exterior/interior graph region, which also contradicts the minimality of $A$ by Lemma \ref{lemma:GraphLemma}. It then follows $\mathbf{S'} \cap A$ takes on one of the desired forms.
\end{proof}

\begin{figure}[htbp]
    \centering
    \includegraphics[width=0.8\linewidth]{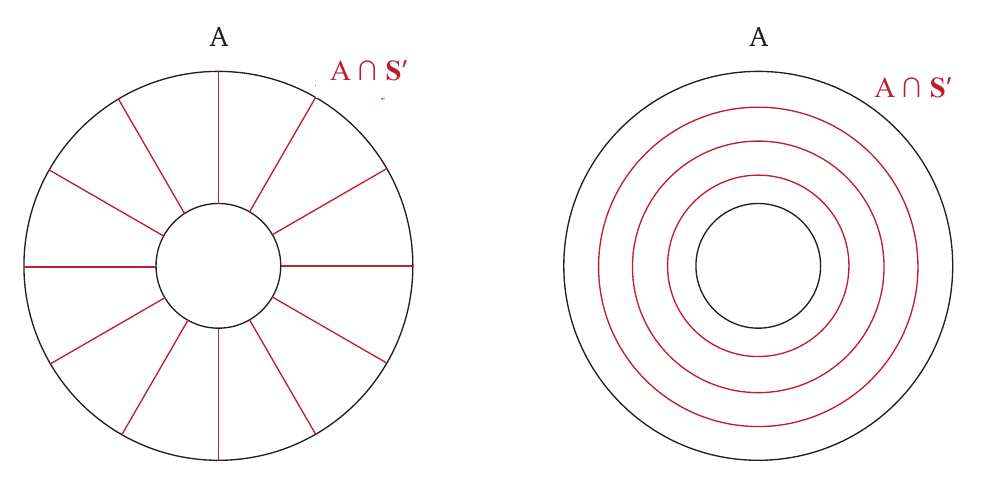}
    \caption{Possible intersections of a minimal Type I annulus with the sheets.}
    \label{fig:possibleAnnulusIntersections}
\end{figure}

\begin{lemma} A  Type I  minimal complexity essential annulus $A$ in $D^{2n+2}(C)$ cannot have $G$ form a series of concentric non-trivial simple closed curves on $A$.
\label{lemma:simpleClosedCurveAnnulusInessential}
\end{lemma}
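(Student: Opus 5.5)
Suppose $G=\mathbf{S'}\cap A$ is a nonempty family of concentric essential simple closed curves on $A$. Since $G$ is a graph with no vertices, $A\cap E=\emptyset$. The curves cut $A$ into sub-annuli $A_0, A_1, \dots, A_k$. The two outermost ones, $A_0$ and $A_k$, each have one boundary circle on $\partial M$ and the other on a wedge. Every inner $A_j$ is properly embedded in a single piece $P_{i_j}$ with both boundary circles on $\widehat{\Sigma}_{i_j}$. My plan is to show that all of $A$ can be pushed off some sheet, and then apply Lemma \ref{annulusmissingsheet}.

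First I would treat an inner sub-annulus $A_j\subset P_j$ whose two boundary circles lie on the same sheet. This is the situation already handled in the torus argument of Lemma \ref{lemma:EssentialTorus}. Reflect $A_j$ to get a torus $T'$ in $D^{2n}(C)$ (Lemma \ref{lemma:non_parallel_torus} setting). Since $D^{2n}(C)$ is hyperbolic, $T'$ is either compressible or parallel to a link component, because Lemma \ref{lemma:non_parallel_torus} rules out parallelism to $\partial M$.

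If $T'$ is compressible, I would rerun the construction from Lemma \ref{lemma:EssentialTorus}. Take a compression disk meeting $\Sigma_i$ in a single arc and form the disk $D''$ from $N(A_j\cup D')$; its boundary bounds a disk in the wedge. Irreducibility (Lemma \ref{Pirred}) then gives a ball containing $A_j$, through which $A_j$ can be isotoped off $\widehat{\Sigma}_j$. This removes two intersection curves and contradicts minimality. The same argument applies if the compression has boundary parallel to $\partial A_j$, since then $A$ itself would be compressible. If instead $T'$ is parallel to a link component $K$, then $A_j$ together with an annulus on the wedge cobounds a region in $P_j$ that is a neighborhood of an arc of $K$ meeting the wedge. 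I expect this case to be the main obstacle. A Type I annulus has both boundaries on $\partial M$, so I would try one of two things: push $A_j$ across the arc of $K$ and check that the complexity drops, or else meridionally compress to produce a twice-punctured sphere and use Lemma \ref{primelink} to conclude that the arc is trivial and can be isotoped away.

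It remains to treat the case where consecutive curves of $G$ lie on different sheets, and the ends $A_0, A_k$. Because the curves avoid $E$, each curve on a wedge $\widehat{\Sigma}_i$ is a closed curve in the wedge minus the central arcs. In a piece, a sub-annulus $A_j$ joining two different wedges must then run around a stake or link strands. My plan here is to follow the same "reflected copies" strategy as in the torus proof, relying on the essentiality of the sheets (Lemmas \ref{lemma:splitting_incomp}, \ref{lemma:splitting_bdy_incomp}). The outer piece $A_0$ is an annulus in a piece running from $\partial M$ to a wedge. By $\partial$-incompressibility of $(P_i,\widehat{\Sigma}_i)$ and Lemma \ref{boundarypieceinc}, $A_0$ should either be parallel into $\partial P_i$ or force a contradiction. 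If it is parallel, it can be pushed across the sheet, lowering complexity.

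Iterating these reductions removes all intersection curves, so $A$ meets no sheet. In particular it misses some $\Sigma_i$, and Lemma \ref{annulusmissingsheet} shows $A$ is not essential, a contradiction.
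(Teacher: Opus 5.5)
There is a genuine gap in exactly the case you flag, where the reflected torus $T'$ is parallel to a link component $K$. Neither proposed remedy works. Pushing $A_j$ across the arc of $K$ is not an isotopy of the link exterior, so it tells you nothing about complexity in the isotopy class of $A$. Meridionally compressing and invoking Lemma \ref{primelink} only shows that the arc of $K$ in $P_j$ is unknotted. Even then, $A_j$ is still a tube around that arc whose ends encircle punctures of $K$ on the wedge, so it cannot be isotoped off $\widehat{\Sigma}_j$ and no intersection curve disappears. Your treatment of the outer sub-annuli $A_0,A_k$ has the same problem. Nothing forces an annulus in a piece running from $\partial M$ to the wedge to be parallel into $\partial P_i$, and in this configuration it is not. (Separately, there is no need to remove every curve and then invoke Lemma \ref{annulusmissingsheet}. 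Any complexity drop already contradicts minimality, and that lemma is what guarantees $G$ has at least two curves in the first place.)

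The missing idea is to turn this configuration into a Type II annulus inside a single piece.

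\begin{itemize}
\item Work with the sub-annulus $A'$ between the curve $\gamma_1$ of $G$ nearest $\partial A_0$ and the next curve $\gamma_2$. Reflecting $A'$ across the wedges yields the torus $T'$ whether or not $\gamma_1,\gamma_2$ lie on the same sheet, so no separate ``different sheets'' case is needed.
\item Each curve bounds a disk in a component of $\widehat{\Sigma}_i\setminus E$, and that disk must be punctured by $L'$, or else $A$ compresses.
\item If $T'$ is parallel to $K$, then $\gamma_1$ encircles a single puncture. Glue the outer sub-annulus of $A$ (from $\partial A_0$ to $\gamma_1$) to the annulus in the wedge between $\gamma_1$ and a meridian of $K$. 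This gives an annulus $A''$ lying in one piece, with one boundary circle on $\partial M$ and the other a meridian of $K$.
\item Such an annulus is automatically essential. A compressing disk would give a sphere meeting $K$ exactly once. Its boundary circles lie on different boundary components, so it has no $\partial$-compressing disk.
\item Because $A''$ lies in a single piece, $\phi(A'')$ is such an annulus in $D^{2n}(C)$, contradicting hyperbolicity there.
\end{itemize}

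This is also why you must use the curve nearest $\partial A_0$ rather than an arbitrary inner $A_j$. Otherwise the annulus from $\partial A_0$ to the meridian crosses several sheets and cannot be transported to $D^{2n}(C)$.
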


\begin{proof}
Let $A$ be such an annulus. Note $D^{2n+2}(C)$ contains at least two sheets and by Lemma \ref{lemma:splitting_annulus}, there are at least two concentric circles in $G$. Take an outermost such circle $\gamma_1$ closest to boundary component $\partial A_0$ of $A$, and an adjacent non-boundary circle $\gamma_2$. Note $\gamma_1, \gamma_2$ lie on the same wedge $\widehat{\Sigma}_i$ and bound an annulus $A' \subset A$ that lies in the piece $P_i$ as we saw on the left side of  Figure \ref{fig:reflectingAnnuliToTori}. 

Again consider the torus $T'$ formed  in $D^{2n+2}(C)$ by reflecting $A'$ across the wedges as on the right side of Figure \ref{fig:reflectingAnnuliToTori}. 
Note that $\gamma_1$ and $\gamma_2$ must bound disks, possibly punctured by $L'$, on $\widehat{\Sigma}_i$, since they do not intersect $E$ and $\widehat{\Sigma}_i \setminus E$ is a pair of disks. If such a disk $D$ is unpunctured in the complement of $L'$, choosing the innermost one if the two curves $\gamma_1$ and $\gamma_2$ are concentric, this would give a compression disk for $A$, a contradiction. Hence, $D$ is punctured by $L'$. 

By Lemma \ref{lemma:EssentialTorus}, $T'$ is either compressible or $\partial$-parallel. 
If $T'$ is compressible, then  after compression along compression disk $D$, we obtain a sphere which must bound a ball. Hence, $T'$ bounds a solid torus or a knot exterior. In both cases, $\partial D$ is parallel on $T'$ to the components of $\partial A'$, a contradiction to $A$ being incompressible. Then one boundary component of $N(T' \cup D)$ is a sphere which must bound a ball to the $T'$ side since $\partial M$ is to the other side. Hence $T'$ must bound an unknotted solid torus in the link complement.  In the case $\partial A$ is a longitude of the torus, we can then push $A'$ to $\widehat{\Sigma}_i$ and through to lower the complexity of $A$, a contradiction. In the case $\partial A$ is a meridian of $T'$, then $A$ is compressible, a contradiction.

If $T'$ is $\partial$-parallel, then  by Lemma \ref{lemma:non_parallel_torus}, $T'$ is parallel to a link component $K$. Thus $\gamma_1$ must wrap around a single puncture on $\Sigma_i$, bounding an annulus on $\widehat{\Sigma}_i$ with one boundary component $\gamma_1$ and the other a meridian on $K$. Note $\partial A_0 \cup \gamma_1$ bound an annulus on $A$. Gluing along $\gamma_1$, yields an annulus $A''$ with one boundary circle $\partial A_0$ and one boundary circle a meridian on $K$. 
But, Type $II$ annuli with one boundary a knot meridian and the other on the manifold boundary are automatically essential in both $D^{2n}(C)$ and  $D^{2n+2}$. To see this, they cannot be $\partial$-compressible, and compression discs induce spheres punctured exactly once by a knot meridian, which  cannot occur. Therefore Type $II$ annuli of this kind existing in a piece is a contradiction to hyperbolicity of $D^{2n}(C)$.  
\end{proof}

\begin{lemma}A Type I minimal complexity essential  annulus with $G$ a series of disjoint non-trivial arcs cannot have two adjacent intersection arcs lie on the same $\Sigma_i$.
 \label{lemma:annulus_double_back}
\end{lemma}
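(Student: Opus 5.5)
Suppose $A$ is a Type I essential annulus of minimal complexity in its type, and $G=\mathbf{S'}\cap A$ consists of disjoint arcs, each running from $\partial A_0$ to $\partial A_1$. Suppose also that two adjacent arcs $\gamma_1,\gamma_2$ lie on the same sheet $\Sigma_i$. Between them they cut off a rectangle $R\subset A$ that contains no other intersection curves. So $R$ lies in a single piece $P_j$, its two sides $\gamma_1,\gamma_2$ lie on the wedge $\widehat{\Sigma}_j\subset\Sigma_i$, and its other two sides $\beta_0\subset\partial A_0$ and $\beta_1\subset\partial A_1$ lie on $\partial M\cap P_j$. The plan is to use the essential pair $(P_j,\widehat{\Sigma}_j)$ to push $R$ across $\Sigma_i$. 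This removes $\gamma_1$ and $\gamma_2$ and contradicts minimality.

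\textbf{Step 1: find a rectangle on the wedge.} Since the arcs of $G$ have no endpoints in the interior of $A$, each $\gamma_k$ meets $E$ only at its endpoints, if at all. Within the wedge, $\gamma_1$ and $\gamma_2$ are properly embedded arcs of $\widehat{\Sigma}_j$. The wedge $\widehat{\Sigma}_j$ is the half of the annulus $\Sigma_i$ on one side of $E$; it is a disk (two disks when $n=1$) punctured by $L'$. By $\partial$-incompressibility of $\widehat{\Sigma}_j$ in $P_j$, together with irreducibility (Lemma \ref{Pirred}) and incompressibility of $\partial P_j\setminus\widehat{\Sigma}_j$ (Lemma \ref{boundarypieceinc}), we want to show that $\gamma_1\cup\gamma_2$ cobounds with two arcs of $\partial\widehat{\Sigma}_j\cap\partial M$ an unpunctured rectangle $R'\subset\widehat{\Sigma}_j$.

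The natural route is to consider the disk $R\cup_{\gamma_1}$ obtained by banding $R$ along the wedge. Concretely, take $N(R\cup\widehat{\Sigma}_j)$-type pieces: the boundary of a regular neighborhood of $R$ in $P_j$ gives two disks parallel to $R$. Alternatively, cut $\widehat{\Sigma}_j$ along $\gamma_1$ and argue that $\gamma_2$ is separated from all punctures. If some puncture of $L'$ lay between $\gamma_1$ and $\gamma_2$ on the wedge, then $R$ together with the strip of the wedge between them would form a disk in $P_j$ (viewed in $D^{2n}(C)$) whose boundary lies on $\partial M$ and which separates that link strand. That contradicts $\partial$-irreducibility of $D^{2n}(C)$, via Lemma \ref{boundarypieceinc}.

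\textbf{Step 2: isotope.} The closed surface $R\cup R'\cup(\text{two strips of }\partial M)$ is a sphere in $P_j$. It bounds a ball by Lemma \ref{Pirred}, and the strips on $\partial M$ exist by incompressibility of $\partial P_j\setminus\widehat{\Sigma}_j$. Isotope $R$ through this ball to $R'$ and slightly past $\Sigma_i$. This eliminates both $\gamma_1$ and $\gamma_2$ and does not increase $|E\cap A|$, so the complexity of $A$ drops, a contradiction. One must also check that the endpoints of $\gamma_1,\gamma_2$ on $E$ are handled consistently, i.e.\ that pushing $R$ over does not create new intersections with $E$. Since $R$ meets $E$ only at corners and the ball lies in $P_j$, the count $|E\cap A|$ decreases or stays the same while the number of curves decreases by two.

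\textbf{Main obstacle.} The hard part is Step 1: showing that the region of $\widehat{\Sigma}_j$ between $\gamma_1$ and $\gamma_2$ is an unpunctured rectangle. There are two distinct configurations to handle. In the first, both arcs avoid $E$ and have endpoints on the same boundary circle of $\Sigma_i$. In the second, the endpoints lie on $E$ or on different boundary components of the wedge, in which case the two arcs might not be parallel in the wedge. In the second configuration, I would first try a $\partial$-compression argument applied to the disk $R$ viewed with boundary partly on $\widehat{\Sigma}_j$ and partly on $\partial M$. This should produce a $\partial$-compressing disk for the annulus $\Sigma_i$ in $D^{2n+2}(C)$, contradicting Lemma \ref{lemma:splitting_bdy_incomp}, unless the rectangle $R'$ exists.
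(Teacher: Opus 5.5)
Your Steps 1 and 2 carry the whole content of the lemma, and neither is established.

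In Step 1, gluing $R$ to the strip $Q$ of the sheet between $\gamma_1$ and $\gamma_2$ identifies them along \emph{both} arcs. So $R\cup Q$ is an annulus, not a disk, and $\partial$-irreducibility says nothing about it. It is even a M\"obius band if the sides of $Q$ on $\partial M$ join the $\partial A_0$-endpoint of one arc to the $\partial A_1$-endpoint of the other. Nor does the essential pair apply directly. $R$ meets $\widehat{\Sigma}_j$ in two arcs, so it is neither a compressing nor a $\partial$-compressing disk for the wedge. Your ``second configuration'' is left to a hoped-for argument.

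In Step 2, Lemma \ref{boundarypieceinc} only gives a disk on $\partial P_j\setminus\widehat{\Sigma}_j$ for a curve that already bounds a disk in $P_j$. The circles $\beta_0\cup\delta_0$ and $\beta_1\cup\delta_1$ bound no such disk unless you first show that $R\cup R'$ compresses. For instance, take $R=\rho\times I$, where $\rho$ is an arc in the charm disk with both ends on the side lying in $\Sigma_i$, cutting off a subdisk that contains a stake and misses the tangle. Then $R'$ is an unpunctured rectangle, but the ``strips'' are annuli around the stake and there is no ball. That configuration is ruled out separately: a vertical disk from the wedge to the stake cylinder $\partial$-compresses $\widehat{\Sigma}_j$. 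But you give no argument that covers all configurations.

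The missing ingredient is a reason why the surface built from $R$ must compress. The paper supplies it globally rather than inside $P_j$:
\begin{enumerate}
\item Reflect $R$ across $\Sigma_i$ to get an annulus $A'=R\cup R^R$ meeting only $\Sigma_i$.
\item By Lemma \ref{annulusmissingsheet}, which uses the hyperbolicity of $D^{2n}(C)$ via $\phi$, $A'$ is inessential.
\item A $\partial$-compression of $A'$ would give one for $A$, so $A'$ compresses.
\item By Lemma \ref{lemma:EssentialDisks}, the boundary circles of $A'$ bound disjoint or nested disks on $\partial M$.
\item This yields a sphere bounding a ball, through which $R$ is pushed across $\Sigma_i$, removing two arcs.
\end{enumerate}
Doubling also makes locating $R'$ unnecessary and never produces a M\"obius band.

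Two minor points. The wedge is the annulus formed by two half-sheets meeting along $E$, not a disk. Also, here $A\cap E=\emptyset$, since $G$ has no vertices and $\partial A$ avoids the endpoints of $E$, so arcs ending on $E$ do not occur.
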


\begin{proof} See Figure \ref{fig:annulusDoublingBack}.
Any two adjacent arcs cut a disk $D$ off of $A$. If the two arcs lie on some $\Sigma_i$, reflecting $D$ across $\Sigma_i$, reflects to an annulus $A'$. Since $A'$ intersects only $\Sigma_i$, by Lemma \ref{lemma:splitting_annulus} $A'$ is compressible or $\partial$-compressible. Any $\partial$-compression disk for $A'$ induces a $\partial$-compression disk for $A$, so $A'$ must be compressible.

Since $A'$ is compressible, by Lemma \ref{lemma:EssentialDisks}, $\partial A'_0$ and $\partial A'_1$ bound disks $D_0$ and $D_1$ which lie on $\partial M$. Since $\partial A'_0 \cap \partial A'_1 = \emptyset$, $D_0$ and $D_1$ are disjoint or concentric. If $D_0$ and $D_1$ are disjoint, $A' \cup D_0 \cup D_1$ forms a sphere which bounds a ball. We can isotope $D$ through the ball to eliminate two intersection curves on $A$, contradicting minimal complexity. If $D_0 \subset D_1$ then the boundary of a regular neighborhood of $A \cup D_1$ forms a sphere which bounds a ball, and again, we can isotope $D$ through the ball to eliminate two intersection curves on $A$, contradicting minimality.
\end{proof}

\begin{figure}[htbp]
    \centering
    \includegraphics[trim = {3cm 6cm 3cm 6cm}, scale=0.3]{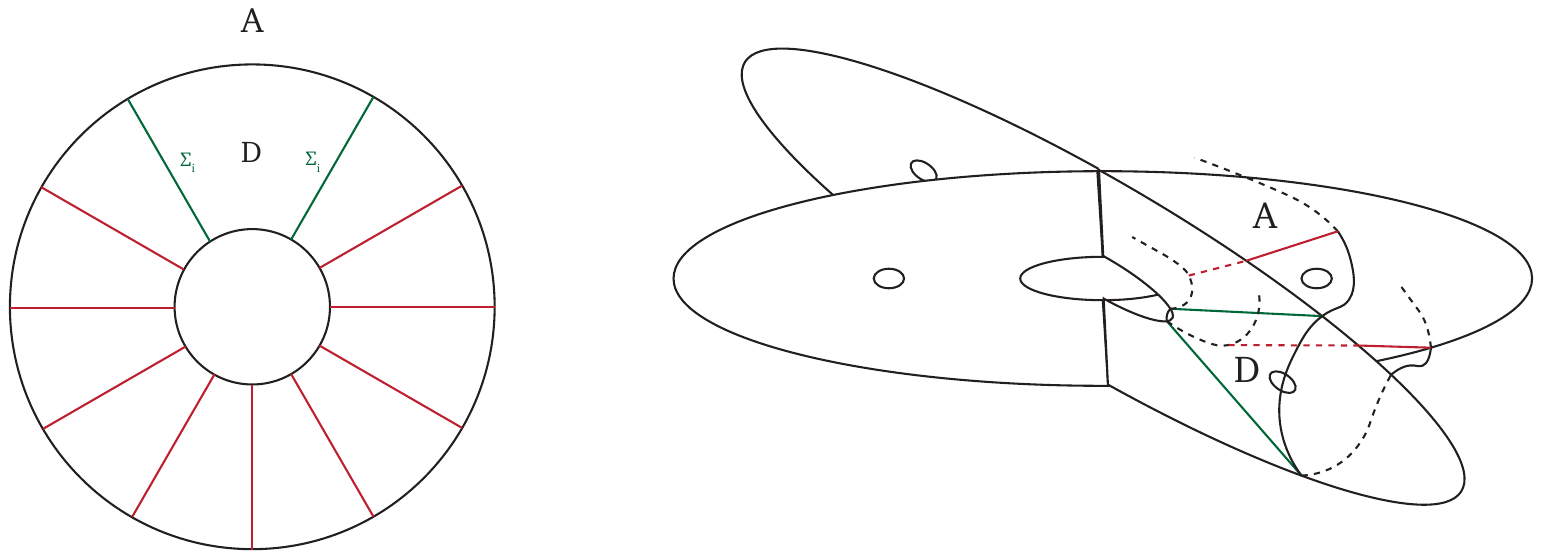}
    \caption{$D$ forms half of an annulus.}
    \label{fig:annulusDoublingBack}
\end{figure}

\begin{lemma} \label{lemma:arcAnnulusInessential}
There are no  Type I minimal complexity essential annuli with $G$ a series of disjoint non-trivial arcs.
 \end{lemma}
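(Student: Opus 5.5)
Suppose $A$ is a minimal-complexity Type I essential annulus for which $G=\mathbf{S'}\cap A$ is a family of disjoint arcs, each running from $\partial A_0$ to $\partial A_1$. These arcs cut $A$ into rectangles $R_1,\dots,R_k$ in cyclic order. Each $R_j$ lies in a single piece $P_{i_j}$. Its boundary consists of two arcs on $\partial M$ and two intersection arcs, and these lie on the two wedge faces of $P_{i_j}$, which are halves of two different sheets. By Lemma \ref{lemma:annulus_double_back}, consecutive arcs lie on different sheets. So as we go around $A$ each rectangle crosses a piece from one sheet to the adjacent sheet, and $A$ winds around the central axis $E$. The arcs avoid $E$ because $G$ has no vertices. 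Since the pieces around $E$ are arranged cyclically, the sequence of pieces visited is a closed walk in the cycle of $2n+2$ pieces, and each step moves to a neighbour. Lemma \ref{lemma:annulus_double_back} forbids immediate backtracking, so the walk goes monotonically around and $A$ meets every sheet. Thus $k=(2n+2)w$ for some winding number $w\ge 1$.

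The plan is then to follow the torus argument in Lemma \ref{lemma:EssentialTorus} and the corresponding argument in Section 5 of \cite{adams2021lower}. I will show that $A$ can be isotoped to be equivariant, meaning it is made of reflected copies of a single rectangle $R\subset P_i$. Each rectangle $R_j$, read in $P_{i_j}\cong P_i$, is a disk whose boundary runs from one wedge half to the other, with two arcs on $\partial M$. I will compare the rectangles by a cut-and-paste exchange. If two rectangles, viewed in the same model piece, are not isotopic rel wedges, then replacing all of them by copies of the one of least ``complexity'' gives another essential annulus of no greater complexity. Any resulting intersections can be removed using the incompressibility and $\partial$-incompressibility of the wedges (Lemmas \ref{lemma:splitting_incomp}, \ref{lemma:splitting_bdy_incomp}), irreducibility of $P_i$ (Lemma \ref{Pirred}), and Lemma \ref{boundarypieceinc}.

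Once $A$ is equivariant, I apply $\phi$ and glue $2n$ reflected copies of $R$ around $E$ to form an annulus $A^*$ in $D^{2n}(C)$. Hyperbolicity of $D^{2n}(C)$ makes $A^*$ compressible or $\partial$-compressible, so $A^*$ is $\partial$-parallel. Compressibility would force, via Lemma \ref{lemma:EssentialDisks}-type reasoning, a disk bounded on $\partial M$ encircling $E$. That disk would contain whole stakes or link strands, and this contradicts the requirement that strands exit both sides of the charm. A $\partial$-compression disk for $A^*$ I will make equivariant by choosing it minimal and applying the Graph Lemma argument. It then meets each piece in a disk with one arc on $R$, one arc on a wedge and one arc on $\partial M$. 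Reflecting this sub-disk around in $D^{2n+2}(C)$, exactly as with the compression disk in the torus case, produces a $\partial$-compressing disk for $A$, which contradicts essentiality. The case $w\ge 2$ is handled the same way, since $A^*$ would be a $w$-fold winding annulus, or it is ruled out because $A$ is embedded.

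I expect the main obstacle to be the equivariance step: showing that the rectangles of a minimal annulus can be taken to be reflected copies of one another. The cut-and-paste must preserve embeddedness and essentiality, and the region exchanged could a priori contain parts of $L'$ or stakes. I would first try the argument from \cite{adams2021lower}. Take the innermost pair of non-isotopic rectangles, let them cobound a product region in the piece, and use irreducibility to swap them without increasing complexity.
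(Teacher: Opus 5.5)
The skeleton of your proposal matches the paper's. You use Lemma \ref{lemma:annulus_double_back} to get cyclic order, make $A$ equivariant, then push it down by $\phi$ to $D^{2n}(C)$. The gap is the equivariance step, which is the real content of the lemma, and the exchange you sketch does not work as stated.

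\textbf{Where the exchange fails.} Every such arrangement has complexity $(0,k)$, so ``the rectangle of least complexity'' gets no leverage from minimality. An annulus built by replacing all rectangles with copies of one is not known to be isotopic to $A$ or essential. The claim that an innermost non-isotopic pair ``cobounds a product region'' is exactly what must be proved. The essential-pair hypothesis alone cannot give it, since it only controls disks meeting $\widehat\Sigma_i$ in one arc, and your rectangles meet the wedge in two.

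\textbf{The missing idea.} Take adjacent rectangles $D_1\subset P_1$ and $D_2\subset P_2$ sharing the arc $\gamma\subset\Sigma_2$. Reflect $D_2$ across $\Sigma_2$, so that $D_1$ and $D_2^R$ share $\gamma$ and both have their other arc on $\Sigma_1$.
\begin{enumerate}
\item Remove all intersections of $D_1$ and $D_2^R$ by isotopies keeping these arcs on $\Sigma_1,\Sigma_2$, so that they extend to $A$. This is a case analysis, and several cases are not covered by the tools you list. Some configurations directly give a $\partial$-compressing disk for $A$. One gives an annulus meeting only $\Sigma_2$, killed by Lemma \ref{annulusmissingsheet}. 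Another gives a M\"obius band whose frontier annulus must compress, producing an embedded projective plane.
\item Once $D_1\cap D_2^R=\gamma$, push $D_1\cup D_2^R$ off $\Sigma_2$ and double it across $\Sigma_1$. This gives a Type I annulus in $P_1\cup P_1^R$ disjoint from $\Sigma_2$, hence inessential by Lemma \ref{annulusmissingsheet}.
\item It is not $\partial$-parallel, since then $A$ would be. So it compresses and cuts off a ball, and that ball supplies the extendable isotopy of $D_1$ to $D_2^R$.
\end{enumerate}

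\textbf{The final step also needs repair.} ``Compressible or $\partial$-compressible, so $\partial$-parallel'' is a non sequitur.

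In the compressible case, the contradiction you name does not exist. A disk on $\partial M$ never contains link strands. A stake-free disk around an endpoint of $E$ is perfectly consistent with the charm axioms. The argument is instead as follows. Irreducibility and $\partial$-irreducibility of $D^{2n}(C)$ show that the compressed annulus $A^*$ cuts off a ball. This ball meets $E$ in one interval and meets each piece in a sub-ball. Reflecting these sub-balls around shows that $A$ cuts off a ball in $D^{2n+2}(C)$, so $A$ is compressible.

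In the $\partial$-compressible case, do not assemble the disk from pieces lying in every piece. Slide its arc on $A^*$ into a single rectangle. Then isotope the disk into a single piece by the argument of Lemma \ref{lemma:splitting_bdy_incomp}, and pull it back by $\phi^{-1}$ to get a $\partial$-compressing disk for $A$.
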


\begin{remark} We prove this lemma using a series of other lemmas. Note that by Lemma \ref{lemma:annulus_double_back}, such an annulus intersects the sheets in cyclic order. We wish to show that $A$ is isotopic to an annulus of the form $D \cup D^R \cup ... \cup D \cup D^R$, where each $D$ lies in a distinct piece. Note that the arcs of intersection cut disks off $A$. Consider a pair of adjacent disks that are, without loss of generality,  $D_1\subseteq P_1$ and $D_2\subseteq P_2$, intersecting in an arc $\gamma$ and with arcs of intersection $\alpha_1 \subset \Sigma_1$ and $\alpha_2 \subset \Sigma_3$. Our  strategy is to reflect $D_2$ into the piece containing $D_1$, then show that $D_1$ can be isotoped to $D_2^R$ via an isotopy that can be extended to an isotopy of $A$. 
    
We formalize this with the notion of an \textit{extendable isotopy}, which is an isotopy of $D_1 \cup D_2^{R}$ that keeps $\gamma$ properly embedded in $\Sigma_2$ and keeps $\alpha_1$ and $\alpha_2^{R}$ properly embedded in $\Sigma_1$. 
We begin by classifying the possible intersections of $D_1$ and $D_2^R$.
\end{remark}

\begin{lemma}
    The intersections of $D_1 \cap D_2^R$ are either simple closed curves or arcs that fall into one of the three cases in Figure \ref{fig:selfIntersectionCases}.
\end{lemma}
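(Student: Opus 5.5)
We place $D_1$ and $D_2^R$ in the same piece $P_1$ and put them in general position, transverse to each other and to the wedge. Then $D_1 \cap D_2^R$ is a compact $1$-manifold properly embedded in both disks. Its components are therefore simple closed curves in the interiors of the disks, or arcs whose endpoints lie on $\partial D_1 \cap \partial D_2^R$. The task is to constrain where these arc endpoints can lie.

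The boundary of $D_1$ consists of four pieces, in cyclic order: the arc $\alpha_1 \subset \Sigma_1$, a subarc of $\partial A_0$ on $\partial M$, the arc $\gamma \subset \Sigma_2$, and a subarc of $\partial A_1$ on $\partial M$. After reflection across $\Sigma_2$, the boundary of $D_2^R$ has the same structure: the arc $\gamma$, which it shares with $D_1$, the arc $\alpha_2^R \subset \Sigma_1$, and two arcs on $\partial M$.

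An endpoint of an intersection arc is a point where $\partial D_1$ meets $\partial D_2^R$ transversally inside a common boundary surface. First, $\gamma$ is common to both disks. We perturb $D_2^R$ slightly off $\gamma$ into $P_1$, or equivalently treat $\gamma$ as a common boundary arc, so that no intersection arcs end in the interior of $\gamma$ except as they emanate from $\gamma$ itself. Second, an endpoint on $\Sigma_1$ is a transverse crossing of $\alpha_1$ with $\alpha_2^R$ inside the wedge $\widehat{\Sigma}_1$. Third, an endpoint on $\partial M$ is a crossing of a $\partial A_0$ or $\partial A_1$ subarc of $D_1$ with a boundary subarc of $D_2^R$ on $\partial M$.

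Since $A$ is embedded, $\partial A_0$ and $\partial A_1$ are disjoint, and so are their reflected copies. However, a boundary curve of $A$ may still cross the reflected copy of a boundary curve. So I would rely on the embeddedness of $A$ and on the fact that $\partial A$ meets $\mathbf{S'}$ only at the endpoints of the arcs of $G$ to rule out the mixed configurations. The intended classification of arcs is then:
\begin{enumerate}
\item arcs with both endpoints on $\Sigma_1$;
\item arcs with both endpoints on $\partial M$;
\item arcs with one endpoint on $\Sigma_1$ and one on $\partial M$.
\end{enumerate}
I would match these three types to the cases of Figure \ref{fig:selfIntersectionCases}. Arcs ending on $\gamma$ would be ruled out by the general-position perturbation. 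If the figure instead treats them as a separate case, I would fold them into the classification in the same way.

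The main obstacle is this boundary bookkeeping: checking which pairings of endpoint locations are actually forced or excluded. Purely topological transversality only says that the arcs are properly embedded, so the argument has to show that each of the two boundary arcs of each disk contributes endpoints consistently. I would handle this by labeling every endpoint according to the surface it lies on, $\Sigma_1$ or $\partial M$, and observing that each intersection arc is properly embedded in $D_1$, a disk with a four-part boundary. Listing the possible unordered pairs of labels then gives the finite set of cases. Circles need no further analysis at this stage, since they will be removed later using irreducibility (Lemma \ref{Pirred}) and incompressibility of the pieces.
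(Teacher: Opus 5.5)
\textbf{There is a genuine gap in how you treat the shared arc $\gamma$.} $D_1$ and $D_2$ are adjacent disks of $A$ glued along $\gamma\subset\Sigma_2$, and the reflection fixes $\Sigma_2$ pointwise. So $D_1$ and $D_2^R$ necessarily share $\gamma$, and $D_1\cap D_2^R$ is not a properly embedded $1$-manifold. Both disks leave $\gamma$ into $P_1$. At isolated points of $\gamma$ where their inward directions coincide, the disks cross, so intersection arcs emanate from the interior of $\gamma$. These are stable features, not artifacts of bad position.

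You may not remove them by pushing $\partial D_2^R$ off $\gamma$. Every isotopy used afterward must be extendable, i.e.\ it must keep $\gamma$ as the common arc in $\Sigma_2$ so that it drags the rest of $A$ along. Separating the two disks at $\gamma$ destroys exactly that. These arcs form the paper's Case 1, which rests on the observation that an endpoint on $\gamma$ in $D_1$ is also an endpoint on $\gamma$ in $D_2^R$. Some of them carry real content later: Case 1(i) needs $\partial$-incompressibility of $\Sigma_1$, and Case 1(iv) needs $\partial$-incompressibility of $A$. Your three endpoint types never mention $\gamma$, so they omit this whole class.

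\textbf{Your plan to use embeddedness of $A$ to rule out ``mixed'' boundary configurations also fails, and labeling endpoints only as $\Sigma_1$ versus $\partial M$ is too coarse.} Embeddedness constrains $\partial D_1$ against $\partial D_2$, not against $\partial D_2^R$, which is not part of $A$. The $\partial A_0$-subarc of $D_1$ can cross the reflected $\partial A_1$-subarc of $D_2^R$. For example, an arc running from the $\partial A_0$-side to the $\partial A_1$-side of $D_1$ may have its endpoints on the reflected $\partial A_1$- and $\partial A_0$-sides of $D_2^R$, respectively. Gluing the regions between $\gamma$ and this arc then gives a M\"obius band (Case 3(iv)) instead of an annulus (Case 3(iii)). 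Such configurations must be listed, not excluded; the paper disposes of them only later, by a separate non-orientability argument. So for each endpoint you must record, in each disk separately, whether it lies on $\gamma$, on the $\Sigma_1$-arc, or on which of the two $\partial M$-arcs.

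What you do have right is the $\Sigma_1$ matching. Since the $\partial M$-arcs meet $\Sigma_1$ only at their endpoints, an endpoint on $\alpha_1$ in $D_1$ lies on $\alpha_2^R$ in $D_2^R$. That is the paper's observation for Case 2. Together with the analogous matching for $\gamma$ and a residual Case 3 (endpoints only on $\partial M$), it is essentially the paper's argument.
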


\begin{figure}[htbp]
    \centering
    \includegraphics[width=1.0\linewidth]{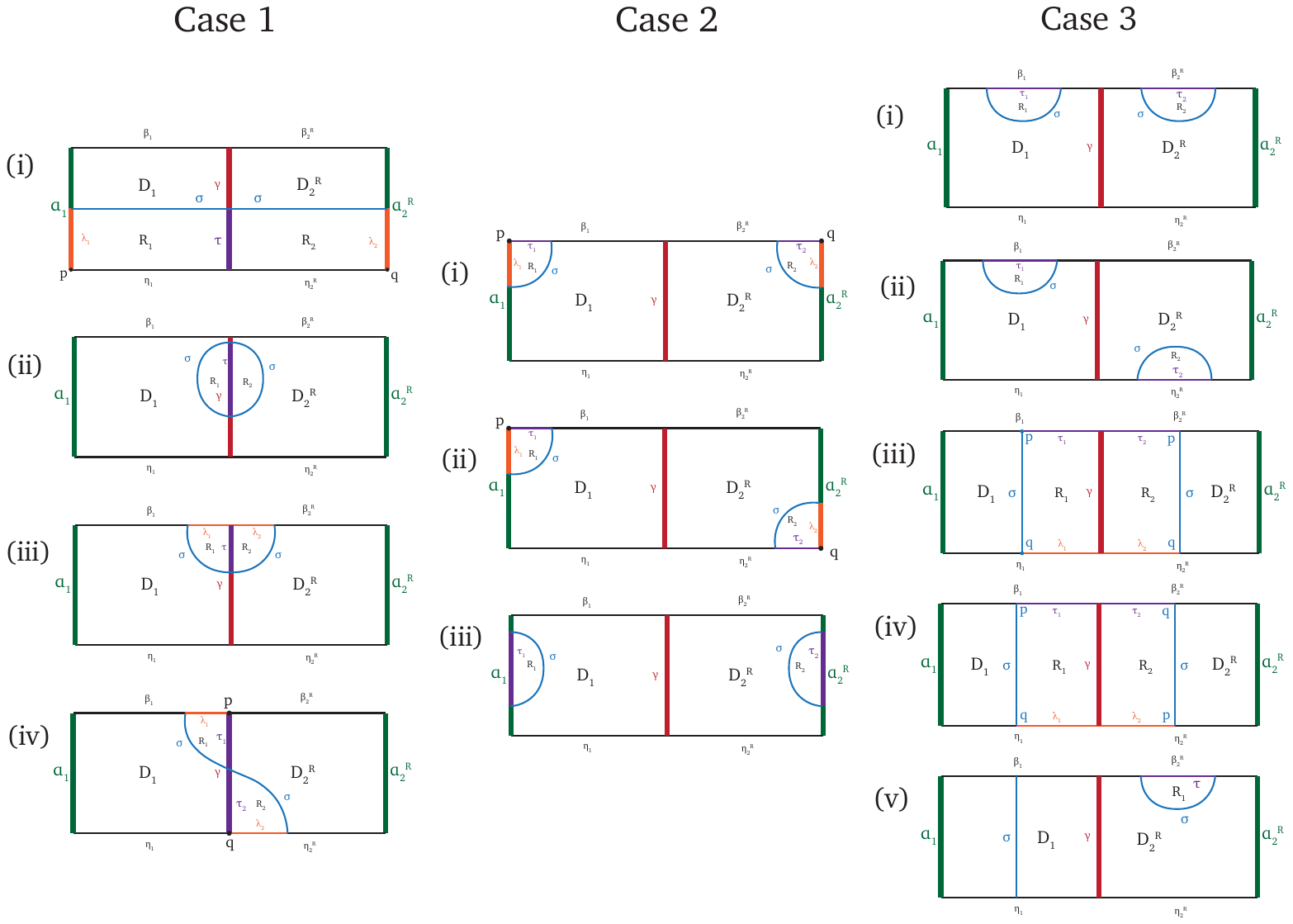}
    \caption{Possible intersections of $D_1$ and $D_2^{R}$.}
    \label{fig:selfIntersectionCases}
\end{figure}

\begin{proof}
We use the labels assigned in Figure \ref{fig:selfIntersectionCases}. We label arc endpoints when relevant. We classify intersection arcs into three cases as follows.

\begin{enumerate}
\item{ Arcs that have at least one endpoint on $\gamma$.} 
\item{ Arcs with no endpoints on $\gamma$ and that have at least one endpoint on $\alpha_1$.}
\item{ Arcs with no endpoints on $\gamma$ or $\alpha_1$.}
\end{enumerate}

Note $\gamma \subset (D_1 \cap D_2^{R})$. Thus if a curve has an endpoint on $\gamma$ in $D_1$, it must also have an endpoint on $\gamma$ in $D_2^{R}$. Note $\alpha_1$ and $\alpha_2^R$ both lie on $\Sigma_1$ while the $\beta_i$ and the $\eta_i$ intersect $\Sigma_1$ only on their endpoints. This implies any arc which intersects $\alpha_1$ on $D_1$ must intersect $\alpha_2^{R}$ on $D_2^{R}$. Thus, the classification for Case 1 is complete. The previous observation shows the classification for Case 2 is complete. Case 3 contains all remaining cases up to symmetry and is trivially complete.
\end{proof}

\begin{remark}
Any simple closed curve intersections on $D_1$, $D_2^{R}$ can be isotoped away by irreducibility of $D^{2n+2}(C)$. Starting with an innermost such intersection curve on $D_1$, we use the ball bounded by the corresponding disk $D_1'$ on $D_1$ and $D_2'$ on $D_2^R$ to isotope $D_1'$ to $D_2'$. Even if there are additional simple closed curves of intersection in $D_2'$, we can push the additional parts of $D_1$ out of the way during the isotopy.  Thus, we only consider the remaining non-trivial intersections.
\end{remark}

\begin{lemma}
Any intersection curve from Cases 1, 2 and 3 can be removed via an extendable isotopy of $D_1$ and $D_2^{R}$.
\label{lemma:case_2_intersection}
\end{lemma}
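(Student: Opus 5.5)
The plan is to remove intersection arcs one at a time by an outermost-disk argument inside the piece $P_1$. Each removal is carried out by an isotopy of $D_1$ supported in a ball in $P_1$. The ball is built from Lemma \ref{Pirred} together with the essentiality of $(P_1,\widehat{\Sigma}_1)$, using Lemma \ref{boundarypieceinc} whenever part of a boundary lies on $\partial M$. After the simple closed curves have been removed (preceding remark), $D_1 \cap D_2^R$ consists of properly embedded arcs. I choose an arc $\delta$ that is outermost on $D_2^R$ among the arcs of the case under treatment. Then $\delta$ cuts from $D_2^R$ a subdisk $E_2$ whose boundary is $\delta$ together with an arc $\epsilon_2$ of $\partial D_2^R$. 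The arc $\epsilon_2$ lies on $\gamma \subset \Sigma_2$, on $\alpha_2^R \subset \Sigma_1$, or on the $\beta/\eta$ arcs in $\partial M$, and it may pass from one of these to the next through points of $E$. Similarly $\delta$ cuts a subdisk $E_1$ from $D_1$, bounded by $\delta$ and an arc $\epsilon_1 \subset \partial D_1$. The goal in every case is to show that $E_1 \cup E_2$ cobounds a ball in $P_1$ with a disk lying on $\partial P_1$.

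\textbf{Case 1.} Here $\delta$ has endpoints on $\gamma$ in both disks. If both endpoints lie on $\gamma$, then $E_1 \cup E_2$ is a disk in $P_1$ with boundary on $\widehat{\Sigma}_2$. By incompressibility of $(P_1,\widehat{\Sigma}_1)$, its boundary bounds a disk in the wedge, and Lemma \ref{Pirred} gives a ball. I then push $E_1$ across this ball onto $E_2$ and slightly beyond. This moves $\epsilon_1$ only within $\Sigma_2$, so $\gamma$ stays properly embedded in $\Sigma_2$ and the isotopy is extendable. If one endpoint of $\delta$ lies on $\gamma$ and the other on $\alpha_1$ or on $\beta_i$, then the boundary of $E_1\cup E_2$ runs along the wedge and meets $E$ in the points where $\gamma$ meets $\alpha_1$. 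In that situation I use $\partial$-incompressibility of the pair, Lemma \ref{boundarypieceinc}, and Lemma \ref{Pirred}, exactly as in the exterior-polygonal part of the Graph Lemma (Lemma \ref{lemma:GraphLemma}). The isotopy slides the endpoints of $\gamma$ along $E$ but keeps them on $E$, so it remains extendable.

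\textbf{Case 2.} Here $\delta$ runs from $\alpha_1$ (equivalently $\alpha_2^R$) to $\alpha_1$ or to $\partial M$. The disk $E_1 \cup E_2$ has boundary on $\widehat{\Sigma}_1$, or consists of one arc on $\widehat{\Sigma}_1$ and one arc on $\partial M$. The same incompressibility, $\partial$-incompressibility, Lemma \ref{boundarypieceinc}, and Lemma \ref{Pirred} argument produces a ball. The isotopy moves $\alpha_1$ only within $\Sigma_1$.

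\textbf{Case 3.} Here $\delta$ has both endpoints on $\partial M$. The boundary of $E_1 \cup E_2$ lies in $\partial P_1 \setminus \widehat{\Sigma}_1$, so Lemma \ref{boundarypieceinc} gives a disk on $\partial M$ bounded by it, and Lemma \ref{Pirred} gives a ball. In this case the isotopy fixes $\gamma$ and $\alpha_1$.

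Each isotopy strictly reduces the number of arcs of $D_1\cap D_2^R$. Any arc that $D_1$ meets in the interior of the ball is removed by the same push, as in the remark on closed curves. Induction on the number of arcs therefore finishes the proof.

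The main obstacle is Case 1 and the mixed subcases where $\epsilon_1$ or $\epsilon_2$ passes through $E$. There I must check that the disk produced by $\partial$-incompressibility on the wedge does not contain punctures by $L'$. I must also check that the isotopy keeps $\gamma$ inside $\Sigma_2$ and does not drag it across $E$ into another sheet. My first attempt would be to take $\delta$ outermost on $D_1$ instead and to use the minimal complexity of $A$ to rule out punctured disks: a punctured disk would force $A$ to be compressible or would contradict Lemma \ref{lemma:annulus_double_back}.
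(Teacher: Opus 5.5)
There is a genuine gap. Your treatment of Case 2, and of Case 3 arcs that cut off a disk from both $D_1$ and $D_2^R$, matches the paper. But your uniform claim that $E_1\cup E_2$ is a disk cobounding a ball with a disk on $\partial P_1$ ignores that $D_1$ and $D_2^R$ share their entire boundary arc $\gamma\subset\Sigma_2$.

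\textbf{Case 3.} Here the claim fails outright when $\delta$ runs from the arc of $\partial D_1$ on $\partial A_0$ to the arc on $\partial A_1$. Such a ``vertical'' arc separates $\gamma$ from $\alpha_1$ on $D_1$, so no choice of side, outermost or not, gives $\partial E_1\subset\partial M$. Your assertion that $\partial(E_1\cup E_2)$ lies in $\partial P_1\setminus\widehat{\Sigma}_1$ is then false, and Lemma~\ref{boundarypieceinc} does not apply. The paper splits this into three situations:
\begin{itemize}
\item If $\delta$ is vertical on both disks, the sides containing $\gamma$ glue along $\gamma$ \emph{and} $\delta$ to an annulus $A'$ with boundary on $\partial M$. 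Since $A'$ misses a sheet, Lemma~\ref{lemma:splitting_annulus} makes it inessential. A $\partial$-compression of $A'$ would give one of $A$, so $A'$ compresses, and $\partial$-irreducibility then produces the ball.
\item The same gluing can instead give a M\"obius band. This is excluded because capping it with a disk on $\partial M$ would give a projective plane in a submanifold of $\mathbb{R}^3$.
\item If $\delta$ is vertical on one disk and cuts off a disk on the other, you get a $\partial$-compressing disk for $A$, so this case cannot occur.
\end{itemize}
None of this follows from the piece and wedge lemmas you cite. It needs the essentiality of $A$ itself and Lemma~\ref{lemma:splitting_annulus} applied to an auxiliary annulus, and it has to handle the nonorientable gluing.

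\textbf{Case 1.} This case has the same defect: $E_1$ and $E_2$ share a subarc of $\gamma$ and must be glued along it as well as along $\delta$.
\begin{itemize}
\item When both endpoints of $\delta$ lie on $\gamma$, the union is already a sphere, not a disk with boundary on the wedge.
\item When $\delta$ runs from $\gamma$ to $\partial M$ in the same direction along $\gamma$ on both disks, the union is a disk with boundary on $\partial M$, not on the wedge.
\item When $E_1$ and $E_2$ run along $\gamma$ in opposite directions, the union contains all of $\gamma$, an essential arc of $A$. The paper excludes this by $\partial$-incompressibility of $A$, not by an isotopy.
\end{itemize}

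\textbf{Your stated obstacle.} The difficulty you flag is not the real one. Here $G$ consists of disjoint arcs with no vertices, so $A$, and hence $D_1$ and $D_2^R$, is disjoint from $E$. Thus $\gamma$ never meets $\alpha_1$, and nothing slides along $E$. Also, the disks supplied by incompressibility and $\partial$-incompressibility of $(P_1,\widehat{\Sigma}_1)$ already lie in the link complement, so punctures cannot arise. Appealing to minimal complexity or Lemma~\ref{lemma:annulus_double_back} does not address the shared arc $\gamma$, which is the actual source of the difficulty.
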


\begin{proof}
We use similar arguments to eliminate arcs of intersection. First, we eliminate Case 3(v). We can glue the disk $R_2$, cut off $D_2^R$ by $\alpha$ and chosen so it does not contain any other intersection curves, to the disk $R_1$ cut off from $D_1$ by $\alpha$, gluing them along $\alpha$ to obtain a $\partial$-compression disk for $A$, a contradiction. The same argument works if the vertical arc is on $D_2^R$ and the semicircular arc on $D_1$. So this case cannot occur.

Now, consider arcs of type Case 1(ii). Taking an innermost such, we have disks $R_1$ on $D_1$ and $R_2$ on $D_2^R$. Their union is a sphere in $D^{2n+2}(C)$ which therefore bounds a ball by irreducibility. Using this ball, we can isotope $R_1$ to $R_2$ and eliminate the intersection. 

In  Cases 3(i), (ii), we choose such an intersection that bounds a disk $R_1$ on $D_1$ that contains no other intersection curves, and the corresponding $R_2$ on $D_2^R$. Then, $R_1 \cup R_2$ is a properly embedded disk in $P_i$ with boundary in $\partial P_i \setminus \widehat{\Sigma}_i$.

Because $\D^{2n+2}(C)$ is $\partial$-irreducible,   $\partial R_1 \cup R_2$ must therefore bound a disk in $\partial H_{2n+2,C}$, call it $R_3.$ Then, $R_1 \cup R_2 \cup R_3$ is a sphere bounding a ball, which allows us to isotope $R_1$ to $R_2$ and eliminate the intersection. Note that in the process, we can push any parts of $D_1$ that are in the ball out of the way and eliminate additional intersection arcs. 

Once we have eliminated Cases 3(i) and 3(ii), we can use the same argument to eliminate Case 1(iii).

For Case 1(iv), taking $R_1$ to be innermost,  we have $R_1 \cup R_2$  a disk with one boundary arc running across $A$ from one boundary to the other and the other in 
$\partial P_i$. By $\partial$-incompressibility of $A$, this is a contradiction.

For Case 2(iii), again choosing $R_1$ with no other curves of intersection, $R_1 \cup R_2$ is a disk with boundary in $\widehat{\Sigma}_i$, which is incompressible. So, there is a disk $R_3$ in $\widehat{\Sigma}_i$ with the same boundary and $R_1 \cup R_2 \cup R_3$ is again a sphere bounding a ball, which allows us to isotope $R_1$ to $R_2$ and eliminate the intersection.

For Case 2(i), and  (ii), we again take $R_1$ innermost such on $D_1$ and $R_2$ the corresponding disk on $D_2^R$. Their union is a disk with boundary in $\partial P_i$, one arc of which is in $\widehat{\Sigma}_i$ and one in $\partial P_i \setminus \widehat{\Sigma}_i$. Because $(P_i, \widehat{\Sigma}_i)$ is an essential pair, $\widehat{\Sigma}_i$ is $\partial$-incompressible and therefore there is a disk $R_3$ in $\partial P_i$ with the same boundary. Hence, $R_1 \cup R_2 \cup R_3$ is a sphere that bounds a ball, allowing us to isotope $R_1$ to $R_2$ and eliminate the intersection.

In Case $1(i)$, choosing $R_1$ outermost, gluing sub-disk $R_1$ to $R_2$ along the arc $\sigma \cup \tau$ gives a disk $D$. Then, $\partial D$ is composed of two arcs, the first being $\lambda_1 \cup \lambda_2 \subset \Sigma_1$ and the second being $\eta_1 \cup \eta_2^R \subset \partial M$. Since $\Sigma_1$ is $\partial$-incompressible, there is an arc $\rho \subset \partial \Sigma_1$ from $p$ to $q$ co-bounding a disk $R_3$ with $\lambda_1 \cup \lambda_2$. Gluing $R_1 \cup R_2$ to $R_3$ along $\lambda_1 \cup \lambda_2$, yields a disk with boundary $\rho \cup \eta_1 \cup \eta_2 \subset \partial M$. By Lemma \ref{lemma:EssentialDisks}, there is some $R_4\subset \partial M$ with the same boundary. $R_1 \cup R_2 \cup R_3 \cup R_4$ is a sphere which bounds a ball, and we may isotope $R_1$ to $R_2$ while keeping $\alpha_1$, $\alpha_2^R$ on $R_3$, giving an extendable isotopy which eliminates the intersection as shown in Figure \ref{fig:case1iIntersectionNew}.
    
\begin{figure}[htbp]
        \centering
        \includegraphics[width=0.4\linewidth]{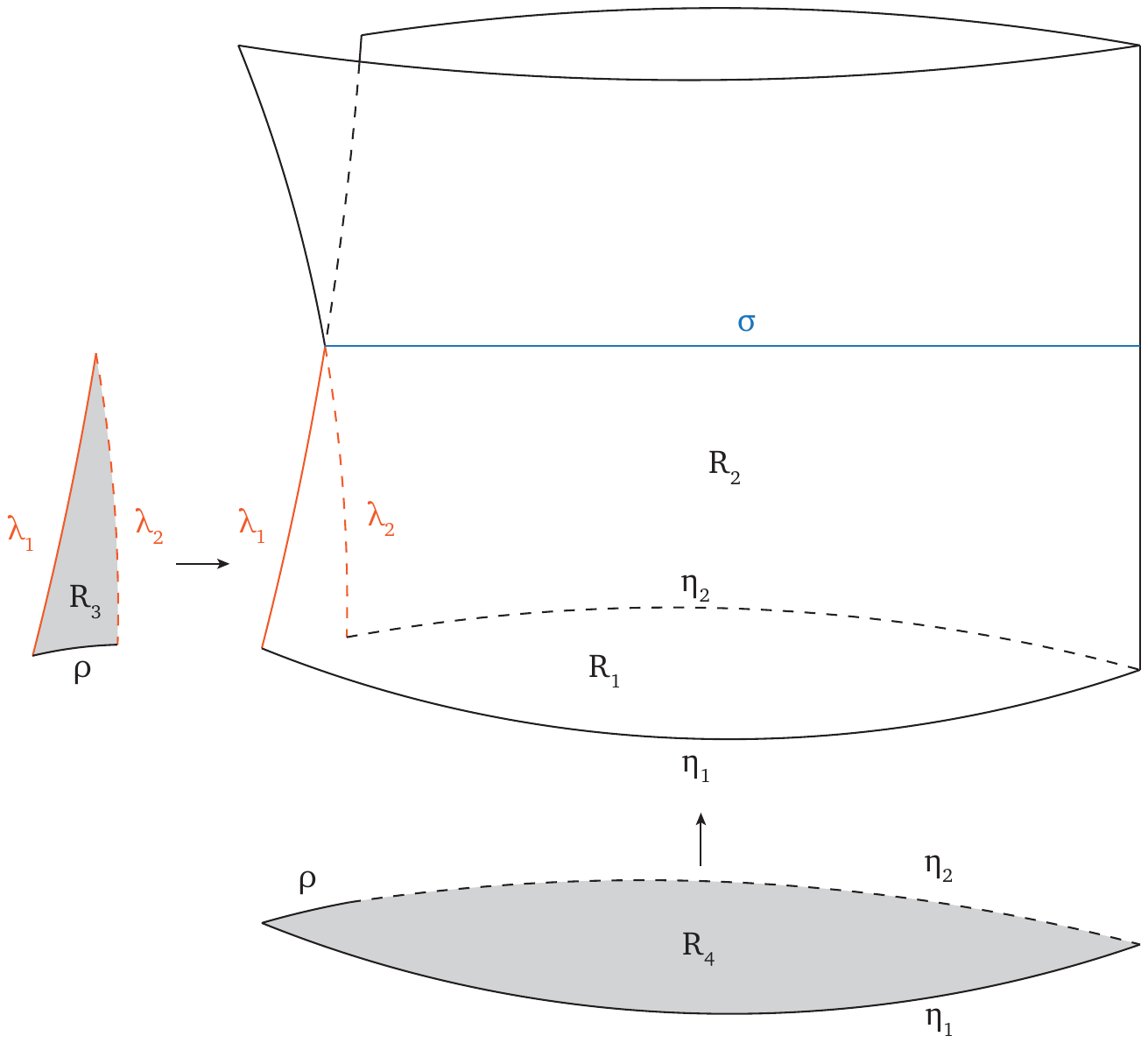}
        \caption{Push $R_1$ to $R_2$ through the ball.}
        \label{fig:case1iIntersectionNew}
    \end{figure}

    In Case $3(iii)$, gluing $R_1$ to $R_2$ along $\gamma$ and $\sigma$ yields an annulus $A'$ with boundary circles $\partial A_0' = \lambda_1 \cup \lambda_2$ and $\partial A_1' = \tau_1 \cup \tau_2$. The only sheet intersected by $A'$ is $\Sigma_2$, so by Lemma \ref{lemma:splitting_annulus}, $A'$ is compressible or $\partial$-compressible.

Note $\partial$-compressions for $A'$ yield $\partial$-compressions for $A$, so $A'$ is compressible. We proceed with a similar argument as in Lemma \ref{lemma:annulus_double_back}. Since $D^{2n+2}(C)$ is $\partial$-irreducible, there are compression disks $S_0$, $S_1 \subset \partial M$ with boundary $\partial A_0'$ and $\partial A_1'$ respectively. Since $\partial A_0' \cap \partial A_1' = \emptyset$, $S_0$ and $S_1$ are disjoint or concentric. If $S_0$ and $S_1$ are disjoint, $A' \cup S_0 \cup S_1$ forms a sphere which bounds a ball. We may isotope $R_1$ to $R_2$ while keeping $\lambda_1 \cup \lambda_2$ on $S_0$ and $\tau_1 \cup \tau_2$ on $S_1$, eliminating the intersection via an extendable isotopy. If $S_0 \subset S_1$, the boundary of a regular neighborhood of $(S_1 \cup A')$ forms a sphere which bounds a ball. We may isotope $R_1$ to $R_2$ through this ball, keeping the various arcs embedded on $S_1$ and $S_2$. See Figure \ref{fig:case3iiiIntersectionsNew}. 

\begin{figure}[htbp]
    \centering
    \includegraphics[width=0.4\linewidth]{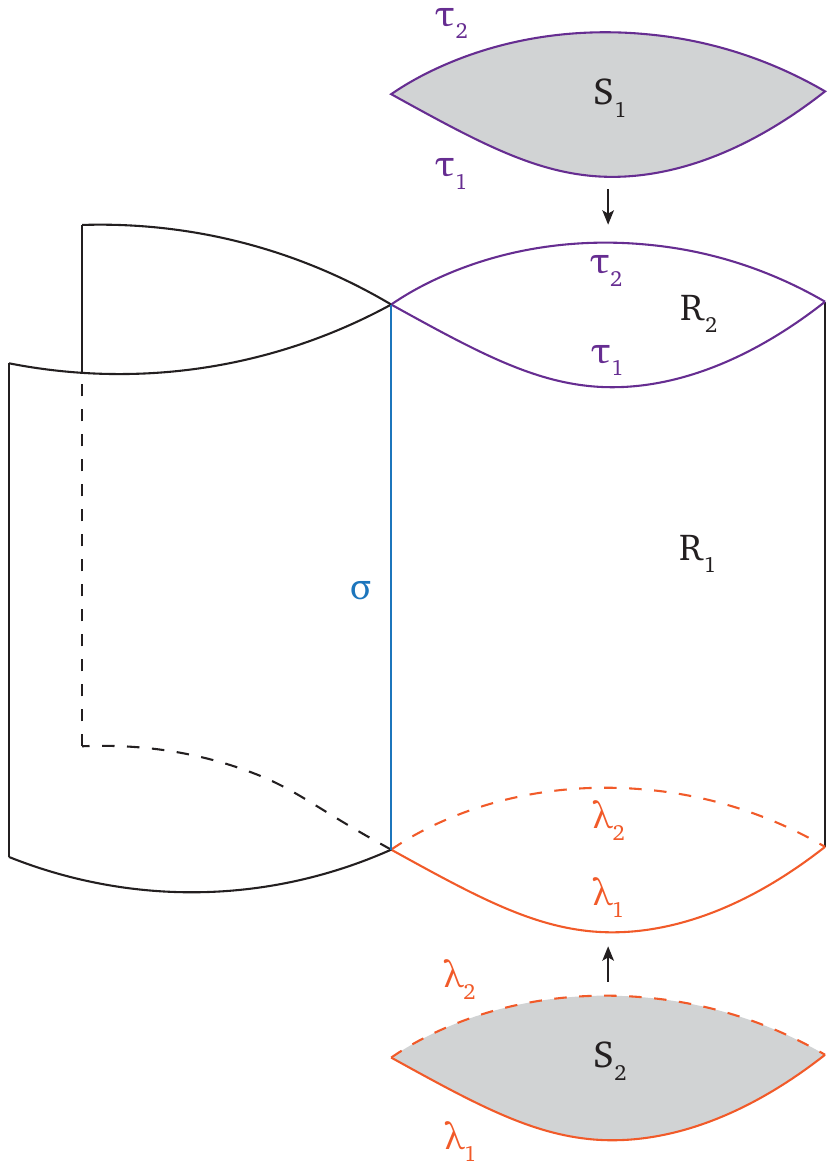}
    \caption{Compressions for $A'$ induce spheres which bound balls.}
    \label{fig:case3iiiIntersectionsNew}
\end{figure}

In Case $3(iv)$, gluing $R_1$ to $R_2$ along $\gamma$ and $\sigma$ yields a properly embedded M\"obius band $S$ in a piece, with boundary circle $\tau_1 \cup \tau_2 \cup \lambda_1 \cup \lambda_2$. 
   The boundary of a regular neighborhood of $S$ is an annulus $S'$, with both boundaries on $\partial M$ parallel to $\partial S$. A boundary-compression for $S'$ would imply a boundary-compression for $A$. Thus, by Lemma \ref{lemma:splitting_annulus}, $S'$ is compressible. But, by $\partial$-irreducibility of $D^{2n+2}(C)$, there is a disk $D$ on $\partial M$ such that its boundary is a boundary of $S'$. We can extend that disk just slightly to $\partial S$, and now $S \cup D$ is a projective plane embedded in the manifold, which cannot occur as our manifold exists in $\mathbb{R}^3$. Thus Case $3(iv)$ intersections cannot occur.
\end{proof}

We are now ready to prove there are no essential Type I annuli when $G$ is a collection of disjoint nontrivial arcs running from one boundary to the other.

\begin{proof}[Proof of Lemma \ref{lemma:arcAnnulusInessential}]
We have shown that we can isotope $D_1$ off of $D_2^R$ in $P_1$ with the exception that they still share $\gamma$. We now show that we can isotope $D_1$ to $D_2^R$ via an extendable isotopy. 

   Let $D' = D_1 \cup D_2^R$, a disk in $P_1$ with boundary in $\partial P_1 \setminus \partial N(L)$. Isotope it just slightly so it no longer intersects $\Sigma_2$ along $\gamma$.  Note that two arcs of its boundary lie on $\Sigma_1$. Doubling it across $P_i \cap \Sigma_1$ yields a Type I  annulus $A^*$ in $P_1 \cup P_1^R$, in $D^{2n+2}(C)$, as in Figure \ref{fig:Isotope D1 to D2R}. Since it does not intersect $\Sigma_2$,  Lemma \ref{annulusmissingsheet} implies it cannot be essential. 

   \begin{figure}[htbp]
    \centering
    \includegraphics[width=0.4\linewidth]{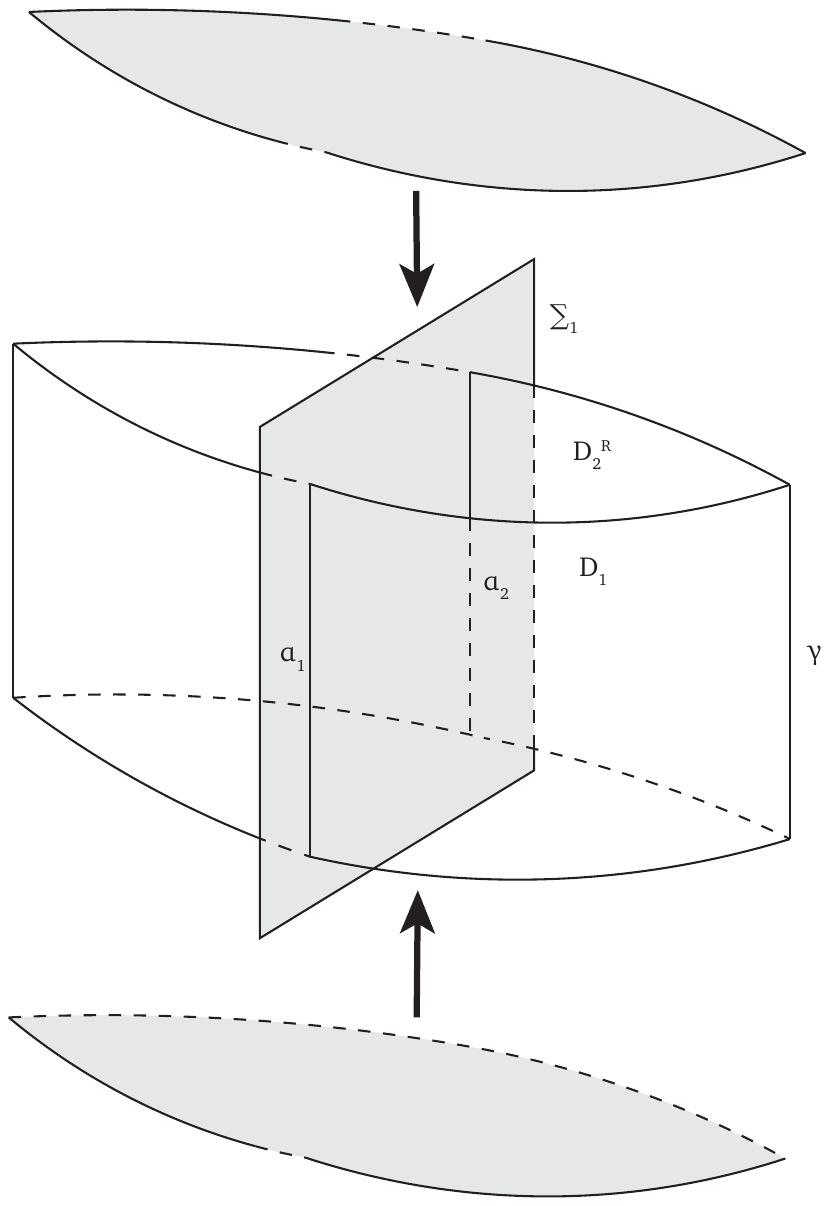}
    \caption{Once all intersections other than $\gamma$ have been eliminated, we reflect $D_1 \cup D_2^R$ to obtain an annulus that can be used to show $D_1$ can be isotoped to $D_2^R$.}
    \label{fig:Isotope D1 to D2R}
\end{figure}

If $A^*$ is boundary parallel, this would imply the same for $A$, a contradiction. Thus $A^*$ compresses along a disk $D^*$, and the union of $D^*$ with either of the two annuli it cuts $A^*$ into is a disk with boundary in $\partial H_{2n+2, C}$, which is $\partial$-irreducible and irreducible by Lemmas \ref{lemma:EssentialDisks} and \ref{lemma:EssentialSphere}. Hence each resulting disk has boundary that bounds a disk on $\partial H_{2n+2, C}$ and the resulting spheres bound balls in $D^{2n+2}(C).$ Gluing the balls together along $D^*$ yields a ball cut from $D^{2n+2}(C)$ by $A^*$. We can then use that ball to isotope $D_1$ to $D_2^R$ via an extendable isotopy, carrying along the rest of the original annulus $A$.

Since we have shown adjacent disks are extendably isotopic,  we may isotope $A$ so that as we travel cyclically along the pieces, $A \cap P_i$ alternates between $D_1$ and $D_1^{R}$.  Consider the annulus $A'$ formed by gluing alternating copies of $\phi(D_1)$ and $\phi(D_1^{R})$ in $D^{2n}(C)$. Note that since $D^{2n}(C)$ is hyperbolic, $A'$ is either compressible or $\partial$-compressible. 

Suppose $A'$ has a $\partial$-compression disk $D$. Recall $\partial D = \alpha \cup \beta$ with $\alpha \subset A'$ and $\beta \subset D^{2n}(C)$. We may isotope $D$ such that $\alpha$ lies in one piece. Following the argument outlined in Lemma \ref{lemma:splitting_bdy_incomp}, we may isotope $D$ such that it lies entirely in a piece. Then $\phi^{-1}(D)$ forms a $\partial$-compression disk for $A$, a contradiction.

Suppose $A'$ is compressible in $D^{2n}(C)$, with compression disk $D$. .
Then,  $\partial D$ splits $A'$ into two annuli. The union of $D$ with each annulus is a properly embedded disk with boundary in $\partial H_{2n, C}$, which itself is $\partial$-irreducible and irreducible. Hence, each resulting disk has boundary that bounds a disk on $\partial H_{2n, C}$ and the resulting spheres bound balls in $D^{2n}(C)$. Gluing the balls together along $D$ yields a ball cut from $D^{2n}(C)$ by $A'$. But $E$ intersects this ball in one of its two interval components, and any given $P_i$ intersects this ball in a sub-ball. Hence, the sub-ball exists in $C$ and reflecting it around when we create $D^{2n+2}(C)$ and $A$ within it shows that $A$ cuts a ball from $D^{2n+2}(C)$, a contradiction to $A$ being essential there. 
\end{proof}

\begin{lemma}\label{notypeI}
   There are no essential Type I annuli in $D^{2n+2}(C)$. 
\end{lemma}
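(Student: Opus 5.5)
This lemma is essentially a corollary of the preceding ones: I would argue by contradiction and reduce everything to the case analysis already carried out. Suppose $D^{2n+2}(C)$ contains an essential Type I annulus. Among all such annuli, choose $A$ of minimal complexity in its type in the sense of Definition \ref{minimalsurf}. Such a minimum exists because the complexity pair takes values in $\mathbb{N}\times\mathbb{N}$ under the lexicographic order, which is a well-ordering.

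There is one subtlety to handle first. The Graph Lemma (Lemma \ref{lemma:GraphLemma}) is stated for surfaces of minimal complexity in their isotopy class, whereas Lemmas \ref{lemma:annulus_graph_lemma}--\ref{lemma:arcAnnulusInessential} speak of minimal complexity essential annuli. I would observe that an annulus that is minimal in its type is automatically minimal in its isotopy class, since any annulus isotopic to $A$ is again an essential Type I annulus. So all of those lemmas apply to $A$.

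The argument then proceeds in order.
\begin{enumerate}
\item If $A\cap\mathbf{S'}=\emptyset$, then $A$ misses every sheet, and Lemma \ref{annulusmissingsheet} says $A$ is not essential, which is a contradiction.
\item Otherwise, Lemma \ref{lemma:annulus_graph_lemma} says the graph $G=\mathbf{S'}\cap A$ has one of two forms. Either it is a family of concentric essential simple closed curves, or it is a family of disjoint arcs, each running from $\partial A_0$ to $\partial A_1$.
\item The first form is excluded by Lemma \ref{lemma:simpleClosedCurveAnnulusInessential}.
\item The second form is excluded by Lemma \ref{lemma:arcAnnulusInessential}.
\end{enumerate}
Since every case leads to a contradiction, no essential Type I annulus exists.

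The only real obstacle is checking that the dichotomy in Lemma \ref{lemma:annulus_graph_lemma} is genuinely exhaustive. Two kinds of intersection components need care.

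The first kind is simple closed curves in $G$ that are trivial on $A$. An innermost such curve bounds a disk on $A$, which gives an interior polygonal region. The Graph Lemma rules this out.

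The second kind is mixed configurations, where some components of $G$ are essential circles and others are spanning arcs. These cannot occur together: a spanning arc meets every essential circle on $A$, so the two would cross, and any such crossing happens at a point of $E$. A vertex of $G$ on $E$ in the interior of $A$ has valence $2n+2$. A component containing interior vertices must then bound an interior or exterior polygonal region, and the Graph Lemma rules that out.

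Once these two points are settled, the four steps above cover every case, and the lemma follows.
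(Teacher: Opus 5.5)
Your proposal is correct and follows the paper's route exactly: the paper's proof simply cites Lemmas \ref{lemma:splitting_annulus}, \ref{lemma:annulus_graph_lemma}, \ref{lemma:simpleClosedCurveAnnulusInessential} and \ref{lemma:arcAnnulusInessential}, in the same order you use them. Your extra remarks are valid clarifications: minimal complexity in its type implies minimal complexity in its isotopy class, so the Graph Lemma applies. Your exhaustiveness check of the dichotomy only re-derives what Lemma \ref{lemma:annulus_graph_lemma} already asserts.
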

\begin{proof} This follows from Lemmas \ref{lemma:splitting_annulus}, \ref{lemma:annulus_graph_lemma}, \ref{lemma:simpleClosedCurveAnnulusInessential}, and \ref{lemma:arcAnnulusInessential}. 
\end{proof} 

\begin{lemma}
   There are no essential Type II annuli in $D^{2n+2}(C)$. 
\end{lemma}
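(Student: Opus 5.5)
The plan is the standard reduction of Type II annuli to Type I annuli, as announced at the start of this subsection. Suppose $A$ is an essential Type II annulus in $D^{2n+2}(C)$, with $\partial A_0 \subset \partial M$ and $\partial A_1 \subset \partial N(K)$ for some component $K$ of $L'$. Take a regular neighborhood of $A \cup \partial N(K)$ and let $A'$ be the frontier component that is not parallel to $\partial N(K)$. This is the annulus obtained by running two parallel copies of $A$ along either side of $K$, joined by the torus $\partial N(K)$ cut along $\partial A_1$. It is properly embedded, with both boundary curves on $\partial M$ parallel to $\partial A_0$, so it is a Type I annulus. By Lemma \ref{notypeI}, $A'$ is compressible or $\partial$-compressible. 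By Remark \ref{annuliinmanifolds} together with Lemmas \ref{lemma:EssentialSphere} and \ref{lemma:EssentialDisks}, this means $A'$ is either compressible or $\partial$-parallel.

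First I will treat the compressible case. Compressing $A'$ along a disk $D$ gives two properly embedded disks with boundary on $\partial M$. By $\partial$-irreducibility and irreducibility, each cuts off a ball, and gluing these balls along $D$ shows that $A'$ cuts a ball $B$ from $D^{2n+2}(C)$. Either $B$ contains $N(K)\cup A$ or it does not. If it does, then $K$ lies in a ball in $H_{2n+2,C}$ bounded by a disk-with-boundary-on-$\partial M$ union $A'$. Since $A$ runs from $K$ to $\partial M$, $\partial A_1$ is a curve on $\partial N(K)$ whose parallel copy on $\partial M$ bounds a disk there. If $\partial A_1$ is a meridian, we get a sphere meeting $K$ once, which is impossible. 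If it is not a meridian, $K$ sits in a ball in a way that yields a compression or $\partial$-compression of $A$ directly, contradicting essentiality. If $B$ does not contain $K$, then the side of $A'$ away from $B$ contains $N(K)\cup A$. In that case a compressing disk for $A'$ on the $B$ side can be traded, via the ball, for a disk meeting $A$ in an arc that $\partial$-compresses $A$.

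Next I will treat the $\partial$-parallel case. If $A'$ is parallel into $\partial M$, it cobounds a solid torus $V$ with an annulus on $\partial M$. If $K \subset V$, then $V$ contains $N(K)$ and $A$, and $A$ splits $V\setminus N(K)$. One then checks that $K$ is a core of $V$ (else we get an essential torus or a compression), so $A$ is $\partial$-parallel through $V$, a contradiction. If $K \not\subset V$, then $A$ lies outside $V$, and the parallelism gives a $\partial$-compressing disk for $A$ by taking a product arc in $V$ and extending it across the thin region between $A'$ and $A\cup\partial N(K)$.

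I expect the main obstacle to be the bookkeeping in the case where $A'$ bounds a ball or solid torus on the side containing $K$. There the argument must use the slope of $\partial A_1$ on $\partial N(K)$. For instance, if $\partial A_1$ is a meridian, the structure forces a sphere punctured once by $L'$, which is impossible in $S^2\times I$; otherwise $K$ is parallel to $\partial M$, which contradicts the existence of stakes on both sides, by an argument like that of Lemma \ref{lemma:torus_in_piece}. As a fallback for this slope analysis, I will replace $A$ by a minimal complexity Type II annulus. I will then apply the Graph Lemma (Lemma \ref{lemma:GraphLemma}) as in Lemmas \ref{lemma:annulus_graph_lemma} and \ref{lemma:simpleClosedCurveAnnulusInessential}, and rerun those arguments with one boundary on $N(K)$.
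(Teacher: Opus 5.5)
Your construction of the Type I annulus $A'$ (the frontier of a regular neighborhood of $A\cup\partial N(K)$) and the appeal to Lemma \ref{notypeI} match the paper. The case analysis that follows does not work.

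\textbf{The contradiction you aim for cannot exist.} In the $\partial$-parallel case, and in the second half of the compressible case, you try to produce a $\partial$-compressing disk for $A$. A Type II annulus never has one. An arc of an annulus that does not cut off a disk must run from $\partial A_0$ to $\partial A_1$. The complementary arc of a $\partial$-compressing disk would then have to join $\partial M$ to $\partial N(K)$ inside $\partial D^{2n+2}(C)$, which is impossible because these are different boundary components. For the same reason $A$ can never be $\partial$-parallel, so your conclusion in the subcase $K\subset V$ is not a contradiction you can reach either.

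\textbf{The subcases with $K$ inside $B$ or $V$ are vacuous.} The ball $B$ and the parallelism solid torus $V$ come from irreducibility and $\partial$-parallelism in the link exterior $D^{2n+2}(C)$. Neither can contain the boundary torus $\partial N(K)$. So the slope analysis you flag as the main obstacle, and the minimal-complexity fallback, are not needed.

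\textbf{The missing idea is the paper's global argument.} Let $X\cong T^2\times I$ be the side of $A'$ containing $\partial N(K)$.
\begin{itemize}
\item If $A'$ is compressible, the compressing disk lies on the other side. This is because the core of $A'$ is homotopic in $X$ to $\partial A_1$, which is essential on $\partial N(K)$. The core of $A'$ is also isotopic through $X\setminus A$ to a core curve of $A$. So the compressing disk for $A'$ extends to a compressing disk for $A$, a contradiction.
\item If $A'$ is $\partial$-parallel, the product region $V\cong A'\times I$ cannot be $X$, since $X$ has the extra boundary torus $\partial N(K)$. So $V$ is the entire other side of $A'$, and $D^{2n+2}(C)=X\cup_{A'}V$. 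This manifold has only torus boundary components. That contradicts the fact that $\partial H_{2n+2,C}$ has genus greater than $1$ because of the stakes.
\end{itemize}
This use of the genus of $\partial M$, not a $\partial$-compression of $A$, is what closes the proof.
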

\begin{proof}
Let $A$ be such an annulus with boundary component $\partial A_1$  on $\partial N(K)$ for a  link component $K$ and $\partial A_0$ on $\partial M$.

The boundary of a regular neighborhood of $A \cup K$, yields a Type I annulus $A'$ as shown in Figure \ref{fig:oneBoundaryOnLink}. Note that this works for $\partial A_1$ any nontrivial $(p,q)$-curve on $\partial N(K)$.  Note also that $A'$ must be either compressible or $\partial$-compressible by Lemma \ref{notypeI}. If $A'$ is compressible, then $A$ must also be compressible. Thus, $A'$ has a $\partial$-compression disk $D$. Since the interior of the regular neighborhood contains $K$, $D$ must lie to the outside of the regular neighborhood. By Remark \ref{annuliinmanifolds}, we know that $A'$ is boundary-parallel and thus, cuts a solid torus from $D^{2n+2}(C)$. However, $A'$ also cuts a solid torus minus $K$ from its other side corresponding to the regular neighborhood of $A \cup K$. Hence, $D^{2n+2}(C)$ consists of the union of these two solid tori (one missing its core curve $K$) glued along essential annuli in their boundaries. But such a manifold has only torus boundaries, contradicting the fact $\partial H_{2n+2,C}$ has genus greater than 1. 

\end{proof}

\begin{figure}[htbp]
    \centering
    \includegraphics[scale=0.4]{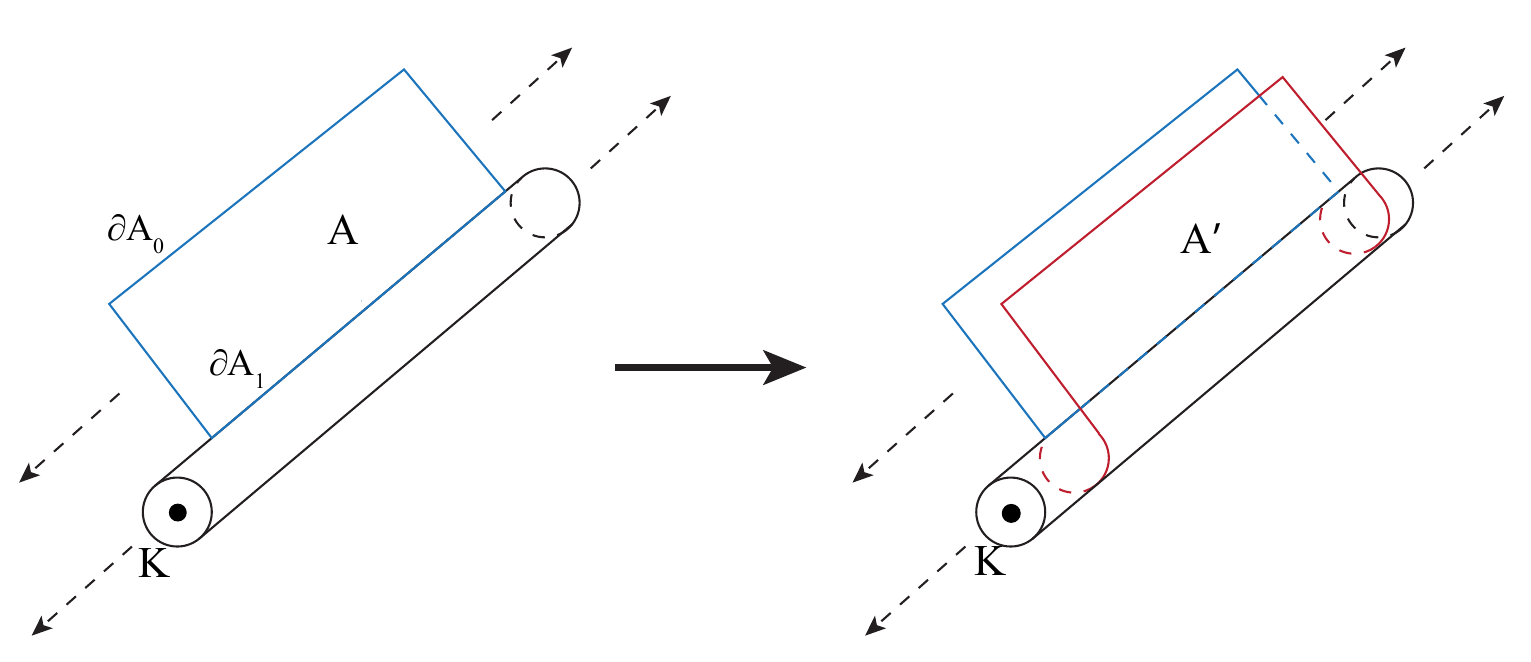}
    \caption{A Type II annulus generates a Type I annulus.}
    \label{fig:oneBoundaryOnLink}
\end{figure}

\begin{lemma}
   There are no Type III essential  annuli in $D^{2n+2}(C)$. 
\end{lemma}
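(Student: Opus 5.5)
The plan follows the outline of this subsection: an essential Type III annulus (both boundary circles on $\partial N(L')$) can only occur when the manifold is Seifert fibered, and $D^{2n+2}(C)$ is not Seifert fibered. By Lemmas \ref{lemma:EssentialSphere}, \ref{lemma:EssentialDisks} and \ref{lemma:EssentialTorus}, $D^{2n+2}(C)$ is irreducible, $\partial$-irreducible and atoroidal. Together with the absence of essential Type I and Type II annuli, the only possible essential annuli have both boundaries on torus components $\partial N(K)$, $\partial N(K')$ (possibly $K=K'$).

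I would argue via the characteristic submanifold (JSJ/annulus theorem). Suppose $A$ is an essential Type III annulus. Consider a regular neighborhood $N$ of $A\cup \partial N(K)\cup\partial N(K')$. Its frontier consists of tori, or of annuli when we work relative to the boundary. Each frontier torus must be compressible or boundary parallel, by atoroidality.

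The compressible case is handled with irreducibility. Compressing a frontier torus gives a sphere, which bounds a ball. Hence the torus bounds a solid torus or lies in a ball. If it lies in a ball, the link components $K,K'$ would sit in a ball disjoint from $\partial M$, and $A$ would be inessential or give a split/composite structure. That contradicts Lemma \ref{primelink} and the irreducibility already established.

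In the boundary-parallel case, the torus is parallel to $\partial N(K'')$ for some link component, since the genus of $\partial M$ exceeds 1. Either way, the manifold is then a union of $N$ with solid tori and $T^2\times I$ pieces glued along tori. That union is a Seifert fibered space (the standard conclusion that an atoroidal irreducible manifold with an essential annulus between torus boundaries is Seifert fibered, cf.\ \cite{hatcher3manifold}). All of its boundary components are then tori. This contradicts the fact that $\partial H_{2n+2,C}$ is a nonempty higher-genus boundary component, since each charm contains a stake.

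So the key step is showing that the existence of $A$ forces the whole of $D^{2n+2}(C)$ to be built from the pieces above, with nothing meeting $\partial M$. The main obstacle I expect is exactly this: the region on the far side of the frontier tori must be a solid torus or collar, not something containing $\partial M$. To handle it, I would use that $\partial M$ lies on one side of each frontier torus. If that side is not a collar or solid torus, then the torus is incompressible on that side (by $\partial$-irreducibility and Lemma \ref{lemma:EssentialDisks}) and not parallel to $\partial M$ (compare Lemma \ref{lemma:non_parallel_torus}). It would therefore be essential, contradicting Lemma \ref{lemma:EssentialTorus}.

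As a fallback, I would instead tube $A$ to $\partial M$ along an arc, producing a Type II annulus, and then invoke the previous lemma. For this, the arc must be chosen so that essentiality is preserved.
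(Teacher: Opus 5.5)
This is essentially the paper's proof. The paper cites Hatcher's lemma that an irreducible, $\partial$-irreducible, atoroidal manifold containing an essential annulus with both boundary circles on torus components is Seifert fibered, and then uses that $\partial H_{2n+2,C}$ has genus greater than one; your appeal to that same standard fact is what carries your argument too. Your hand-made unpacking of Hatcher's lemma has two slips. In the ball case, $N$ contains the boundary tori $\partial N(K),\partial N(K')$ of $M$, so it cannot lie in a ball; the case is actually excluded because a fiber on that frontier torus would be null-homotopic yet is homotopic to the core of $A$, contradicting incompressibility of $A$. Also, incompressibility of a frontier torus comes from irreducibility (the surgered sphere would have components of $\partial M$ on both sides), not from $\partial$-irreducibility.
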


\begin{proof}
Since $D^{2n+2}(C)$ is irreducible, $\partial$-irreducible and atoroidal, Lemma 1.18 of \cite{hatcher3manifold} implies  
the existence of such an annulus would force $D^{2n+2}(C)$ to be Seifert-fibered. Since $D^{2n+2}(C)$ contains at least four pieces and each piece contains at least one stake, $\partial M$ is not a torus. But Seifert fibered manifolds only have torus boundary components, so $D^{(2n+2)}(C)$ is not Seifert fibered, a contradiction.
\end{proof}

\begin{proof}[Proof of Theorem \ref{thrm:general_replicant}]
By Lemma \ref{lemma:EssentialSphere} and Lemma \ref{lemma:EssentialDisks}, $D^{2n+2}(C)$ is irreducible and $\partial$-irreducible. By Lemma \ref{lemma:EssentialTorus}, $D^{2n+2}(C)$ is atoroidal. By the lemmas in this section,  $D^{2n+2}(C)$ contains no essential annuli of Type I, II or III. Thus, $D^{2n+2}(C)$ is a simple Haken manifold, implying it is hyperbolic and completing the proof of the theorem.
\end{proof}

\subsection{Proof of Theorem}
 We derive the main theorem as a corollary to Theorem \ref{thrm:general_replicant}. 
 
\begin{reptheorem}{cor:2_replicant}
    Let $C$ be a charm. If $D^{2}(C)$ is tg-hyperbolic then $D^{2m}(C)$ is tg-hyperbolic for all $m \geq 1$.
   \end{reptheorem}

\begin{proof}
By Theorem \ref{thrm:general_replicant}, it is enough to show $(P_i, \widehat{\Sigma}_i)$ is an essential pair. Note that for $n=1$, $\widehat{\Sigma}_1$ is the fixed point set of a reflection symmetry of $D^{2}(C)$ and thus totally geodesic. Since $D^{2}(C)$ is hyperbolic it follows $\widehat{\Sigma}_1$ is essential. Thus $(D^{2}(C), \widehat{\Sigma}_1)$ is an essential pair, implying $(P_i, \widehat{\Sigma}_i)$ is also an essential pair. 
\end{proof}

\bibliographystyle{plain}
\bibliography{bib}

\begin{thebibliography}{1}

\bibitem{knotoids}
Colin Adams, Alexandra Bonat, Maya Chande, Joye Chen, Maxwell Jiang, Zachary Romrell, Daniel Santiago, Benjamin Shapiro, and Dora Woodruff.
\newblock Generalised knotoids.
\newblock {\em Math. Proc. Cambridge Phil. Soc.}, 177(1):67--102, 2024.

\bibitem{adams2021lower}
Colin Adams, Michele Capovilla-Searle, Darin Li, Lily~Qiao Li, Jacob McErlean, Alexander Simons, Natalie Stewart, and Xiwen Wang.
\newblock Lower bounds on volumes of hyperbolic 3-manifolds via decomposition.
\newblock 2021.

\bibitem{adams2023hyperbolicity}
Colin Adams and Joye Chen.
\newblock Hyperbolicity of alternating links in thickened surfaces with boundary.
\newblock {\em to appear in Algebraic and Geometric Topology}, 2026.

\bibitem{Adamsproceedings}
Colin Adams, Francisco Gomez-Paz, Jiachen Kang, Lukas Krause, Gregory Li, Reyna Li, Chloe Marple, and Ziwei Tan.
\newblock Hyperbolicity and volume of bongles.
\newblock {\em to appear in Knot Theory and its Applications, Palermo Proceedings}, 2026.

\bibitem{AdamsKang}
Colin Adams and Jiachen Kang.
\newblock Hyperbolicity of non-alternating staked links.
\newblock {\em in preparation}, 2026.

\bibitem{SnapPy}
Marc Culler, Nathan~M. Dunfield, Matthias Goerner, and Jeffrey~R. Weeks.
\newblock Snap{P}y, a computer program for studying the geometry and topology of $3$-manifolds.
\newblock Available at \url{http://snappy.computop.org}.

\bibitem{kauffmanstaked}
J.R. Goldman and Louis Kauffman.
\newblock Knots, tangles, and electrical networks.
\newblock {\em Advances in Applied Mathematics}, 14:267–306, 09 1993.

\bibitem{hatcher3manifold}
Allen Hatcher.
\newblock {\em Notes on Basic 3 Manifold Topology}.
\newblock \url{https://pi.math.cornell.edu/~hatcher/3M/3Mfds.pdf}, 2023.

\end{thebibliography}

\end{document}